\newcommand{\ml}{\mathcal{L}}
\newtheorem{theorem}{Theorem}[section]
\newtheorem{corollary}[theorem]{Corollary}
\newtheorem{lemma}[theorem]{Lemma}
\newtheorem{proposition}[theorem]{Proposition}
\newtheorem{remark}[theorem]{Remark}
\numberwithin{equation}{section}
\begin{document}
\title{A nonlocal Gray-Scott model:\\ well-posedness and diffusive limit} 
\thanks{}
%
\author{Philippe Lauren\c{c}ot}
\address{Laboratoire de Math\'ematiques (LAMA) UMR~5127, Universit\'e Savoie Mont Blanc, CNRS\\	F--73000 Chamb\'ery, France}
\email{philippe.laurencot@univ-smb.fr}
\author{Christoph Walker}
\address{Leibniz Universit\"at Hannover\\ Institut f\" ur Angewandte Mathematik \\ Welfengarten 1 \\ D--30167 Hannover\\ Germany}
\email{walker@ifam.uni-hannover.de}
\keywords{Gray-Scott model - nonlocal interaction - well-posedness - stabilization - diffusive limit}
\subjclass{35A01 - 35B40 - 45J05 - 35K57}
\date{\today}

\begin{abstract}
Well-posedness in $L_\infty$ of the nonlocal Gray-Scott model is studied for integrable kernels, along with the stability of the semi-trivial spatially homogeneous steady state. In addition, it is shown that the solutions to the nonlocal Gray-Scott system converge to those to the classical Gray-Scott system in the diffusive limit.
\end{abstract}

\maketitle

%
%
\pagestyle{myheadings}
\markboth{\sc{Ph.~Lauren\c cot \& Ch. Walker}}{\sc{A nonlocal Gray-Scott model}}

\section{Introduction}\label{sec1}

Let $\Omega$ be a bounded domain of $\mathbb{R}^n$, $n\ge 1$, and $(d_1,d_2,f,\kappa)\in (0,\infty)^4$. The Gray-Scott model 
\begin{equation}
	\begin{split}
		\partial_t u & = d_1 \Delta u - u v^2 + f (1-u) \;\;\text{ in }\;\; (0,\infty)\times\Omega\,, \\
		\partial_t v & = d_2 \Delta v + u v^2 - (f + \kappa) v \;\;\text{ in }\;\; (0,\infty)\times\Omega\,, 
	\end{split}\label{GS}
\end{equation} 
is a mathematical model for the autocatalytic chemical reaction $U+2V\to 3V$, where $u$ and~$v$ denote the concentrations of the chemical species $U$ and~$V$, respectively. Despite its simple polynomial mass-action structure, it is well-known to exhibit a rich dynamics, generating a variety of patterns according to the values of the parameters $(d_1,d_2,f,\kappa)$, including spots, stripes, and labyrinthine patterns, see~\cite{Cas2017, ChWa2011, DKZ1997, GZK2018, HPT2000, KWW2006, MoKa2004, Pea1993, PeWa2009} for instance.

A more recent trend is the study of the influence of the diffusion on pattern formation, replacing the standard Laplace operator $(d_1 \Delta,d_2 \Delta)$ by nonlocal operators such as fractional Laplacians~\cite{WSWK2019}
\begin{equation}
	\begin{split}
		\partial_t u & = d_1 (-\Delta)^{\alpha/2} u - u v^2 + f (1-u) \;\;\text{ in }\;\; (0,\infty)\times\Omega\,,  \\
		\partial_t v & = d_2 (-\Delta)^{\alpha/2} v + u v^2 - (f + \kappa) v \;\;\text{ in }\;\; (0,\infty)\times\Omega\,, 
	\end{split}\label{FLGS}
\end{equation}
with $\alpha\in (1,2)$ or convolution operators~\cite{CJW2022}
\begin{equation}
	\begin{split}
		\partial_t u & = d_1 \Gamma_{\gamma_1} u - u v^2 + f (1-u) \;\;\text{ in }\;\; (0,\infty)\times\Omega\,,  \\
		\partial_t v & = d_2 \Gamma_{\gamma_2} v + u v^2- (f + \kappa) v \;\;\text{ in }\;\; (0,\infty)\times\Omega\,,
	\end{split}\label{NLGS}
\end{equation}
where 
\begin{equation}
	\Gamma_{\gamma_\ell} z(x) := \int_\Omega \gamma_\ell(x,y) \big( z(y) - z(x)\big)\,\mathrm{d} y\,, \qquad x\in\Omega\,, \label{nlo}
\end{equation}
and $\gamma_\ell: \Omega\times\Omega\to [0,\infty)$ is a measurable function for $\ell\in\{1,2\}$. In particular, numerical simulations performed in~\cite{WSWK2019} reveal that the patterns are more localized when $\alpha<2$, while topological changes are reported in~\cite{CJW2022, WSWK2019} when the nonlocal operators in~\eqref{FLGS} or in~\eqref{NLGS} deviate sufficiently from standard diffusion. Nevertheless, the observed patterns look similar when the nonlocal operators are close to standard diffusion and one aim of this paper is to show that solutions to~\eqref{NLGS} converge to that to~\eqref{GS} for a suitable choice of the nonlocal operators $(\Gamma_{\gamma_1},\Gamma_{\gamma_2})$, see Theorem~\ref{thm1.3} below. As a preliminary step, we first establish the global well-posedness of the initial value problem associated with~\eqref{NLGS}; that is,
\begin{subequations}\label{CNLGS}
	\begin{align}
		\partial_t u & = d_1 \Gamma_{\gamma_1} u - u v^2 + f (1-u) \;\;\text{ in }\;\; (0,\infty)\times\Omega\,,  \label{uNLGS} \\
		\partial_t v & = d_2 \Gamma_{\gamma_2} v + u v^2- (f + \kappa) v \;\;\text{ in }\;\; (0,\infty)\times\Omega\,, \label{vNLGS} \\
		(u,v)(0) & = \big( u^0,v^0) \;\;\text{ in }\;\; \Omega\,,  \label{iNLGS}
	\end{align}
\end{subequations} 
the nonlocal operators being defined in~\eqref{nlo}. Throughout this paper, we set
\begin{equation*}
	X := L_\infty(\Omega)\,, \qquad X^+ := \{ z \in X\ :\ z \ge 0 \;\text{ a.e. in }\;\Omega\}\,,
\end{equation*}
and we denote the norm on $L_\infty(\Omega)$ by $\|\cdot\|_\infty = \|\cdot\|_{L_\infty(\Omega)}$.
 
\begin{theorem}[Well-posedness]\label{thm1.1}
	Assume that $\gamma_\ell: \Omega\times\Omega\to [0,\infty)$ is a measurable function satisfying 
	\begin{equation}
		\int_{\Omega} \gamma_\ell(y,x)\,\mathrm{d} y = \int_{\Omega} \gamma_\ell(x,y)\,\mathrm{d} y \le  \gamma_\infty<\infty\,, \qquad x\in\Omega\,, \label{gammaN}
	\end{equation}
	for $\ell\in\{1,2\}$ and some $\gamma_\infty\ge 1$ and consider $(u^0,v^0)\in X^+\times X^+$. Then there is a unique non-negative global solution 
	\begin{equation*}
		(u,v)\in C^1\big([0,\infty),X^+\times X^+\big)
	\end{equation*}
	to~\eqref{CNLGS} which is bounded; that is, 
	\begin{equation*}
		\sup_{t\ge 0} \big\{ \|u(t)\|_{\infty} + \|v(t)\|_{\infty} \big\} < \infty\,.
	\end{equation*}
	In fact, the mapping $(u^0,v^0)\mapsto (u,v)$ defines a global semiflow on $X^+\times X^+$.
	
	Assume further that $\gamma_\ell\in C(\bar{\Omega}\times \bar{\Omega})$ for $\ell\in\{1,2\}$. Then the same result is true when replacing $X$ by~$C(\bar{\Omega})$.
\end{theorem}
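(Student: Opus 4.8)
The plan is to capitalise on the fact that, under assumption~\eqref{gammaN}, each operator $\Gamma_{\gamma_\ell}$ is a \emph{bounded} linear operator on $X=L_\infty(\Omega)$. Writing $\Gamma_{\gamma_\ell} = K_\ell - a_\ell$ with $K_\ell z(x) := \int_\Omega \gamma_\ell(x,y) z(y)\,\mathrm{d}y$ and $a_\ell(x) := \int_\Omega \gamma_\ell(x,y)\,\mathrm{d}y \le \gamma_\infty$, one has $\|\Gamma_{\gamma_\ell} z\|_\infty \le 2\gamma_\infty \|z\|_\infty$, so that~\eqref{CNLGS} is an ordinary differential equation $\partial_t(u,v) = F(u,v)$ in $X\times X$ whose right-hand side $F$ is polynomial, hence locally Lipschitz on bounded sets. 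First I would invoke the Picard--Lindel\"of theorem to obtain a unique maximal solution $(u,v)\in C^1\big([0,T_{\max}),X\times X\big)$, thereby reducing the theorem to (i) propagation of non-negativity, (ii) a \emph{time-uniform} a priori bound, and (iii) continuous dependence on $(u^0,v^0)$. For non-negativity I would shift the multiplication operators into the principal part, so that $u$ solves $\partial_t u + (d_1 a_1 + v^2 + f) u = d_1 K_1 u + f$ and $v$ solves $\partial_t v + (d_2 a_2 + f + \kappa) v = d_2 K_2 v + u v^2$; since $K_\ell\ge0$ and the multiplication semigroups are positivity preserving, the variation-of-constants map underlying the local construction has a non-negative right-hand side on $X^+\times X^+$, leaves this cone invariant, and by uniqueness its fixed point is $(u,v)$.

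The heart of the proof is the derivation of uniform $L_\infty$-bounds, and this is the step I expect to be the main obstacle, since the source $uv^2$ in~\eqref{vNLGS} is superlinear while the bounded operators $\Gamma_{\gamma_\ell}$ provide no regularisation with which to absorb it. I would first bound $u$: testing~\eqref{uNLGS} against $p\,(u-M_u)_+^{p-1}$ with $M_u := \max\{1,\|u^0\|_\infty\}$, discarding $-uv^2\le0$, using $f(1-u)\le -f(u-M_u)$ where $u>M_u$, and invoking the inequality
\begin{equation*}
	\int_\Omega (u-M_u)_+^{p-1}\, K_1 (u-M_u)_+\,\mathrm{d}x \le \int_\Omega a_1\,(u-M_u)_+^{p}\,\mathrm{d}x\,,
\end{equation*}
which is Young's inequality combined with the symmetry $\int_\Omega\gamma_1(x,y)\,\mathrm{d}x = \int_\Omega\gamma_1(x,y)\,\mathrm{d}y=a_1$ supplied by~\eqref{gammaN}, one finds $\frac{\mathrm d}{\mathrm dt}\|(u-M_u)_+\|_{L_p(\Omega)}^p\le0$, hence $0\le u\le M_u$ for all times. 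The same symmetry yields $\int_\Omega \Gamma_{\gamma_\ell} z\,\mathrm{d}x=0$ and the nonlocal dissipation $\int_\Omega z^{p-1}\Gamma_{\gamma_\ell} z\,\mathrm{d}x\le0$ for $z\in X^+$, both of which are used below.

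To control $v$ I would pass to $z:=u+v\ge v$, whose reaction terms enjoy the Gray--Scott cancellation $\partial_t z = d_1\Gamma_{\gamma_1} u + d_2\Gamma_{\gamma_2} v + f - fu - (f+\kappa)v$. The crucial manoeuvre is the splitting $d_1\Gamma_{\gamma_1} u + d_2\Gamma_{\gamma_2} v = d_2\Gamma_{\gamma_2} z + (d_1\Gamma_{\gamma_1}-d_2\Gamma_{\gamma_2})u$, which isolates a single \emph{dissipative} operator acting on $z$ together with a remainder that, by the bound $0\le u\le M_u$ and $\|\Gamma_{\gamma_\ell}\|\le 2\gamma_\infty$, is bounded in $L_\infty$ by $C_1:=2\gamma_\infty(d_1+d_2)M_u$. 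Testing against $p\,z^{p-1}$ and using $-fu-(f+\kappa)v\le -fz$ then gives
\begin{equation*}
	\frac{1}{p}\frac{\mathrm d}{\mathrm dt}\|z\|_{L_p(\Omega)}^p \le (C_1+f)\int_\Omega z^{p-1}\,\mathrm{d}x - f\int_\Omega z^{p}\,\mathrm{d}x\,,
\end{equation*}
whence $\frac{\mathrm d}{\mathrm dt}\|z\|_{L_p(\Omega)} \le (C_1+f)|\Omega|^{1/p}-f\|z\|_{L_p(\Omega)}$ and, letting $p\to\infty$, the uniform bound $\|z(t)\|_\infty\le\max\{\|u^0+v^0\|_\infty,(C_1+f)/f\}$. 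Since $\|v\|_\infty\le\|z\|_\infty$, this precludes blow-up, forces $T_{\max}=\infty$, and delivers the asserted boundedness.

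Continuous dependence, and hence the semiflow property, would follow from a Gr\"onwall estimate on the difference of two solutions, exploiting the local Lipschitz continuity of $F$ on the uniform bounded set in which all solutions now live. For the final statement, when $\gamma_\ell\in C(\bar\Omega\times\bar\Omega)$ the operators $K_\ell$ map $C(\bar\Omega)$ into itself (by dominated convergence and uniform continuity of $\gamma_\ell$) and $a_\ell\in C(\bar\Omega)$, so $\Gamma_{\gamma_\ell}$ is bounded on $C(\bar\Omega)$; as $C(\bar\Omega)$ is a closed subspace of $X$ with cone $C(\bar\Omega)\cap X^+$, and the above estimates involve only $L_p$--$L_\infty$ norms available there, the entire argument transfers without change.
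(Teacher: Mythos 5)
Your proof is correct, and its skeleton coincides with the paper's: you treat \eqref{CNLGS} as a semilinear ODE in $X^2$ with bounded linear part (local existence plus the blow-up alternative), propagate the cone $X^+\times X^+$ by an absorption trick, bound $u$ by comparison with a constant, and---most importantly---use exactly the paper's splitting $d_1\Gamma_{\gamma_1}u+d_2\Gamma_{\gamma_2}v=d_2\Gamma_{\gamma_2}(u+v)+(d_1\Gamma_{\gamma_1}-d_2\Gamma_{\gamma_2})u$, which isolates a single dissipative operator acting on $u+v$ together with a remainder controlled in $L_\infty$ via $\|\Gamma_{\gamma_\ell}\|_{\mathcal{L}(X)}\le 2\gamma_\infty$. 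Where you genuinely differ is the engine behind the $L_\infty$ bounds. The paper proves once and for all (Lemma~\ref{lem2.1}) that $e^{t\Gamma_\gamma}$ is a positive contraction semigroup with $e^{t\Gamma_\gamma}1=1$---its proof is an $L_1$ estimate of $(z-M)_+$ that invokes \eqref{gammaN} in exactly the place where your Young-inequality computation does---and then applies Duhamel's formula to $u-1$ and to $u+v$. You instead prove the dissipation inequality $\int_\Omega g^{p-1}K_\ell g\,\mathrm{d}x\le\int_\Omega a_\ell\,g^p\,\mathrm{d}x$ directly on the solution and let $p\to\infty$. Your route is more elementary (no semigroup positivity theory, no variation of constants for the a priori bounds) but yields cruder constants and obliges you to justify differentiating $t\mapsto\|\cdot\|_{L_p}^p$ along the flow and the passage $p\to\infty$; the paper's route produces the sharper decaying bound $u\le 1+e^{-ft}\big(\|u^0\|_\infty-1\big)_+$ of Corollary~\ref{cor2.3}, which is reused in the proof of Theorem~\ref{thm1.2}, and its Lemma~\ref{lem2.1} is recycled in Sections~\ref{sec3} and~\ref{sec5}. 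Two small points you should spell out, neither of which is a gap: (i) non-negativity of $v$ requires $u\ge 0$ first, since the source $uv^2$ has the sign of $u$; your absorption argument for $u$ works irrespective of the sign of $v$ (because $v^2\ge 0$), so establish positivity of $u$ before $v$, or run the fixed-point iteration in the cone with $uv$ frozen as a coefficient; (ii) when testing \eqref{uNLGS} against $p(u-M_u)_+^{p-1}$, you implicitly use that $\Gamma_{\gamma_1}$ annihilates constants and that $K_1(u-M_u)\le K_1(u-M_u)_+$ by positivity of $K_1$, in order to reduce matters to your displayed inequality.
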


A similar result is available for the original Gray-Scott model~\eqref{GS} supplemented with homogeneous Neumann or Dirichlet boundary conditions and follows from the general theory developed in~\cite[Theorems~1-2]{HMP1987} for reaction-diffusion systems with mass balance and polynomial nonlinearities.

Theorem~\ref{thm1.1} extends~\cite[Theorem~1]{CJW2022} which is restricted to the one-dimensional case $n=1$ and only provides local well-posedness in $H^1(\Omega)\times H^1(\Omega)$. Instead of using a Galerkin approximation as in~\cite{CJW2022}, we exploit here the semilinear structure of~\eqref{CNLGS}, along with the boundedness of the nonlocal operators in~$X^2$ and the local Lipschitz continuity of the nonlinear reaction terms, still in~$X^2$, see Section~\ref{sec2}.  Notice also that the equality in condition~\eqref{gammaN} is obviously satisfied when $\gamma_\ell$ is symmetric.

\begin{remark}\label{rem1.4}
	Unlike~\eqref{GS}, there is no need to supplement~\eqref{CNLGS} with boundary conditions but the choice of the nonlocal operator~\eqref{nlo} actually includes a nonlocal version of homogeneous Neumann boundary conditions. While we refer for instance to~\cite{AVMRTM2010} for a more complete discussion on nonlocal operators and boundary conditions, we point out here that Theorem~\ref{thm1.1} is also valid for nonlocal operators corresponding to homogeneous Dirichlet boundary conditions, as shown in Section~\ref{sec5}.
\end{remark}

We next turn to the large time behaviour of solutions to~\eqref{CNLGS} and first recall that the spatially homogeneous steady states of~\eqref{NLGS} are well identified:
\begin{itemize}
	\item [(s1)] if $f<4(f+\kappa)^2$, then $(1,0)$ is the unique spatially homogeneous steady state of~\eqref{NLGS} and it is linearly stable;
	\item [(s2)] if $f= 4(f+\kappa)^2$, then~\eqref{NLGS} admits two spatially homogeneous steady states $(1,0)$ and \mbox{$(1/2,2(f+\kappa))$};
	\item[(s3)] if $f > 4(f+\kappa)^2$, then~\eqref{NLGS} admits three spatially homogeneous steady states $(1,0)$, $(u_+,v_+)$, and $(u_-,v_-)$ given by
	\begin{equation}\label{steadystate}
u_\pm := \frac{2(f+\kappa)^2}{f\pm \sqrt{f^2-4f(f+\kappa)^2}}\,,\qquad	v_\pm := \frac{f\pm \sqrt{f^2-4f(f+\kappa)^2}}{2(f+\kappa)} \,.
	\end{equation}
\end{itemize} 
We refer to~\cite{MGR2004} for additional information on these steady states in the absence of diffusion and to~\cite{Cas2017, ChWa2011, DKZ1997, HPT2000, KWW2006, MoKa2004, PeWa2009} for the existence of spatially inhomogeneous solutions to~\eqref{GS} in~$\mathbb{R}$ triggered by the availability of multiple spatially homogeneous steady states. Extending such results to~\eqref{NLGS} does not seem to be obvious and we thus restrict ourselves to the behaviour of solutions to~\eqref{CNLGS} in a neighbourhood of the semi-trivial steady state~$(1,0)$.

\begin{theorem}[Stabilization]\label{thm1.2}
	Assume that $\gamma_\ell: \Omega\times\Omega\to [0,\infty)$ is a measurable function satisfying~\eqref{gammaN} for $\ell\in \{1,2\}$. Then the semi-trivial steady state $(1,0)$ is locally asymptotically stable in~$X^2$.
	
	Moreover, if
	\begin{equation}\label{init}
		\|u^0\|_\infty\le 1+\delta \;\;\text{ and }\;\; \|v^0\|_\infty< \frac{f+\kappa}{1+\delta}
	\end{equation} 
	for some $\delta\ge 0$, then
	\begin{equation*}
		\lim_{t\to\infty} \big\{ \|u(t)-1\|_\infty + \|v(t)\|_\infty \big\} = 0
	\end{equation*}
	with
	\begin{equation*}
		\|u(t)\|_\infty\le 1+\delta\,,\qquad \|v(t)\|_\infty\le \|v^0\|_\infty\,, \qquad t\ge 0\,.
	\end{equation*}
\end{theorem}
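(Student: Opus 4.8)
The plan is to separate the two assertions, the linear part being controlled by the same spectral observation in both. Decomposing $\Gamma_{\gamma_\ell}z = K_\ell z - a_\ell z$ with $(K_\ell z)(x):=\int_\Omega\gamma_\ell(x,y)z(y)\,\mathrm dy$ and $a_\ell(x):=\int_\Omega\gamma_\ell(x,y)\,\mathrm dy\le\gamma_\infty$, the operator $\Gamma_{\gamma_\ell}$ is bounded on $X$, annihilates the constant function $\mathbf 1$, and generates a positive semigroup, the positivity following from $\Gamma_{\gamma_\ell}+\gamma_\infty\,\mathrm{id}=K_\ell+(\gamma_\infty-a_\ell)\,\mathrm{id}\ge 0$ by~\eqref{gammaN}. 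Since a positive operator fixing $\mathbf 1$ is a contraction on $L_\infty(\Omega)$ (from $-\mathbf 1\le z\le\mathbf 1$ one gets $-\mathbf 1\le Tz\le\mathbf 1$), it follows that $\|e^{t(d_\ell\Gamma_{\gamma_\ell}-c)}\|_{\mathcal L(X)}\le e^{-ct}$ for every $c>0$, which is the exponential stability of the linear part underpinning the whole proof.

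For the local asymptotic stability I would invoke the principle of linearized stability. Setting $u=1+U$ and $v=V$, the system~\eqref{CNLGS} reads
\begin{equation*}
\partial_t U=(d_1\Gamma_{\gamma_1}-f)U-(1+U)V^2\,,\qquad \partial_t V=(d_2\Gamma_{\gamma_2}-(f+\kappa))V+(1+U)V^2\,,
\end{equation*}
so that the linearisation at $(U,V)=(0,0)$ decouples into two scalar equations governed by $d_1\Gamma_{\gamma_1}-f$ and $d_2\Gamma_{\gamma_2}-(f+\kappa)$. By the previous paragraph both generators produce exponentially decaying semigroups, with rates $f$ and $f+\kappa$ respectively, while the remainder $\mathcal R(U,V)=\big(-(1+U)V^2,(1+U)V^2\big)$ is smooth with $\mathcal R(0,0)=0$ and $D\mathcal R(0,0)=0$. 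The principle of linearized stability in $X^2$ then yields that $(1,0)$ is locally (exponentially) asymptotically stable.

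The quantitative statement is obtained by a barrier argument combined with Duhamel's formula. Since $\Gamma_{\gamma_1}\mathbf 1=0$ and $-(1+\delta)v^2+f\big(1-(1+\delta)\big)=-(1+\delta)v^2-f\delta\le 0$, the constant function $1+\delta$ is a supersolution of~\eqref{uNLGS} for every non-negative $v$, so the comparison principle together with the non-negativity furnished by Theorem~\ref{thm1.1} gives $0\le u(t)\le 1+\delta$ for all $t\ge 0$. Consequently $uv-(f+\kappa)\le(1+\delta)\|v^0\|_\infty-(f+\kappa)=:-\lambda$ with $\lambda>0$ by~\eqref{init}, whence $\partial_t v\le d_2\Gamma_{\gamma_2}v-\lambda v$; comparing with the spatially homogeneous solution $t\mapsto\|v^0\|_\infty e^{-\lambda t}$ of the associated linear equation yields $\|v(t)\|_\infty\le\|v^0\|_\infty e^{-\lambda t}$. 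Finally, writing $w:=u-1$ and using $\Gamma_{\gamma_1}\mathbf 1=0$,
\begin{equation*}
\partial_t w=(d_1\Gamma_{\gamma_1}-f)w-uv^2\,,\qquad \|uv^2(t)\|_\infty\le(1+\delta)\|v^0\|_\infty^2\, e^{-2\lambda t}\,,
\end{equation*}
and Duhamel's formula together with $\|e^{t(d_1\Gamma_{\gamma_1}-f)}\|_{\mathcal L(X)}\le e^{-ft}$ gives
\begin{equation*}
\|w(t)\|_\infty\le e^{-ft}\|u^0-1\|_\infty+(1+\delta)\|v^0\|_\infty^2\int_0^t e^{-f(t-s)}e^{-2\lambda s}\,\mathrm ds\xrightarrow[t\to\infty]{}0\,,
\end{equation*}
the convolution of the two decaying exponentials tending to zero in every case. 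This establishes the convergence and the asserted bounds on $\|u(t)\|_\infty$ and $\|v(t)\|_\infty$.

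The main obstacle I anticipate lies in the barrier/comparison step. In the $L_\infty$ framework the supremum of $u$ or $v$ need not be attained, so the naive pointwise maximum principle for the nonlocal operator is unavailable; the comparison must instead be justified through the positivity of the semigroup generated by $\Gamma_{\gamma_\ell}$ together with a Duhamel/Gronwall estimate on the difference of super- and subsolution. A further delicate point is that the nonlinearity $uv^2$ is not monotone in $v$, so that the constant $\|v^0\|_\infty$ becomes a genuine barrier for the $v$-equation only once the strict sign $u\|v^0\|_\infty<f+\kappa$ has been secured from the invariant bound $u\le 1+\delta$ and the smallness assumption~\eqref{init}.
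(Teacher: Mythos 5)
Your overall strategy is the same as the paper's: establish that $e^{t\Gamma_{\gamma_\ell}}$ is a positive contraction semigroup fixing $\mathbf{1}$, get the first assertion from the principle of linearized stability, then prove the quantitative part via the invariant bound $u\le 1+\delta$, exponential decay of $v$, and a Duhamel estimate for $u-1$. Two of your ingredients are in fact tidier than the paper's: the contraction property of $e^{t\Gamma_{\gamma_\ell}}$ deduced from ``positive and fixes $\mathbf{1}$'' (which uses only the inequality in~\eqref{gammaN}, whereas the paper's proof of Lemma~\ref{lem2.1} integrates $(z-M)_+$ and uses the equality), and the explicit convolution-of-exponentials bound for $\|u(t)-1\|_\infty$, which is a cleaner quantitative version of the paper's $\eta$--$\liminf$ argument.

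There is, however, a genuine gap at the crux of the quantitative part. Your key step
$uv-(f+\kappa)\le(1+\delta)\|v^0\|_\infty-(f+\kappa)=:-\lambda$
presupposes $\|v(t)\|_\infty\le\|v^0\|_\infty$ for the times $t$ at which it is applied, but that bound is exactly what this step is meant to produce: the argument is circular. Knowing $u\le 1+\delta$ alone does not give the sign $u v-(f+\kappa)<0$; you also need to know that $v$ has not yet grown past the threshold. You flag this as a ``delicate point'' in your closing paragraph, but flagging it is not fixing it, and the resolving idea is absent from the proposal. The paper closes the loop with a first-crossing-time (continuation) argument: pick $\varepsilon>0$ with $\|v^0\|_\infty<(f+\kappa-\varepsilon)/(1+\delta)$ and set $t_0:=\inf\{t\ge 0:\ \|v(t)\|_\infty\ge (f+\kappa-\varepsilon)/(1+\delta)\}>0$; on $[0,t_0)$ one has $uv^2-(f+\kappa)v\le-\varepsilon v\le 0$, so Duhamel for~\eqref{vNLGS} and the positivity of $e^{td_2\Gamma_{\gamma_2}}$ give $\|v(t)\|_\infty\le\|v^0\|_\infty$, which stays \emph{strictly below} the threshold and forces $t_0=\infty$; only then does $\partial_t v\le d_2\Gamma_{\gamma_2}v-\varepsilon v$ hold for all times and yield the exponential decay. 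Note that the threshold must be chosen strictly above $\|v^0\|_\infty$ (this is the role of $\varepsilon$ and of the strict inequality in~\eqref{init}); taking $\|v^0\|_\infty$ itself as the barrier, as your write-up suggests, gives $t_0=0$ and no room to run the argument. Once this bootstrap is inserted, the remainder of your proof goes through as written.
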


While the first assertion in Theorem~\ref{thm1.2} is a consequence of the principle of linearized stability, the second one stems from the dissipativity properties of the operator $d_2 \Gamma_{\gamma_2} - (f+\kappa)$ in $X$, see Section~\ref{sec3}.

\bigskip

The last contribution of this paper deals, as mentioned previously, with the connection between the nonlocal Gray-Scott model~\eqref{NLGS} and the (local) Gray-Scott model~\eqref{GS}.  To study this question, we assume that the kernels $(\gamma_1,\gamma_2)$ are given by
\begin{equation}
	\gamma_1(x,y) = \gamma_2(x,y) = j^{n+2} \varphi(j(x-y))\,, \qquad (x,y)\in \Omega^2\,, \quad j\ge 1\,, \label{sc1}
\end{equation}
where $\varphi\in C_0^\infty(\mathbb{R}^n)$ is a non-negative radially symmetric function with compact support and a non-increasing radial profile; that is, $\tilde{\varphi}(r) := \varphi(x)$ for $x\in\mathbb{R}^n$ with $|x|=r$ and $r\ge 0$ is a non-increasing function on~$[0,\infty)$. Such a connection is already alluded to in~\cite[Section~4.5]{CJW2022} and the purpose of the next result is to provide a mathematical proof of this connection.

\begin{theorem}[Diffusive limit]\label{thm1.3}
	Let $n\ge 2$ and assume that $\Omega$ is a bounded domain with $C^{2+\alpha}$-smooth boundary $\partial\Omega$. Consider a non-negative radially symmetric function  $\varphi\in C_0^\infty(\mathbb{R}^n)$ with compact support and a non-increasing radial profile. For each integer $j\ge 1$, we define $\chi_{j}: \Omega\times\Omega\to [0,\infty)$ by
	\begin{equation*}
		\chi_{j}(x,y) := j^{n+2} \varphi(j(x-y))\,, \qquad (x,y)\in \Omega^2\,,
	\end{equation*}
	and, for $z\in X$, 
	\begin{equation*}
		\Gamma_{\chi_{j}} z(x) := \int_\Omega \chi_{j}(x,y) (z(y)-z(x))\,\mathrm{d}y\,, \qquad x\in\Omega\,.
	\end{equation*}
	Given $(u^0,v^0)\in X^+\times X^+$, let $(u_j,v_j)\in C^1([0,\infty),X^+\times X^+)$ be the solution to
	\begin{subequations}\label{CjNLGS}
		\begin{align}
			\partial_t u_j & = d_1 \Gamma_{\chi_j} u_j - u_j v_j^2 + f (1-u_j) \;\;\text{ in }\;\; (0,\infty)\times\Omega\,,  \label{ujNLGS} \\
			\partial_t v_j & = d_2 \Gamma_{\chi_j} v_j + u_j v_j^2- (f + \kappa) v_j \;\;\text{ in }\;\; (0,\infty)\times\Omega\,, \label{vjNLGS} \\
			(u_j,v_j)(0) & = \big( u^0,v^0) \;\;\text{ in }\;\; \Omega\,.  \label{ijNLGS}
		\end{align}
	\end{subequations} 
There are a subsequence $(j_k)_{k\ge 1}$ and non-negative functions 
\begin{equation}
	\begin{split}
		u& \in L_\infty((0,\infty)\times\Omega) \cap L_{2,\mathrm{loc}}([0,\infty),H^1(\Omega))\,, \\
		v& \in L_\infty((0,\infty),L_2(\Omega)) \cap L_{2,\mathrm{loc}}([0,\infty),H^1(\Omega))\,,
	\end{split} \label{p3}
\end{equation}	
such that
\begin{equation}
	\lim_{k\to\infty} \int_0^T \int_\Omega \left( |(u_{j_k}-u)(t,x)|^2 + |(v_{j_k}-v)(t,x)|^2 \right) \,\mathrm{d}x \mathrm{d}t  = 0 \label{p4}
\end{equation}	
for all $T>0$. Moreover, $(u,v)$ is a weak solution to~\eqref{GS} supplemented with homogeneous Neumann boundary conditions in the following sense: 
\begin{equation}
	\begin{split}
		\int_\Omega \big( u(t,x)-u^0(x) \big) \vartheta(x)\,\mathrm{d}x & + \frac{m_2 d_1}{2n} \int_0^t \int_\Omega 	\nabla 	u(s,x)\cdot \nabla\vartheta(x)\,\mathrm{d}x\mathrm{d}s \\
		& \qquad = \int_0^t \int_\Omega \big( f(1-u) - uv^2 \big)(s,x)\,\mathrm{d}x\mathrm{d}s
	\end{split} \label{p5}
\end{equation}
and
\begin{equation}
	\begin{split}
		\int_\Omega \big( v(t,x)-v^0(x) \big) \vartheta(x)\,\mathrm{d}x & + \frac{m_2 d_2}{2n} \int_0^t \int_\Omega 	\nabla 	v(s,x)\cdot \nabla\vartheta(x)\,\mathrm{d}x\mathrm{d}s \\
		& \qquad = \int_0^t \int_\Omega \big(uv^2 - (f+\kappa) v \big)(s,x)\,\mathrm{d}x\mathrm{d}s
	\end{split} \label{p6}
\end{equation}
for all $\vartheta\in W^1_{n+1}(\Omega)$ and $t>0$, where
\begin{equation}
	m_2 := \int_{\mathbb{R}^n} |x|^2 \varphi(x)\,\mathrm{d}x>0\,. \label{m2}
\end{equation}.
\end{theorem}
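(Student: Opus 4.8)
The plan is to derive uniform-in-$j$ a priori estimates for $(u_j,v_j)$ on each interval $[0,T]$, extract a convergent subsequence by a nonlocal Aubin--Lions compactness argument, and pass to the limit in a weak formulation of \eqref{CjNLGS}, the decisive point being the convergence of the nonlocal Dirichlet forms to the Dirichlet integral. First I would collect the estimates that come for free from the structure. Since $-u_jv_j^2\le 0$, the first equation gives $\partial_t u_j\le d_1\Gamma_{\chi_j}u_j+f(1-u_j)$, and the nonlocal maximum principle (comparison with the constant $M:=\max\{1,\|u^0\|_\infty\}$, available because $\Gamma_{\chi_j}$ generates an order-preserving semigroup) yields $0\le u_j\le M$ uniformly in $j$ and $t$. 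As $\chi_j$ is symmetric, $\int_\Omega\Gamma_{\chi_j}z\,\mathrm dx=0$, so integrating and adding the two equations gives $\frac{\mathrm d}{\mathrm dt}\int_\Omega(u_j+v_j)\,\mathrm dx=f|\Omega|-f\int_\Omega u_j\,\mathrm dx-(f+\kappa)\int_\Omega v_j\,\mathrm dx$, whence a uniform $L_1$ bound $\int_\Omega(u_j+v_j)\,\mathrm dx\le C_0$ and $\int_0^T\!\!\int_\Omega u_jv_j^2\,\mathrm dx\mathrm dt\le C(T)$. Testing the first equation with $u_j$ and using $u_j\le M$ then controls $\int_0^T\mathcal E_{\chi_j}(u_j)\,\mathrm dt\le C(T)$ and $\int_0^T\!\!\int_\Omega u_j^2v_j^2\le C(T)$, where $\mathcal E_{\chi_j}(z):=\tfrac12\iint_{\Omega^2}\chi_j(x,y)(z(x)-z(y))^2\,\mathrm dx\mathrm dy$.

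The main obstacle is the uniform control of $v_j$: there is no maximum principle for $v_j$, and the naive energy estimate leaves the supercritical term $\int_\Omega u_jv_j^3$ uncontrolled. Here I would exploit the mass-dissipative (Hollis--Martin--Pierre) structure, i.e. that the sum of the two reaction terms equals the affine quantity $f-fu_j-(f+\kappa)v_j$. Concretely, testing the second equation with $v_j$ and rewriting $\int_\Omega u_jv_j^3$ by testing the first equation with $v_j$, one is led to an estimate for the combined functional $Y_j:=\tfrac12\|v_j\|_2^2+\int_\Omega u_jv_j\,\mathrm dx$ in which the cubic term is replaced by $\int_\Omega u_j^2v_j^2$ and by the cross-form $(d_1+d_2)B_{\chi_j}(u_j,v_j)$, where $B_{\chi_j}$ is the symmetric bilinear form associated with $\mathcal E_{\chi_j}$. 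The cross-form is absorbed into $d_2\mathcal E_{\chi_j}(v_j)$ and the already-controlled $\mathcal E_{\chi_j}(u_j)$ by Cauchy--Schwarz and Young's inequality, the term $\int_\Omega u_j^2v_j^2\le M^2\|v_j\|_2^2$ is handled by Gronwall's lemma, and the remaining contributions are bounded through the $L_1$ mass estimate. This furnishes, uniformly in $j$, the bounds $\sup_{[0,T]}\|v_j(t)\|_2\le C(T)$ and $\int_0^T\mathcal E_{\chi_j}(v_j)\,\mathrm dt\le C(T)$.

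With these bounds in hand I would estimate the time derivatives using the self-adjointness identity $\int_\Omega(\Gamma_{\chi_j}z)\vartheta\,\mathrm dx=-B_{\chi_j}(z,\vartheta)$ together with $|B_{\chi_j}(z,\vartheta)|\le \mathcal E_{\chi_j}(z)^{1/2}\mathcal E_{\chi_j}(\vartheta)^{1/2}$ and $\sup_j\mathcal E_{\chi_j}(\vartheta)<\infty$ for smooth $\vartheta$, which bounds $\partial_t u_j$ and $\partial_t v_j$ uniformly in a dual space. Combining the uniform nonlocal-energy bounds with these time-derivative bounds, a nonlocal compactness theorem of Bourgain--Brezis--Mironescu/Aubin--Lions type yields a subsequence $(u_{j_k},v_{j_k})$ converging strongly in $L_2((0,T)\times\Omega)$ to limits $(u,v)$; the same circle of ideas shows, by lower semicontinuity, that $u,v\in L_2(0,T;H^1(\Omega))$, and a diagonal argument over $T\in\mathbb N$ produces \eqref{p3}--\eqref{p4}. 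The nonlinear term passes to the limit in $L_1$ by Vitali's theorem, since $u_{j_k}$ is bounded and $v_{j_k}^2\to v^2$ in $L_1$.

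Finally, the convergence of the linear part rests on the radial-symmetry computation $\int_{\mathbb R^n}\varphi(z)z_iz_k\,\mathrm dz=\tfrac{m_2}{n}\delta_{ik}$ with $m_2$ as in \eqref{m2}: writing the tested nonlocal term as $-B_{\chi_j}(u_j,\vartheta)$ and performing the rescaling $z=j(x-y)$, a second-order Taylor expansion of $\vartheta$ identifies its limit as $-\tfrac{m_2}{2n}\int_\Omega\nabla u\cdot\nabla\vartheta\,\mathrm dx$ (the odd first-order term vanishing by radial symmetry), which, assembled with the previous convergences, gives the weak formulation \eqref{p5}--\eqref{p6}. The two points requiring the most care are the uniform $v_j$-estimate above and making the nonlocal-to-local convergence of the Dirichlet forms rigorous together with its compactness counterpart.
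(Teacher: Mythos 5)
Your derivation of the a priori bounds is sound and is essentially the paper's own argument in disguise: your functional $\tfrac12\|v_j\|_2^2+\int_\Omega u_jv_j\,\mathrm{d}x$ equals $\tfrac12\|u_j+v_j\|_2^2-\tfrac12\|u_j\|_2^2$, so your computation is the same cancellation of the cubic terms that the paper exploits by working with $h_j=u_j+v_j$ in Lemma~\ref{lem4.1}; the time-derivative bounds and the compactness step likewise mirror Lemma~\ref{lem4.2} and Lemma~\ref{lem4.4}. Two secondary remarks: (i) your Gronwall argument yields $\sup_{[0,T]}\|v_j\|_2\le C(T)$ with $T$-dependent constants, which gives $v\in L_{\infty,\mathrm{loc}}$ rather than the uniform bound $v\in L_\infty((0,\infty),L_2(\Omega))$ asserted in \eqref{p3}; the paper's linear combination $\|u_j\|_2^2+C_2\|h_j\|_2^2$ keeps the dissipative term $f(\|u_j\|_2^2+C_2\|h_j\|_2^2)$ on the left and so gets a time-uniform bound. (ii) The ``nonlocal Aubin--Lions theorem'' you invoke is not off the shelf: the published versions are for the torus, and the paper has to prove the bounded-domain, time-dependent version itself (Proposition~\ref{propA.1}), using Ponce's boundary lemma to control the behaviour near $\partial\Omega$.

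The genuine gap is in your last step, the identification of the limit. You claim that after rescaling, ``the odd first-order term vanish[es] by radial symmetry.'' This is true only at points $x$ with $\mathrm{dist}(x,\partial\Omega)>R/j$ (where $\mathrm{supp}\,\varphi\subset B_R(0)$): there the $y$-integration covers a full ball centred at $x$ and $\int z\varphi(z)\,\mathrm{d}z=0$ applies. For $x$ in the boundary layer of width $R/j$ the integration in $y$ runs over $\Omega$ only, the symmetry is broken, and the first-order term survives; pointwise it is of order $j$, concentrated on a strip of measure $O(1/j)$, i.e.\ a bounded but non-vanishing contribution that concentrates on $\partial\Omega$ as $j\to\infty$. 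This boundary layer is exactly the mechanism that produces the homogeneous Neumann condition in the limit, so it cannot be discarded: with your argument restricted to interior test functions you would identify the PDE inside $\Omega$ but not the boundary condition, and then \eqref{p5}--\eqref{p6} for all $\vartheta\in W_{n+1}^1(\Omega)$ --- equivalently, the uniqueness of the limit --- would remain unproven. The paper avoids this trap by not passing to the limit in the weak formulation at all: in Proposition~\ref{propB.1} it compares $w_j$ with smooth solutions $W$ of the \emph{local Neumann} problem, splits the consistency error $F_j=d\,\Gamma_{\chi_j}W-D\Delta W$ into an interior part of size $j^{-\alpha}$ and a boundary part which, thanks to $\nabla W\cdot\mathbf{n}=0$ and \cite[Lemma~3.14]{AVMRTM2010}, stays merely bounded on the strip $\Omega_{1/j}$ and hence vanishes in $L_1$, and concludes by an $L_1$-contraction argument. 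To repair your proof you would either have to adopt such a comparison argument, or carry out a genuine boundary-layer analysis (in the spirit of Mosco convergence of the nonlocal forms to the Neumann Dirichlet form) for the bilinear term $B_{\chi_j}(u_j,\vartheta)$; the radial-symmetry cancellation alone does not do it.
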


In recent years, several works have been devoted to the proof of the convergence of nonlocal equations or systems to their diffusive limits and, among others, we refer to \cite{AVMRTM2010} for $p$-Laplacian equation, to \cite{AbHu2023, CES2023, DRST2020, DST2021, ElSk2023, MRST2019} for degenerate and non-degenerate Cahn-Hilliard equations and systems, to \cite{AbTe2022} for the Navier-Stokes/Cahn-Hilliard system, and to~\cite{Mou2020} for cross-diffusion systems with triangular structure. As in \cite{CES2023, ElSk2023}, the main difficulty we face here is to establish the strong compactness of the sequence $(u_j,v_j)_{j\ge 1}$ as the nonlocal operators do not provide estimates in $H^1(\Omega)$, in contrast to the Laplace operator. Still, the specific choice of the scaling of the nonlocal operators $\Gamma_{\chi_j}$ allows us to recover compactness, which is obtained by adapting the compactness result derived in~\cite[Theorem~4]{BBM2001} and further developed in~\cite[Theorem~6.11]{AVMRTM2010} and~\cite[Theorem~1.2]{Pon2004} to a time-dependent setting, thereby obtaining a nonlocal version of the compactness results in~\cite{Sim1987}. Similar results are already obtained in~\cite{CES2023, ElSk2023} when $\Omega$ is the $n$-dimensional torus and the version we provide in Proposition~\ref{propA.1} below is adapted to a bounded domain and nonlocal operators of the form~\eqref{nlo}.  As a final comment, let us point out that Theorem~\ref{thm1.3} is likely to be true in one space dimension $n=1$ as well. However, the analysis performed in~\cite{Pon2004}, on which the compactness argument used in the proof of Theorem~\ref{thm1.3} relies, is slightly different in that case and we have chosen not to include the one-dimensional setting for simplicity.

\section{Well-posedness: Proof of Theorem~\ref{thm1.1}}\label{sec2}

We begin this section with the properties of the linear term in~\eqref{CNLGS}. Specifically, given a non-negative measurable function $\gamma: \Omega\times\Omega \to [0,\infty)$ and a measurable function $z:\Omega\to\mathbb{R}$, we define
\begin{equation}
	\Gamma_{\gamma} z(x) := \int_\Omega \gamma(x,y) \big( z(y)-z(x) \big)\,\mathrm{d}y\,, \qquad x\in\Omega\,, \label{b1}
\end{equation}
whenever it makes sense.

\begin{lemma}\label{lem2.1}
	Assume that there is $\gamma_\infty>0$ such that
	\begin{equation}
		\int_{\Omega} \gamma(y,x)\,\mathrm{d} y = \int_{\Omega} \gamma(x,y)\,\mathrm{d} y \le  \gamma_\infty<\infty\,, \qquad x\in\Omega\,. \label{b2}
	\end{equation}
	Then $\Gamma_{\gamma}\in\mathcal{L}(X)$ with $\|\Gamma_{\gamma}\|_{\mathcal{L}(X)} \le 2 \gamma_\infty$, and $\Gamma_{\gamma}$ generates a uniformly continuous semigroup $\big( e^{t\Gamma_{\gamma}} \big)_{t\ge 0}$ on $X$ satisfying
	\begin{equation}
		\|e^{t\Gamma_{\gamma}}\|_{\mathcal{L}(X)}\le 1\,,\qquad t\ge0\,. \label{b3}
	\end{equation}
	Moreover, for $t\ge 0$,
	\begin{equation}
		e^{t\Gamma_{\gamma}}\ge 0 \;\;\text{ and }\;\; e^{t\Gamma_{\gamma}} 1= 1\,. \label{b4}
	\end{equation} 
	If, in addition, $\gamma\in C(\bar{\Omega}\times\bar{\Omega})$, then the above statements remain true with $C(\bar{\Omega})$ replacing $X$.
\end{lemma}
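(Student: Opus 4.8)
The plan is to prove Lemma~\ref{lem2.1} by first establishing boundedness of $\Gamma_\gamma$ on $X=L_\infty(\Omega)$, then invoking the standard theory of bounded generators, and finally verifying positivity and the invariance $e^{t\Gamma_\gamma}1=1$.

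\medskip

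First I would show $\Gamma_\gamma\in\mathcal{L}(X)$ with the stated bound. Splitting
\[
\Gamma_\gamma z(x) = \int_\Omega \gamma(x,y) z(y)\,\mathrm{d}y - z(x)\int_\Omega \gamma(x,y)\,\mathrm{d}y\,,
\]
the second term is bounded in absolute value by $\gamma_\infty\|z\|_\infty$ using~\eqref{b2}, while the first is bounded by $\|z\|_\infty\int_\Omega\gamma(x,y)\,\mathrm{d}y\le\gamma_\infty\|z\|_\infty$; adding these gives $\|\Gamma_\gamma z\|_\infty\le 2\gamma_\infty\|z\|_\infty$, hence $\|\Gamma_\gamma\|_{\mathcal{L}(X)}\le 2\gamma_\infty$. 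Since $\Gamma_\gamma$ is a bounded linear operator on the Banach space $X$, it generates a uniformly continuous semigroup $e^{t\Gamma_\gamma}=\sum_{k\ge 0}t^k\Gamma_\gamma^k/k!$ by the classical result on bounded generators.

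\medskip

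Next I would establish the contraction estimate~\eqref{b3} and the positivity in~\eqref{b4}. The clean way is to write $\Gamma_\gamma = A - M$, where $Mz(x) := z(x)\int_\Omega\gamma(x,y)\,\mathrm{d}y$ is multiplication by the nonnegative bounded function $a(x):=\int_\Omega\gamma(x,y)\,\mathrm{d}y\le\gamma_\infty$, and $Az(x):=\int_\Omega\gamma(x,y)z(y)\,\mathrm{d}y$ is a positive integral operator. Since $\gamma\ge 0$, $A$ is positivity-preserving, and so is $e^{tM}$ (multiplication by $e^{-ta(\cdot)}>0$); the perturbation formula together with the nonnegativity of all terms in the Dyson--Phillips series shows $e^{t\Gamma_\gamma}\ge 0$. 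For~\eqref{b3}, the equality in the hypothesis~\eqref{b2} — namely $\int_\Omega\gamma(y,x)\,\mathrm{d}y=\int_\Omega\gamma(x,y)\,\mathrm{d}y$ — is the crucial structural input: it makes $\Gamma_\gamma$ ``conservative'' in the sense that $\Gamma_\gamma^*$ has the same form. I would argue that the dual semigroup on $L_1(\Omega)$ preserves the integral (since $\int_\Omega\Gamma_\gamma z\,\mathrm{d}x=0$ follows from the equality in~\eqref{b2} by Fubini), whence $e^{t\Gamma_\gamma}$ is sub-Markovian; combined with positivity and $e^{t\Gamma_\gamma}1=1$ this yields $\|e^{t\Gamma_\gamma}z\|_\infty\le\|z\|_\infty$. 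The identity $e^{t\Gamma_\gamma}1=1$ is immediate from $\Gamma_\gamma 1=0$, which holds by the definition~\eqref{b1} since $\mathbf{1}(y)-\mathbf{1}(x)=0$, so $\Gamma_\gamma^k 1=0$ for $k\ge 1$ and only the $k=0$ term survives in the exponential series.

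\medskip

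Finally, for the continuous case I would observe that when $\gamma\in C(\bar\Omega\times\bar\Omega)$, the integral operator $A$ maps $C(\bar\Omega)$ into $C(\bar\Omega)$ by dominated convergence, and the multiplier $a$ is continuous, so $\Gamma_\gamma\in\mathcal{L}(C(\bar\Omega))$ with the same norm bound and all the preceding arguments transfer verbatim, $C(\bar\Omega)$ being a closed subspace of $X$ invariant under $\Gamma_\gamma$ and hence under $e^{t\Gamma_\gamma}$. The main obstacle I anticipate is the sharp contraction bound~\eqref{b3}: the boundedness, positivity, and $e^{t\Gamma_\gamma}1=1$ are routine, but extracting $\|e^{t\Gamma_\gamma}\|_{\mathcal{L}(X)}\le 1$ requires genuinely using the symmetry-of-marginals equality in~\eqref{b2}, and the cleanest route is probably to verify the dissipativity estimate $\|(\lambda-\Gamma_\gamma)z\|_\infty\ge\lambda\|z\|_\infty$ for $\lambda>0$ directly at a point where $|z|$ attains its maximum, then apply the Lumer--Phillips / Hille--Yosida characterization of contraction semigroups.
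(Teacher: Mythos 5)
Your proposal is correct, but it takes a genuinely different route from the paper. The paper derives everything from a single truncation estimate: it shows that $t\mapsto\int_\Omega\big(z(t,x)-M\big)_+\,\mathrm{d}x$ is non-increasing for every level $M\in\mathbb{R}$ --- an $L_1$-type argument that genuinely uses the marginal condition~\eqref{b2} (at least the inequality $\int_\Omega\gamma(y,x)\,\mathrm{d}y\le\int_\Omega\gamma(x,y)\,\mathrm{d}y$) --- and then reads off the contraction~\eqref{b3} (take $M=\|z^0\|_\infty$, applied to $\pm z^0$) and the positivity in~\eqref{b4} (take $M=0$, applied to $-z^0$) from the resulting pointwise bound~\eqref{b5}. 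You instead argue order-theoretically: positivity from the splitting $\Gamma_\gamma=A-M$ and the positive-term Dyson--Phillips series (equivalently, from $e^{t\Gamma_\gamma}=e^{-t\gamma_\infty}e^{t(\Gamma_\gamma+\gamma_\infty)}$ with $\Gamma_\gamma+\gamma_\infty$ a positive operator), conservation from $\Gamma_\gamma 1=0$, and then~\eqref{b3} from positivity plus conservation via $|e^{t\Gamma_\gamma}z|\le e^{t\Gamma_\gamma}|z|\le\|z\|_\infty\,e^{t\Gamma_\gamma}1$. Both are complete proofs; yours is more elementary and standard, while the paper's yields the stronger comparison estimate~\eqref{b5}, which is not part of the lemma's statement but is reused later (it gives the bound $u\le 1+e^{-ft}\big(\|u^0\|_\infty-1\big)_+$ in Proposition~\ref{prop2.2} and Corollary~\ref{cor2.3}). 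One conceptual correction: you assert that the equal-marginals equality in~\eqref{b2} is ``the crucial structural input'' for~\eqref{b3}, but your own argument disproves this --- positivity needs only $\gamma\ge 0$ and boundedness, conservation needs nothing beyond the definition~\eqref{b1}, and together they give the contraction; the same holds for your alternative dissipativity-at-the-maximum argument (which, for $X=L_\infty(\Omega)$, requires points of approximate maximum rather than an attained maximum, but is otherwise sound). The equality is what makes the paper's integral truncation argument work, not the contraction itself. Relatedly, your detour through the dual $L_1$ semigroup and ``sub-Markovianity'' is superfluous and somewhat circular as stated; it can simply be deleted, since the two ingredients you already have (positivity and $e^{t\Gamma_\gamma}1=1$) suffice.
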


\begin{proof}
	It readily follows from~\eqref{b2} that, for $z\in X$ and $x\in\Omega$, 
	\begin{equation*}
		|\Gamma_{\gamma} z(x)| \le \int_\Omega \gamma(x,y) |z(y)-z(x)|\,\mathrm{d} y \le 2 \gamma_\infty \|z\|_\infty\,.
	\end{equation*}
	Hence $\Gamma_{\gamma}\in\mathcal{L}(X)$ with $\|\Gamma_{\gamma}\|_{\mathcal{L}(X)} \le 2 \gamma_\infty$ and $\Gamma_{\gamma}$ generates a uniformly continuous semigroup $\big( e^{t\Gamma_{\gamma}} \big)_{t\ge 0}$ on $X$. In order to prove that $\big( e^{t\Gamma_{\gamma}} \big)_{t\ge 0}$ is a semigroup of contractions on $X$, we pick $z^0\in X$ and set $z(t) := e^{t\Gamma_{\gamma}} z^0$ for $t\ge 0$. We then note that, for $M\in\mathbb{R}$ and $t>0$,
	\begin{align*}
		\frac{\mathrm{d}}{\mathrm{d} t} \int_\Omega \big(z(t,x)-M\big)_+\, \mathrm{d} x
		&=\int_\Omega (\Gamma_{\gamma} z(t))(x)\, \mathrm{sign}_+\big(z(t,x)-M\big) \,\mathrm{d} x\\
		&=\int_\Omega  \int_{\Omega} \gamma(x,y) \big(z(t,y)-z(t,x)\big)\, \mathrm{sign}_+ \big(z(t,x)-M\big)\,\mathrm{d} y\mathrm{d} x\\
		&=\int_\Omega  \int_{\Omega} \gamma(x,y) \big(z(t,y)-M\big) \,\mathrm{sign}_+\big(z(t,x)-M\big) \,\mathrm{d} y\mathrm{d} x\\
		&\quad - \int_\Omega  \int_{\Omega} \gamma(x,y) \big(z(t,x)-M\big)_+\,\mathrm{d} y\mathrm{d} x\\
		&\le  \int_\Omega  \big(z(t,y)-M \big)_+ \int_{\Omega} \gamma(x,y)\,\mathrm{d} x\mathrm{d} y\\
		&\quad -\int_\Omega  \big(z(t,x)-M \big)_+ \int_{\Omega} \gamma(x,y)\,\mathrm{d} y\mathrm{d} x\\
		&=  \int_\Omega  \big(z(t,x)-M \big)_+ \int_{\Omega} \gamma(y,x)\,\mathrm{d} y\mathrm{d} x\\
		&\quad -\int_\Omega  \big(z(t,x)-M\big)_+ \int_{\Omega} \gamma(x,y)\,\mathrm{d} y\mathrm{d} x\\
		& \le 0\,,
	\end{align*}
	where we used~\eqref{b2} to obtain the last inequality. Consequently, 
	\begin{equation*}
		\frac{\mathrm{d}}{\mathrm{d}t} \int_\Omega \big(z(t,x)-M\big)_+\, \mathrm{d} x \le 0\,, \qquad t>0\,,
	\end{equation*}
	and thus
	\begin{equation}
		\big( e^{t\Gamma_{\gamma}} z^0 \big)(x) \le M + (z^0(x) - M)_+\,, \qquad (t,x)\in [0,\infty)\times\Omega\,. \label{b5}
	\end{equation}
	Applying~\eqref{b5} with $M=\|z^0\|_\infty$ to $z^0$ and $(-z^0)$, we conclude that $\|e^{t\Gamma_{\gamma}}z^0\|_\infty\le \|z^0\|_\infty$ for $t\ge 0$. 
	
	Consider now $z^0\in X^+$ and apply~\eqref{b5} with $M=0$ to $(-z^0)$ to obtain $-\big( e^{t\Gamma_{\gamma}} z^0\big)(x)\le 0$ for $(t,x)\in [0,\infty)\times\Omega$, which shows the positivity of the semigroup claimed in~\eqref{b4}. Finally, the second property stated in~\eqref{b4} is an obvious consequence of the definition~\eqref{b1} of~$\Gamma_{\gamma}$.
\end{proof}

We now show the well-posedness of~\eqref{CNLGS} in $X^+\times X^+$.

\begin{proposition}\label{prop2.2}
	Assume that $\gamma_\ell: \Omega\times\Omega\to [0,\infty)$ is a measurable function satisfying~\eqref{gammaN} for $\ell\in \{1,2\}$. For every $(u^0,v^0)\in X^+\times X^+$ there is a unique non-negative global solution 
	\begin{equation*}
		(u,v)\in C^1\big([0,\infty),X^+\times X^+\big)
	\end{equation*}
	to~\eqref{CNLGS}. In fact, the mapping $(u^0,v^0)\mapsto (u,v)$ defines a global semiflow on $X^+\times X^+$. If $\gamma_\ell\in C\big(\bar{\Omega}\times\bar{\Omega}\big)$ for $\ell\in \{1,2\}$, then the same result is true when replacing $X$ by~$C\big(\bar{\Omega}\big)$.
\end{proposition}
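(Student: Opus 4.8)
The plan is to treat \eqref{CNLGS} as a semilinear abstract evolution equation in the Banach space $X^2 = X\times X$ and to apply the standard theory of semilinear equations with a locally Lipschitz nonlinearity. Writing $w := (u,v)$, we recast the system as
\begin{equation*}
	\partial_t w = A w + F(w)\,, \qquad w(0) = w^0 := (u^0,v^0)\,,
\end{equation*}
where $A := \mathrm{diag}(d_1 \Gamma_{\gamma_1} - f,\ d_2 \Gamma_{\gamma_2} - (f+\kappa))$ and $F(u,v) := (f - uv^2,\ uv^2)$. By Lemma~\ref{lem2.1}, each $\Gamma_{\gamma_\ell}$ belongs to $\mathcal{L}(X)$, so $A\in\mathcal{L}(X^2)$ generates a uniformly continuous semigroup on $X^2$; in particular $A$ is the generator and the linear part poses no difficulty. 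The nonlinearity $F:X^2\to X^2$ is clearly locally Lipschitz continuous, since for $(u,v)$ and $(\bar u,\bar v)$ in a bounded set of $X^2$ one estimates $\|uv^2 - \bar u\bar v^2\|_\infty$ by adding and subtracting $\bar u v^2$ and using the algebra property of $X = L_\infty(\Omega)$. Consequently, the Picard--Lindelöf theorem for abstract ODEs in Banach spaces yields a unique maximal mild (in fact classical, since $A$ is bounded) solution $w\in C^1([0,T_{\max}),X^2)$, and continuous dependence on $w^0$ gives the semiflow property on the maximal interval of existence.

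The substantive work is to upgrade this local solution to a global, non-negative, and bounded one; there are three points to verify. First, for \textbf{non-negativity} I would use the variation-of-constants formula together with a quasi-positivity (invariance) argument: the reaction terms satisfy $-uv^2 + f(1-u) \ge 0$ whenever $u=0$ and $v\ge 0$, and $uv^2 - (f+\kappa)v \ge 0$ whenever $v=0$ and $u\ge 0$, so the nonlinear reaction vector field points inward on the boundary of the positive cone. Combined with the positivity of the semigroups $e^{t\Gamma_{\gamma_\ell}}$ established in \eqref{b4}, a standard invariant-region / fixed-point argument on small time intervals (or a Picard iteration that preserves $X^+\times X^+$) shows that $X^+\times X^+$ is positively invariant, so the solution stays non-negative as long as it exists.

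Second, for \textbf{boundedness}, which is what ultimately forces $T_{\max}=\infty$, I would derive \emph{a priori} $L_\infty$ bounds. For $u$: using the comparison principle afforded by the contraction semigroup $e^{t(d_1\Gamma_{\gamma_1}-f)}$ (which maps constants appropriately because $\Gamma_{\gamma_1}1=0$) together with the fact that $-uv^2\le 0$ and $f(1-u)$ is a relaxation toward $u=1$, one shows $\|u(t)\|_\infty \le \max\{\|u^0\|_\infty,1\}$ for all $t$. For $v$: the estimate is less immediate because $uv^2$ is superlinear in $v$; here the mass-type structure of the system is the key tool. Summing the two equations, the reaction terms $-uv^2$ and $+uv^2$ cancel, leaving a dissipative balance $\partial_t(u+v) = d_1\Gamma_{\gamma_1}u + d_2\Gamma_{\gamma_2}v + f(1-u) - (f+\kappa)v$; controlling $u+v$ (and hence $v$, given the bound on $u$) through this cancellation, via an argument analogous to the $(z-M)_+$ computation in the proof of Lemma~\ref{lem2.1}, yields a uniform bound on $\|v(t)\|_\infty$.

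The \textbf{main obstacle} is precisely obtaining the uniform $L_\infty$ bound on $v$: the quadratic term $uv^2$ is a genuine superlinear source for $v$, and unlike the local problem \eqref{GS} one cannot invoke the standard results of \cite{HMP1987} for reaction--diffusion systems, so the bound must be extracted directly from the $L_\infty$-dissipativity of $d_2\Gamma_{\gamma_2}-(f+\kappa)$ and the mass-balance cancellation. Once a global bound $\sup_{t\ge 0}\{\|u(t)\|_\infty + \|v(t)\|_\infty\}<\infty$ is in hand, the blow-up alternative for semilinear equations in Banach spaces forces $T_{\max}=\infty$, completing the proof. Finally, the continuous-kernel case is handled by the identical argument, replacing $X=L_\infty(\Omega)$ by $C(\bar\Omega)$ and invoking the corresponding assertions of Lemma~\ref{lem2.1}, noting that $C(\bar\Omega)$ is likewise a Banach algebra and that $F$ maps $C(\bar\Omega)^2$ into itself.
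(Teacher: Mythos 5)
Your strategy coincides with the paper's: local well-posedness from the bounded generator plus a locally Lipschitz nonlinearity, positivity from quasi-positivity combined with the positive semigroups of Lemma~\ref{lem2.1}, the $L_\infty$ bound on $u$ from the relaxation structure of the $u$-equation, the bound on $v$ from the mass-balance cancellation in $u+v$, and the blow-up alternative to get $T_{\max}=\infty$. The paper implements the positivity step exactly as you sketch it, via the standard shift: for $z\ge 0$ with $\|z\|_{X^2}\le R$ one has $F(z)+\xi(R)z\ge 0$ with $\xi(R)=f+\max\{\kappa,R^2\}$, and Duhamel with the positive semigroup $e^{t(A-\xi(R))}$ preserves the cone.

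There is, however, one step which as you state it would not go through: the bound on $v$. When $d_1\Gamma_{\gamma_1}\neq d_2\Gamma_{\gamma_2}$ (different coefficients or different kernels), the summed equation $\partial_t h = d_1\Gamma_{\gamma_1}u + d_2\Gamma_{\gamma_2}v + f(1-u)-(f+\kappa)v$ for $h:=u+v$ is \emph{not} a closed scalar equation for $h$: its diffusion part is not of the form $\Gamma h$ for a single kernel, so the $(z-M)_+$ computation of Lemma~\ref{lem2.1}, which pairs $\Gamma_\gamma z$ against $\mathrm{sign}_+(z-M)$, cannot be applied to it directly, and no comparison principle is available for the pair of mismatched operators. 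The paper's fix is to rewrite $d_1\Gamma_{\gamma_1}u + d_2\Gamma_{\gamma_2}v = d_2\Gamma_{\gamma_2}h + (d_1\Gamma_{\gamma_1}-d_2\Gamma_{\gamma_2})u$ and to treat the mismatch as a bounded forcing term: since $\Gamma_{\gamma_\ell}\in\mathcal{L}(X)$ (this is precisely where the nonlocal setting differs from the Laplacian, for which such a rewriting would be useless) and $u$ is already bounded in $L_\infty$, one gets $\|(d_1\Gamma_{\gamma_1}-d_2\Gamma_{\gamma_2})u(s)\|_\infty \le 2\big(|d_1-d_2|\gamma_\infty + m d_2\big)\big(1+\|u^0\|_\infty\big)$ with $m:=\mathop{\mathrm{ess\,sup}}_{x}\int_\Omega|(\gamma_1-\gamma_2)(x,y)|\,\mathrm{d}y$, and then Duhamel with the contraction semigroup $e^{t(d_2\Gamma_{\gamma_2}-f)}$, dropping $-\kappa v\le 0$, yields a uniform bound on $h$ and hence on $0\le v\le h$. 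With this one additional observation your argument is exactly the paper's proof.
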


\begin{proof}
	We set $z=(u,v)$, $\Gamma_\ell:=\Gamma_{\gamma_\ell}$ for $\ell\in\{1,2\}$, 
	\begin{equation*}
		A:=\left(\begin{matrix}
			d_1\Gamma_{1} & 0\\ 0& d_2\Gamma_{2}
			\end{matrix}\right) \in \mathcal{L}(X^2) \;\;\text{ and }\;\; 
		F(z):=\left(\begin{matrix}
				F_1(z) \\ F_2(z)
			\end{matrix}\right):=\left(\begin{matrix}
				-uv^2+f(1-u) \\ uv^2-(f+\kappa)v
			\end{matrix}\right)\,.
	\end{equation*}
	Given $z^0=(u^0,v^0)\in X^2$, the initial value problem~\eqref{CNLGS} is equivalent to
	\begin{equation}
		\frac{\mathrm{d}z}{\mathrm{d}t}=A z +F(z)\,, \quad t\ge 0\,,\qquad z(0)=z^0\,. \label{b6}
	\end{equation}
	Since $A\in \mathcal{L}(X^2)$ and $F\in C_b^{1-}(X^2,X^2)$, the initial value problem~\eqref{b6} has a unique solution
	\begin{equation*}
		z= z(\cdot;z^0)=(u,v)\in C^1([0,T_m),X^2)
\end{equation*} 
defined on a maximal time interval $[0,T_m)$ with $T_m\in (0,\infty]$ and $(t,z^0)\mapsto z(t;z^0)$ defines a semiflow on $X^2$. Moreover, if $T_m<\infty$, then
	\begin{equation}\label{global}
		\lim_{t\to T_m}\|z(t)\|_{X^2}=\infty\,.
	\end{equation}
	We next observe that, for every $R>0$, there is $\xi(R) := f + \max\{\kappa,R^2\}>0$ such that
	\begin{equation}
		F(z)+\xi(R)z\ge 0 \;\;\text{ for }\;\; z\ge 0 \;\;\text{ with }\;\; \|z\|_{X^2}\le R\,. \label{locpos}
	\end{equation}
	By~\eqref{b6}, we have the representation formula
	\begin{equation*}
		z(t) = e^{t(A-\xi(R))} z^0 + \int_0^t e^{(t-s)(A-\xi(R))} \big[ F(z(s)) + \xi(R) z(s) \big]\,\mathrm{d}s\,,
	\end{equation*}
	and hence deduce from~\eqref{b4} and~\eqref{locpos} that
	\begin{equation}
		z(t)=(u(t),v(t)) \in X^+\times X^+ \;\;\text{ for }\;\; t\in [0,T_m) \;\;\text{ whenever }\;\; z^0=(u^0,v^0) \in X^+\times X^+\,. \label{p1}
	\end{equation}
	
	In order to prove global existence for $(u^0,v^0)\in X^+\times X^+$, we note that $w:=u-1$ satisfies
	\begin{equation}\label{uw}
		\begin{split}
			\partial_t w & = d_1\Gamma_1 w - (1+w)v^2-fw \,,\qquad (t,x)\in [0,T_m)\times\Omega\,, \\  
			w(0) & =u^0-1 \,, \qquad x\in\Omega\,.
		\end{split}
	\end{equation}
	Thus, for $t\in [0,T_m)$, it follows from~\eqref{p1} that
	\begin{equation*}
		w(t) = e^{t(d_1\Gamma_1-f)} w(0) - \int_0^t e^{(t-s)(d_1\Gamma_1-f)} \big((1+w(s)) v(s)^2\big) \,\mathrm{d} s \le e^{t(d_1\Gamma_1-f)} w(0)\,.
	\end{equation*}
	Since $w(0)\le (\|u^0\|_\infty-1)_+$ we infer from~\eqref{b5} that
	\begin{equation*}
		w(t)\le e^{t(d_1\Gamma_1-f)} w(0)\le e^{-ft}\big(\|u^0\|_\infty-1\big)_+\,,\qquad t\in [0,T_m)\,.
	\end{equation*}
	Therefore,
	\begin{align}\label{APuu}
		0\le u(t,x)\le 1+e^{-ft}\big(\|u^0\|_\infty-1\big)_+\,, \qquad (t,x) \in [0,T_m)\times\Omega\,,
	\end{align}
	and in particular, 
	\begin{equation}\label{b8}
		\|u(t)\|_\infty\le\|u^0\|_\infty + 1\,,\quad t\in [0,T_m)\,.
	\end{equation} 
	
	Next, $h:=u+v$ solves
	\begin{align*}
		\partial_t h &=d_2\Gamma_2 h +(d_1 \Gamma_1 - d_2\Gamma_2) u -f h + f-\kappa v \;\;\text{ in }\;\; (0,T_m)\times\Omega\,, \\
		h(0) & = u^0 + v^0 \;\;\text{ in }\;\; \Omega\,,
	\end{align*}
	hence
	\begin{equation}
		h(t) = e^{t(d_2\Gamma_2-f)} h(0) + \int_0^t e^{(t-s)(d_2\Gamma_2-f)} \big[ (d_1\Gamma_1 - d_2 \Gamma_2) u(s) + f - \kappa v(s) \big]\,\mathrm{d}s \label{p2}
	\end{equation}
	for $t\in [0,T_m)$. According to~\eqref{gammaN}, \eqref{b8}, and Lemma~\ref{lem2.1}, we have
	\begin{align*}
		\|(d_1\Gamma_1 - d_2\Gamma_2)u(s)\|_\infty & \le |d_1-d_2| \|\Gamma_1 u(s)\|_\infty + d_2 \|(\Gamma_1-\Gamma_2)u(s)\|_\infty \\
		& \le 2 \gamma_\infty |d_1-d_2| \|u(s)\|_\infty \\
		& \qquad + d_2 \mathop{\mathrm{ess\, sup}}_{x\in\Omega} \left| \int_\Omega (\gamma_1-\gamma_2)(x,y) (u(s,y)-u(s,x))\,\mathrm{d}y \right| \\
		& \le 2 \gamma_\infty |d_1-d_2| \|u(s)\|_\infty + 2 m d_2 \|u(s)\|_\infty \\
		& \le 2 \big( |d_1-d_2| \gamma_\infty + m d_2 \big) (1+\|u^0\|_\infty)\,,
	\end{align*}
	where
	\begin{equation*}
		m := \mathop{\mathrm{ess\, sup}}_{x\in\Omega} \int_\Omega |(\gamma_1-\gamma_2)(x,y)| \,\mathrm{d}y \le 2 \gamma_\infty\,.
	\end{equation*}
	Combining~\eqref{p2}, the above estimate, the non-negativity of $u$, and Lemma~\ref{lem2.1} gives, for $t\in [0,T_m)$,
	\begin{align}
		0 \le h(t) & \le e^{-ft} \|u^0+v^0\|_\infty + \int_0^t e^{-f(t-s)} \left[ 2 \big( |d_1-d_2| \gamma_\infty + m d_2 \big) (1+\|u^0\|_\infty) + f \right]\,\mathrm{d}s \nonumber \\
		& \le e^{-tf} \|u^0+v^0\|_\infty + \frac{1-e^{-tf}}{f} \left[ 2 \big( |d_1-d_2|\,\gamma_\infty + m d_2 \big) (1+\|u^0\|_\infty)+f \right]\,. \label{b9}
	\end{align}
Owing to~\eqref{b8} and~\eqref{b9}, we conclude $T_m=\infty$ according to~\eqref{global}.
\end{proof}

The proof of Proposition~\ref{prop2.2} actually provides the boundedness of solutions to~\eqref{CNLGS}.

\begin{corollary}\label{cor2.3}
	Assume that $\gamma_\ell: \Omega\times\Omega\to [0,\infty)$ is a measurable function satisfying~\eqref{gammaN} for $\ell\in \{1,2\}$ and consider $(u^0,v^0)\in X^+\times X^+$. The corresponding solution $(u,v)$ to~\eqref{CNLGS} satisfies
	\begin{equation}\label{ubu}
		0\le u(t,x)\le 1+e^{-ft}\big(\|u^0\|_\infty-1\big)_+
	\end{equation}
	and
	\begin{equation}
		0\le (u+v)(t,x) \le e^{-tf} \|u^0+v^0\|_\infty + \frac{1-e^{-tf}}{f} \left[ 2 \big( |d_1-d_2|\,\gamma_\infty + m d_2 \big)(1+\|u^0\|_\infty) + f \right] \label{ubv}
	\end{equation}
	for $(t,x)\in [0,\infty)\times\Omega$, where
	\begin{equation*}
		m=\mathop{\mathrm{ess\ sup}}_{x\in\Omega} \int_\Omega |(\gamma_1 - \gamma_2)(x,y)| \,\mathrm{d} y \le 2\gamma_\infty\,.
	\end{equation*}
	In particular,
	\begin{equation*}
		\limsup_{t\to\infty}\|u(t)\|_\infty\le 1
	\end{equation*}
	and, if $\|u^0\|_\infty\le 1$, then $\|u(t)\|_\infty\le 1$ for $t\ge 0$.
\end{corollary}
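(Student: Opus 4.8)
The plan is to observe that Corollary~\ref{cor2.3} is an immediate consequence of the estimates already established in the course of proving Proposition~\ref{prop2.2}, as the remark preceding it announces. Indeed, the bounds~\eqref{ubu} and~\eqref{ubv} coincide verbatim with the estimates~\eqref{APuu} and~\eqref{b9}, which were derived there on the maximal interval of existence $[0,T_m)$. Since Proposition~\ref{prop2.2} guarantees that $T_m=\infty$ for any non-negative initial datum, these estimates are in fact valid for all $t\in[0,\infty)$, and the non-negativity of $u$ and of $u+v$ follows from~\eqref{p1}. Thus the first part of the corollary requires no new computation; it merely records the a priori bounds obtained earlier, now known to hold globally in time.

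For the asymptotic assertions, I would read them off directly from~\eqref{ubu}. Letting $t\to\infty$ in~\eqref{ubu}, the term $e^{-ft}(\|u^0\|_\infty-1)_+$ tends to zero, whence $\limsup_{t\to\infty}\|u(t)\|_\infty\le 1$. Finally, if $\|u^0\|_\infty\le 1$, then $(\|u^0\|_\infty-1)_+=0$ and~\eqref{ubu} reduces to $0\le u(t,x)\le 1$ for all $(t,x)\in[0,\infty)\times\Omega$, so that $\|u(t)\|_\infty\le 1$ for every $t\ge 0$.

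Since every ingredient is already present in the proof of Proposition~\ref{prop2.2}, I do not anticipate any genuine obstacle. The only point deserving care is the logical ordering: the estimates~\eqref{APuu} and~\eqref{b9} must first be upgraded from the maximal interval $[0,T_m)$ to $[0,\infty)$ by invoking the global existence conclusion $T_m=\infty$, and only then may one pass to the limit $t\to\infty$. Everything else is a direct transcription of inequalities already at hand.
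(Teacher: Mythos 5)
Your proposal is correct and matches the paper's own proof, which likewise just cites the estimates~\eqref{APuu} (equivalently~\eqref{b8}) and~\eqref{b9} from the proof of Proposition~\ref{prop2.2}, now valid on $[0,\infty)$ since $T_m=\infty$, and reads the asymptotic statements off~\eqref{ubu}. Your explicit remark about upgrading the bounds from $[0,T_m)$ to $[0,\infty)$ before letting $t\to\infty$ is exactly the (implicit) logical step in the paper.
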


\begin{proof}
	The bounds on $u$ and $u+v$ are established in~\eqref{b8} and~\eqref{b9}, respectively, while the last statement of Corollary~\ref{cor2.3} readily follows from~\eqref{ubu}.
\end{proof}

\begin{proof}[Proof of Theorem~\ref{thm1.1}]
	Gathering the outcome of Proposition~\ref{prop2.2} and Corollary~\ref{cor2.3} gives Theorem~\ref{thm1.1}.
\end{proof}

\section{Convergence to spatially homogeneous steady states: Proof of Theorem~\ref{thm1.2}}\label{sec3}

Let us begin with the local asymptotic stability of $(1,0)$. 

\begin{proposition}\label{prop3.1}
		Assume that $\gamma_\ell: \Omega\times\Omega\to [0,\infty)$ is a measurable function satisfying~\eqref{gammaN} for $\ell\in \{1,2\}$. Then the semi-trivial steady state $(1,0)$ is locally asymptotically stable in $X^2$.
\end{proposition}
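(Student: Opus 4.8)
The plan is to prove local asymptotic stability of the semi-trivial steady state $(1,0)$ via the principle of linearized stability, which applies here because the problem has the semilinear form $\dot z = Az + F(z)$ with $A\in\mathcal{L}(X^2)$ a bounded generator and $F\in C^{1-}_b(X^2,X^2)$ a locally Lipschitz (indeed smooth polynomial) nonlinearity. First I would compute the linearization of the right-hand side $Az+F(z)$ at the steady state $z^*=(1,0)$. Writing $u=1+w$ and $v=\eta$ and discarding quadratic and higher-order terms, the nonlinearity $F(z)=(-uv^2+f(1-u),\,uv^2-(f+\kappa)v)$ has Fr\'echet derivative at $(1,0)$ given by
\begin{equation*}
DF(1,0) = \begin{pmatrix} -f & 0 \\ 0 & -(f+\kappa) \end{pmatrix},
\end{equation*}
since $\partial_u(-uv^2+f(1-u))=-v^2-f$ and $\partial_v(-uv^2)=-2uv$ both reduce at $(1,0)$, and similarly for the second component. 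Hence the full linearized operator is the block-diagonal bounded operator
\begin{equation*}
\mathcal{A} := A + DF(1,0) = \begin{pmatrix} d_1\Gamma_1 - f & 0 \\ 0 & d_2\Gamma_2 - (f+\kappa) \end{pmatrix} \in \mathcal{L}(X^2).
\end{equation*}

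The key step is then to show that the spectrum of $\mathcal{A}$ lies in a left half-plane $\{\mathrm{Re}\,\lambda \le -\omega\}$ for some $\omega>0$, which for a bounded operator generating a uniformly continuous semigroup yields exponential decay of the linearized flow and, through linearized stability, local asymptotic stability of $z^*$. By the block-diagonal structure it suffices to bound the spectra of the two scalar operators $d_\ell\Gamma_\ell$ shifted by the constants. Here I would invoke Lemma~\ref{lem2.1}: since $\Gamma_\ell$ generates a contraction semigroup on $X$ (so $\|e^{t\Gamma_\ell}\|_{\mathcal{L}(X)}\le 1$), its spectral bound satisfies $s(\Gamma_\ell)\le 0$, whence $s(d_\ell\Gamma_\ell)\le 0$ as $d_\ell>0$. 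Consequently the spectral bounds of the diagonal blocks satisfy $s(d_1\Gamma_1-f)\le -f<0$ and $s(d_2\Gamma_2-(f+\kappa))\le -(f+\kappa)<0$. Taking $\omega := \min\{f,f+\kappa\}=f>0$ gives $s(\mathcal{A})\le -\omega<0$.

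Having placed the spectrum strictly in the open left half-plane, I would conclude by the standard principle of linearized stability for semilinear evolution equations with bounded generators (see, e.g., the results on quasilinear and semilinear parabolic problems, or directly the fact that for $\mathcal{A}\in\mathcal{L}(X^2)$ with $s(\mathcal{A})<0$ the uniformly continuous semigroup $e^{t\mathcal{A}}$ satisfies $\|e^{t\mathcal{A}}\|_{\mathcal{L}(X^2)}\le Me^{-\omega' t}$ for some $M\ge 1$ and $0<\omega'<\omega$). The local Lipschitz bound on $F$ near $z^*$, controlling $F(z)-DF(z^*)(z-z^*)$ by $C\|z-z^*\|_{X^2}^2$, then allows a Gronwall-type fixed-point argument on the Duhamel formula to show that solutions starting close to $z^*$ decay exponentially to it, giving local asymptotic stability in $X^2$.

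The main obstacle I anticipate is not the linearization itself but the spectral estimate for the nonlocal operators $d_\ell\Gamma_\ell$: unlike the Laplacian, $\Gamma_\ell$ need not be self-adjoint (the kernel $\gamma_\ell$ is only assumed to satisfy the marginal-mass condition~\eqref{gammaN}, not symmetry), so I cannot appeal to a spectral decomposition. The correct tool is precisely the contraction-semigroup bound~\eqref{b3} from Lemma~\ref{lem2.1}, which forces $s(\Gamma_\ell)\le 0$ independently of symmetry via the relation between the growth bound of a uniformly continuous semigroup and its generator. The strict negativity of $s(\mathcal{A})$ is then \emph{gained for free} from the positive constants $f$ and $f+\kappa$ subtracted on each diagonal block, so no fine spectral information about $\Gamma_\ell$ beyond $s(\Gamma_\ell)\le 0$ is required.
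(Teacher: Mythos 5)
Your proposal is correct and takes essentially the same approach as the paper: both linearize at $(1,0)$, identify the linearized operator $A_0 = \mathrm{diag}\big(d_1\Gamma_{\gamma_1}-f,\ d_2\Gamma_{\gamma_2}-(f+\kappa)\big)\in\mathcal{L}(X^2)$, deduce exponential decay of $e^{tA_0}$ from the contraction property in Lemma~\ref{lem2.1}, and conclude via the principle of linearized stability applied to the quadratic remainder. The only difference is cosmetic: the paper obtains $\|e^{tA_0}\|_{\mathcal{L}(X^2)}\le e^{-ft}$ directly from $e^{t(d_\ell\Gamma_{\gamma_\ell}-c)}=e^{-ct}e^{t d_\ell\Gamma_{\gamma_\ell}}$ together with \eqref{b3}, while you detour through the spectral bound $s(A_0)\le -f$ and the spectral mapping theorem for bounded generators, which is valid but only yields the slightly weaker estimate $Me^{-\omega' t}$ with some $M\ge 1$.
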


\begin{proof}
	Set $w=u-1$. Then \eqref{CNLGS} is equivalent to
	\begin{subequations}\label{GSw}
		\begin{align}
			\partial_t w&=d_1\Gamma_{\gamma_1} w-(1+w)v^2-fw \,,  &&w(0)=u^0-1\,, \\
			\partial_t v&=d_2\Gamma_{\gamma_2} v+(1+w)v^2-(f+\kappa)v \,, && v(0)=v^0 \,, 
		\end{align}
	\end{subequations}
	so that, with $\zeta=(w,v)$,
	\begin{equation*}
		\zeta(t)=e^{tA_0}\zeta^0+\int_0^t e^{(t-s)A_0} R(\zeta(s))\,\mathrm{d} s\,,\quad t\ge 0\,,
	\end{equation*}
	where
	\begin{equation*}
		A_0:=\left(\begin{matrix}
		d_1 \Gamma_{\gamma_1} -f & 0\\ 0& d_2 \Gamma_{\gamma_2}-(f+\kappa)
		\end{matrix}\right) \in \mathcal{L}(X^2)\,,\qquad \|e^{tA_0}\|_{\ml(X^2)}\le e^{-ft}\,,\quad t\ge 0\,,
	\end{equation*}
	according to Lemma~\ref{lem2.1} and $R(\zeta)=o(\|\zeta\|_{X^2})$ as $\zeta\to 0$. The principle of linearized stability then ensures that $(w,v)=(0,0)$ is locally asymptotically stable in $X^2$ for~\eqref{GSw}, see \cite[Theorem~15.3]{Aman1990} for instance.
\end{proof}

\begin{remark}\label{rem3.2}
	If $\gamma_\ell\in C(\bar{\Omega}\times\bar{\Omega})$ for $\ell\in \{1,2\}$, then the semi-trivial steady state $(1,0)$ is also locally asymptotically stable in $C(\bar{\Omega},\mathbb{R}^2)$.
\end{remark}

We next finish the proof of Theorem~\ref{thm1.2} by providing a more quantitative stability result for the semi-trivial steady state~$(1,0)$ and actually identify an invariant set for the flow associated with~\eqref{CNLGS} in $X^+\times X^+$.

\begin{proof}[Proof of Theorem~\ref{thm1.2}]
We have already shown in Proposition~\ref{prop3.1} the local stability of the semi-trivial steady state~$(1,0)$. We next refine this local stability result when the initial conditions satisfy~\eqref{init}. To this end, we first recall from~Corollary~\ref{cor2.3}  that $\|u(t)\|_\infty\le 1+\delta$ for $t\ge 0$. Owing to~\eqref{init}, we may choose $\varepsilon>0$ such that 
	\begin{equation*}
		\|v^0\|_\infty<\frac{f+\kappa-\varepsilon}{1+\delta}
	\end{equation*}
	and the time continuity of~$v$ implies that
	\begin{equation*}
		t_0 := \inf\left\{  t\ge 0\ :\ \|v(t)\|_\infty\ge \frac{f+\kappa-\varepsilon}{1+\delta} \right\}>0\,.
	\end{equation*}
	Then
	\begin{align}\label{APu2w}
		F_2(u,v)(s)=u(s)v(s)^2-(f+\kappa)v(s)\le (u(s)v(s)-f-\kappa)v(s)\le -\varepsilon v(s)\,,\quad s\in [0,t_0)\,,
	\end{align}
	so that, thanks to~\eqref{vNLGS} and Lemma~\ref{lem2.1},
	\begin{equation*}
		v(t)=e^{t d_2\Gamma_{\gamma_2}}v^0+\int_0^te^{(t-s) d_2\Gamma_{\gamma_2}}F_2(u(s),v(s))\,\mathrm{d} s\le e^{t d_2\Gamma_{\gamma_2}}v^0\,,\quad t\in [0,t_0)\,.
	\end{equation*}
	We now infer from Lemma~\ref{lem2.1} and the above inequality that
	\begin{equation*}
		\|v(t)\|_\infty\le \|v^0\|_\infty<\frac{f+\kappa-\varepsilon}{1+\delta}\,,\quad t\in [0,t_0)\,,
	\end{equation*}
	and therefore $t_0=\infty$; that is,
	\begin{equation*}
		\|v(t)\|_\infty\le \|v^0\|_\infty<\frac{f+\kappa-\varepsilon}{1+\delta}\,,\quad t\ge 0\,.
	\end{equation*}
	Using again~\eqref{vNLGS} and~\eqref{APu2w}, we deduce  
	\begin{equation*}
		\partial_t v=d_2\Gamma_{\gamma_2} v+uv^2-(f+\kappa)v\le d_2\Gamma_{\gamma_2} v-\varepsilon v\,,\qquad t\ge 0\,,
	\end{equation*}
	and  Lemma~\ref{lem2.1} ensures that
	\begin{equation}
		\|v(t)\|_\infty\le e^{-\varepsilon t}\,\|v^0\|_\infty\,, \qquad t\ge 0\,. \label{p7}
	\end{equation}
	In particular,
	\begin{equation*}
		\lim_{t\to\infty} \|v(t)\|_\infty = 0\,.
	\end{equation*}
	Furthermore, it follows from~\eqref{ubu} and~\eqref{p7} that, for every $\eta>0$, there is $t_\eta>0$ such that 
	\begin{equation*}
		0\le u(s,x))v(s,x)^2\le \eta\,,\qquad (s,x)\in (t_\eta,\infty)\times\Omega\,.
	\end{equation*}
	Consequently, invoking~\eqref{uNLGS} and Lemma~\ref{lem2.1},
	\begin{align*}
		u(t+t_\eta) - 1 &=e^{t(d_1\Gamma_{\gamma_1}-f)} \big[ u(t_\eta) - 1 \big] - \int_{t_\eta}^{t+t_\eta} e^{(t+t_\eta-s)(d_1\Gamma_{\gamma_1}-f)} \big(u(s)v(s)^2\big)\mathrm{d} s\\ 
		&\ge - \left\| e^{t(d_1\Gamma_{\gamma_1}-f)} \big[ u(t_\eta) - 1 \big] \right\|_\infty -\int_{t_\eta}^{t+t_\eta} \|e^{(t+t_\eta-s)(d_1\Gamma_{\gamma_1}-f)}\eta\|_\infty\, \mathrm{d} s\\
		&\ge -e^{-ft} \left\| u(t_\eta) - 1  \right\|_\infty -\frac{1-e^{-ft}}{f}\eta
	\end{align*}
	for $t\ge 0$. We may then let $t\to\infty$ and conclude that 
	\begin{equation*}
		\liminf_{t\to\infty} \mathop{\mathrm{ess\ inf}}_{x\in\Omega} \{u(t,x)-1\} = \liminf_{t\to\infty} \mathop{\mathrm{ess\ inf}}_{x\in\Omega} \{u(t+t_\eta,x)-1\}  \ge - \frac{\eta}{f}\,.
	\end{equation*}
	Since $\eta>0$ was arbitrary and since
	\begin{equation*}
		\mathop{\mathrm{ess\ sup}}_{x\in\Omega} \{u(t,x)-1\} \le  e^{-ft}\big(\|u^0\|_\infty-1\big)_+\,,\quad t\ge 0\,,
	\end{equation*}
	by Corollary~\ref{cor2.3}, we end up with
	\begin{equation*}
		\lim_{t\to\infty} \|u(t)-1\|_\infty= 0\,,
	\end{equation*}
	which completes the proof.
\end{proof}

\begin{remark}\label{rem3.3}
	Assume that $\gamma_\ell: \Omega\times\Omega\to [0,\infty)$ is a measurable function satisfying~\eqref{gammaN} for $\ell\in \{1,2\}$ and consider $(u^0,v^0)\in X^+\times X^+$ such that 
	\begin{equation*} 
		\|u^0\|_\infty\le 1+\delta \;\;\text{ and }\;\; \|v^0\|_\infty\le \frac{f+\kappa}{1+\delta}
	\end{equation*} 
	for some $\delta\ge 0$. Then the solution~$(u,v)$ to~\eqref{CNLGS} satisfies
	\begin{equation*}
		\|u(t)\|_\infty\le 1+\delta\,,\qquad \|v(t)\|_\infty\le \|v^0\|_\infty\,, \qquad t\ge 0\,.
	\end{equation*}
	Indeed, this property follows from applying Theorem~\ref{thm1.2} to the initial value $(u^0,\theta v^0)$ with $\theta\in (0,1)$ and then using the continuous dependence of the solution on the initial value to let~$\theta\to 1$.
\end{remark}

\section{Diffusive limit: Proof of Theorem~\ref{thm1.3}}\label{sec4}

This section is devoted to a mathematical proof of the connection between the Gray-Scott model~\eqref{GS} and its nonlocal version~\eqref{CNLGS} with the suitable choice of nonlocal operators described in Theorem~\ref{thm1.3}. We assume that $n\ge 2$ and consider a non-negative radially symmetric function  $\varphi\in C_0^\infty(\mathbb{R}^n)$ with compact support in $B_1(0)$ and a non-increasing radial profile. For each integer $j\ge 1$, we define $\chi_{j}: \Omega\times\Omega\to [0,\infty)$ by
\begin{equation}
	\chi_{j}(x,y) := j^{n+2} \varphi(j(x-y))\,, \qquad (x,y)\in \Omega\times\Omega\,, \label{d1000}
\end{equation}
and, for $z\in X = L_\infty(\Omega)$, 
\begin{equation}
	\Gamma_{\chi_j} z(x) := \int_\Omega \chi_{j}(x,y) \big(z(y)-z(x)\big)\,\mathrm{d}y\,, \qquad x\in\Omega\,. \label{d100}
\end{equation}
We also set
\begin{equation}
	Y_j[z] := \int_{\Omega\times\Omega} \chi_{j}(x,y) [z(x)-z(y)]^2\,\mathrm{d}(x,y)\,, \qquad z\in L_2(\Omega)\,. \label{OpAux}
\end{equation}

We fix $(u^0,v^0)\in X^+\times X^+$ and denote the corresponding solution to~\eqref{CjNLGS} by
\begin{equation*}
	(u_j,v_j)\in C^1([0,\infty),X^+\times X^+)\,.
\end{equation*}
The first step of the proof is the derivation of several estimates which do not depend on $j\ge 1$. Throughout this section, $C$ and $C_i$, $i\ge 0$, denote positive constants which only depend on $n$, $\Omega$, $d_1$, $d_2$, $\varphi$, $f$, $\kappa$, and $(u^0,v^0)$. Dependence upon additional parameters will be indicated explicitly.

\subsection{Estimates}\label{sec4.1}

Recall that, for $j\ge 1$, 
\begin{equation}
	0 \le u_j(t,x) \le 1 + e^{-ft} \big( \|u^0\|_\infty -1 \big)_+ \le C_0 := 1 + \|u^0\|_\infty\,, \qquad t\ge 0\,,  \label{d3}
\end{equation}
by Corollary~\ref{cor2.3}. We next proceed as in \cite[Lemma~2]{You2008} to derive $L_2$-estimates on $(u_j,v_j)$.

\begin{lemma}\label{lem4.1}
	For $t>0$ and $j\ge 1$, 
	\begin{subequations}\label{d60}
	\begin{equation}
		\|u_j(t)\|_2^2 + \|v_j(t)\|_2^2 \le C_1\label{d6a}
	\end{equation}
	and
	\begin{equation}
		\int_0^t \left( Y_j[u_j(s)] + Y_j[v_j(s)] + \|(u_j v_j)(s)\|_2^2 \right)\,\mathrm{d}s \le C_1 (1+t)\,. \label{d6b} 
	\end{equation}
\end{subequations}
\end{lemma}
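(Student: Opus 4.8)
The plan is to derive the two estimates in Lemma~\ref{lem4.1} by testing the equations \eqref{ujNLGS}--\eqref{vjNLGS} against $u_j$ and $v_j$ respectively in $L_2(\Omega)$, mimicking the strategy of \cite{You2008}. The crucial point is that the quadratic form associated with the nonlocal operator is \emph{dissipative}: for any $z\in L_2(\Omega)$, one computes, using the symmetry assumption~\eqref{gammaN} (here with $\gamma_\ell = \chi_j$, which is symmetric) and a standard symmetrization of the double integral,
\begin{equation*}
	\int_\Omega z(x)\, \Gamma_{\chi_j} z(x)\,\mathrm{d}x = -\frac{1}{2} \int_{\Omega\times\Omega} \chi_j(x,y) [z(x)-z(y)]^2 \,\mathrm{d}(x,y) = -\frac{1}{2} Y_j[z]\,.
\end{equation*}
This identity is what will produce the good (negative) dissipation terms $-d_1 Y_j[u_j]$ and $-d_2 Y_j[v_j]$ after integration, and it is the nonlocal analogue of the Dirichlet energy $-\int |\nabla z|^2$. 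I would record it first as a preliminary computation.

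**Next I would** test \eqref{vjNLGS} with $v_j$ to get
\begin{equation*}
	\frac{1}{2}\frac{\mathrm{d}}{\mathrm{d}t}\|v_j\|_2^2 + \frac{d_2}{2} Y_j[v_j] + (f+\kappa)\|v_j\|_2^2 = \int_\Omega u_j v_j^3\,\mathrm{d}x\,,
\end{equation*}
and similarly combine the $u_j$-equation. The right-hand side contains the troublesome cubic term $\int u_j v_j^3$; following \cite{You2008}, the standard device is to work with a suitable \emph{linear combination} $u_j + v_j$ (or $u_j + \lambda v_j$) whose reaction terms partially cancel: adding \eqref{ujNLGS} and \eqref{vjNLGS} removes the $\pm u_j v_j^2$ terms at the level of the mass balance, so testing the sum and exploiting \eqref{d3} controls the production. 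Concretely, I would use the $L_\infty$ bound $0\le u_j\le C_0$ from \eqref{d3} to dominate $\int u_j v_j^3 \le C_0 \int v_j^3$, and then absorb $\int v_j^3$ either by a cancellation coming from the $u_j v_j^2$ term in the $u_j$-equation or by the linear dissipation $(f+\kappa)\|v_j\|_2^2$ together with a Gronwall argument; the precise combination is exactly the content of \cite[Lemma~2]{You2008}, which I would adapt. The key inequalities to track are that $\int_\Omega (u_j v_j)^2\,\mathrm{d}x \le C_0^2 \int_\Omega v_j^2\,\mathrm{d}x$ is \emph{not} immediately sufficient, so one genuinely needs the reaction cancellation to close the estimate for $\|(u_j v_j)\|_2^2$ in \eqref{d6b}.

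**For \eqref{d6a}** the outcome of the combined energy inequality will be a differential inequality of the form $\frac{\mathrm{d}}{\mathrm{d}t}(\|u_j\|_2^2 + \|v_j\|_2^2) \le C - c(\|u_j\|_2^2 + \|v_j\|_2^2)$ (after discarding the nonnegative dissipation terms), whose Gronwall integration yields the uniform-in-$t$, uniform-in-$j$ bound $C_1$; the $f(1-u_j)$ source contributes a constant $f|\Omega|$ and is harmless. **For \eqref{d6b}**, integrating the \emph{same} energy identity over $[0,t]$ but this time \emph{keeping} the dissipation terms on the left produces $\int_0^t (d_1 Y_j[u_j] + d_2 Y_j[v_j])\,\mathrm{d}s \le C_1(1+t)$, with the production integral $\int_0^t \|(u_j v_j)\|_2^2\,\mathrm{d}s$ being bounded either as a byproduct of the same cancellation or by a separate testing of \eqref{vjNLGS} against $u_j v_j^2$ and using \eqref{d3}. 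All constants depend only on the allowed quantities since the sole input beyond the reaction coefficients is the uniform $L_\infty$ bound $C_0$ and $|\Omega|$.

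**The main obstacle** I anticipate is controlling the cubic production term $\int_\Omega u_j v_j^3\,\mathrm{d}x$ in a way that is genuinely uniform in $j$: the nonlocal dissipation $Y_j[v_j]$ does \emph{not} give an $L_2$-gradient bound and hence no Sobolev/Gagliardo--Nirenberg interpolation is available to absorb a higher-order term, in contrast to the local case. The resolution must rely entirely on the algebraic mass-balance cancellation of the Gray--Scott nonlinearities combined with the $L_\infty$ bound \eqref{d3} on $u_j$, rather than on any smoothing from the operator; getting the bookkeeping of this cancellation right, so that $\|(u_j v_j)\|_2^2$ and both $Y_j$ terms are simultaneously controlled with constants independent of $j$, is the delicate part.
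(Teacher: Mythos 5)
Your overall strategy---the symmetrization identity $\int_\Omega z\,\Gamma_{\chi_j}z\,\mathrm{d}x=-\tfrac12 Y_j[z]$, the $L_\infty$ bound \eqref{d3} on $u_j$, the passage to the sum $h_j=u_j+v_j$ so that the reaction terms cancel, and a final Gronwall argument---is exactly the paper's (both follow \cite{You2008}). However, the proposal defers the decisive steps to ``adapting'' \cite{You2008}, and where you hedge between alternatives, some of them would fail. For the cubic term: your fallback of absorbing $C_0\int_\Omega v_j^3\,\mathrm{d}x$ into the linear dissipation $(f+\kappa)\|v_j\|_2^2$ via Gronwall cannot work, since there is no a priori $L_\infty$ or $L_3$ control on $v_j$ and a cubic quantity is not dominated by a quadratic one; nor is there any direct cancellation between $\int u_j^2v_j^2$ (from testing the $u_j$-equation with $u_j$) and $\int u_jv_j^3$ (from testing the $v_j$-equation with $v_j$). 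The only working route is the one you mention only in passing: never test the $v_j$-equation with $v_j$ at all, and run the energy estimate on $h_j$, where no cubic term ever appears. Similarly, for $\|(u_jv_j)\|_2^2$ in \eqref{d6b}, your suggestion to test \eqref{vjNLGS} against $u_jv_j^2$ produces neither a time derivative nor a signed nonlocal term and goes nowhere; the correct mechanism is that testing \eqref{ujNLGS} against $u_j$ yields $\int_\Omega u_j\cdot(u_jv_j^2)\,\mathrm{d}x=\|u_jv_j\|_2^2$ with a favorable sign, so this quantity sits on the dissipative side for free. (Also, once \eqref{d6a} is known, the pointwise bound $\|u_jv_j\|_2^2\le C_0^2\|v_j\|_2^2\le C_0^2C_1$ would already close that part of \eqref{d6b}, so your claim that the cancellation is ``genuinely needed'' for this specific term is off.)

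The more serious omission is the term that makes the $h_j$-computation delicate: since $d_1\ne d_2$ in general, the equation for $h_j$ is $\partial_t h_j=d_2\Gamma_{\chi_j}h_j+(d_1-d_2)\Gamma_{\chi_j}u_j+f(1-h_j)-\kappa v_j$, and testing with $h_j$ produces the cross term $(d_1-d_2)\int_\Omega h_j\,\Gamma_{\chi_j}u_j\,\mathrm{d}x$, which you never mention. It must be symmetrized and split by Young's inequality into $\tfrac{d_2}{4}Y_j[h_j]+\tfrac{(d_1-d_2)^2}{4d_2}Y_j[u_j]$, and the second piece can only be absorbed by adding the $u_j$-estimate with a sufficiently small weight: the paper considers $\|u_j\|_2^2+C_2\|h_j\|_2^2$ with $C_2=d_1d_2/[1+(d_1-d_2)^2]$, precisely so that $C_2(d_1-d_2)^2/2d_2\le d_1/2$ and the $Y_j[u_j]$ production is dominated by the $Y_j[u_j]$ dissipation coming from the $u_j$-equation. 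Without this weighted combination, the $h_j$-inequality has an uncontrolled $Y_j[u_j]$ on its right-hand side and neither \eqref{d6a} nor \eqref{d6b} follows; with it, \eqref{d6a} is Gronwall and \eqref{d6b} is time integration of the same inequality, exactly as you sketch.
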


\begin{proof}
	Let $t\ge 0$ and $j\ge 1$. We first infer from~\eqref{ujNLGS}, \eqref{d100}, and the non-negativity of $u_j$ that
	\begin{align*}
		\frac{1}{2} \frac{\mathrm{d}}{\mathrm{d}t} \|u_j(t)\|_2^2 & + d_1 \int_{\Omega\times\Omega} \chi_{j}(x,y) u_j(t,x) [u_j(t,x)-u_j(t,y)]\,\mathrm{d}(x,y) \\
		& + \|(u_j v_j)(t)\|_2^2 + f \|u_j(t)\|_2^2 = f \|u_j(t)\|_1\,.
	\end{align*}
	Since
	\begin{align*}
		& \int_{\Omega\times\Omega} \chi_{j}(x,y) u_j(t,x) [u_j(t,x)-u_j(t,y)]\,\mathrm{d}(x,y) \\
		& \qquad = \frac{1}{2} \int_{\Omega\times\Omega} \chi_{j}(x,y) u_j(t,x) [u_j(t,x)-u_j(t,y)]\,\mathrm{d}(x,y) \\
		& \qquad\qquad + \frac{1}{2} \int_{\Omega\times\Omega} \chi_{j}(y,x) u_j(t,y) [u_j(t,y)-u_j(t,x)]\,\mathrm{d}(x,y) \\
		& \qquad = \frac{Y_j[u_j(t)]}{2} 
	\end{align*}
	by the symmetry of $\chi_{j}$ and 
	\begin{equation*}
		f \|u_j(t)\|_1 \le \frac{f}{2} \|u_j(t)\|_2^2 + \frac{f}{2} |\Omega|
	\end{equation*}
	by Young's inequality, we conclude that
	\begin{equation}
		\frac{\mathrm{d}}{\mathrm{d}t} \|u_j(t)\|_2^2 + d_1 Y_j[u_j(t)] + 2 \|(u_j v_j)(t)\|_2^2 + f \|u_j(t)\|_2^2 \le f |\Omega|\,. \label{d4}
	\end{equation}
	We next recall that $h_j := u_j+v_j$ solves
	\begin{equation}
		\partial_t h_j = d_2 \Gamma_{\chi_j}h_j + (d_1-d_2) \Gamma_{\chi_j}u_j + f(1-h_j) - \kappa v_j \;\;\text{ in }\;\; (0,\infty)\times\Omega\,. \label{d101}
	\end{equation}
	Then, thanks to the non-negativity of $\kappa v_j$, 
	\begin{align*}
		& \frac{1}{2} \frac{\mathrm{d}}{\mathrm{d}t} \|h_j(t)\|_2^2 + d_2 \int_{\Omega\times\Omega} \chi_{j}(x,y) h_j(t,x) [h_j(t,x)-h_j(t,y)]\,\mathrm{d}(x,y) + f \|h_j(t)\|_2^2 \\
		& \qquad \le (d_1-d_2) \int_{\Omega\times\Omega} \chi_{j}(x,y) h_j(t,x) [u_j(t,y)-u_j(t,x)]\,\mathrm{d}(x,y) + f \|h_j(t)\|_1\,.
	\end{align*}
	We again use the symmetry of $\chi_{j}$, along with Young's inequality, to deduce that
	\begin{equation*}
		d_2 \int_{\Omega\times\Omega} \chi_{j}(x,y) h_j(t,x) [h_j(t,x)-h_j(t,y)]\,\mathrm{d}(x,y) = \frac{d_2}{2} Y_j[h_j(t)]
	\end{equation*}
	and
	\begin{align*}
		& (d_1-d_2) \int_{\Omega\times\Omega} \chi_{j}(x,y) h_j(t,x) [u_j(t,y)-u_j(t,x)]\,\mathrm{d}(x,y) \\
		& \qquad = \frac{d_2-d_1}{2} \int_{\Omega\times\Omega} \chi_{j}(x,y) [h_j(t,x) - h_j(t,y)] [u_j(t,x)-u_j(t,y)]\,\mathrm{d}(x,y) \\
		& \qquad \le \frac{d_2}{4} Y_j[h_j(t)]+ \frac{(d_1-d_2)^2}{4d_2} Y_j[u_j(t)]\,.
	\end{align*}
	Also,
	\begin{equation*}
		f \|h_j(t)\|_1 \le \frac{f}{2} \|h_j(t)\|_2^2 + \frac{f}{2} |\Omega|\,.
	\end{equation*}
	Gathering the above estimates leads us to 
	\begin{equation}
		\frac{\mathrm{d}}{\mathrm{d}t} \|h_j(t)\|_2^2 + \frac{d_2}{2} Y_j[h_j(t)] + f \|h_j(t)\|_2^2 \le \frac{(d_1-d_2)^2}{2d_2} Y_j[u_j(t)] + f |\Omega|\,. \label{d5}
	\end{equation}
	Setting $C_2 := (d_1 d_2)/[1 +(d_1-d_2)^2]$, we infer from~\eqref{d4} and~\eqref{d5} that
	\begin{align*}
		& \frac{\mathrm{d}}{\mathrm{d}t} \left[ \|u_j(t)\|_2^2 + C_2\|h_j(t)\|_2^2 \right] + \left( d_1 - \frac{C_2 (d_1-d_2)^2}{2 d_2} \right) Y_j[u_j(t)] + \frac{d_2 C_2}{2} Y_j[h_j(t)] \\
		& \qquad + 2 \|(u_j v_j)(t)\|_2^2 + f \left( \|u_j(t)\|_2^2 + C_2 \|h_j(t)\|_2^2 \right) \le f (1+C_2) |\Omega|\,.
	\end{align*}
	Hence,
	\begin{equation}
		\begin{split}
			& \frac{\mathrm{d}}{\mathrm{d}t} \left[ \|u_j(t)\|_2^2 + C_2\|h_j(t)\|_2^2 \right] + f \left(  \|u_j(t)\|_2^2 + C_2 \|h_j(t)\|_2^2 \right) \\
			& \qquad\qquad + \frac{d_1}{2} Y_j[u_j(t)]+ \frac{d_2 C_2}{2} Y_j[h_j(t)] + 2 \|(u_j v_j)(t)\|_2^2 \le f (1+C_2) |\Omega|\,.
		\end{split}\label{d6}
	\end{equation}
	A first consequence of~\eqref{d6} is that
	\begin{equation*}
		\|u_j(t)\|_2^2 + C_2\|h_j(t)\|_2^2 \le e^{-ft} \left[ \|u^0\|_2^2 + C_2\|u^0+v^0\|_2^2 \right] + (1+C_2) |\Omega| \big( 1 - e^{-ft} \big)\,,
	\end{equation*}
	from which~\eqref{d6a} readily follows, since $v_j=h_j-u_j$. We next integrate~\eqref{d6} with respect to time to obtain~\eqref{d6b}, using again the relation $v_j = h_j-u_j$, and complete the proof.
\end{proof}

We now turn to estimates on $\partial_t u_j$ and $\partial_t v_j$.

\begin{lemma}\label{lem4.2}
	For $t>0$ and $j\ge 1$, 
	\begin{equation}
		\int_0^t \left[ \|\partial_t u_j(s)\|_{(W_{n+1}^1)'}^2 + \|\partial_t v_j(s)\|_{(W_{n+1}^1)'}^2 \right]\,\mathrm{d}s \le C_3(1+t)\,. \label{d7}
	\end{equation}
\end{lemma}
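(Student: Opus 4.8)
The plan is to estimate the dual norm $\|\partial_t u_j(s)\|_{(W_{n+1}^1)'}$ by testing the equation~\eqref{ujNLGS} against an arbitrary $\vartheta\in W_{n+1}^1(\Omega)$ and controlling each resulting term uniformly in $j$. The two contributions are the nonlocal diffusion $d_1\Gamma_{\chi_j}u_j$ and the reaction $F_1(u_j,v_j)=-u_jv_j^2+f(1-u_j)$. The reaction part is straightforward: by~\eqref{d3} we have $0\le u_j\le C_0$, so $\|f(1-u_j)\|_1\le C$ and $\|u_jv_j^2\|_1\le C_0\|v_j\|_2\,\|(u_jv_j)\|_2/C_0^{1/2}$ (or more simply $\|u_jv_j^2\|_1\le\|u_j\|_\infty\|v_j\|_2^2$ via~\eqref{d6a}), which after integration in time is bounded by $C(1+t)$ using~\eqref{d60}. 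Since $W_{n+1}^1(\Omega)\hookrightarrow L_\infty(\Omega)$ for $n\ge 2$ by Sobolev embedding (here $\Omega$ is smooth by the hypotheses of Theorem~\ref{thm1.3}), testing these $L_1$ terms against $\vartheta$ is controlled by $\|\vartheta\|_{W_{n+1}^1}$.

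The main obstacle is the nonlocal term. First I would symmetrize: for $\vartheta\in W_{n+1}^1(\Omega)$,
\begin{equation*}
	\int_\Omega \Gamma_{\chi_j}u_j(x)\,\vartheta(x)\,\mathrm{d}x = -\frac12\int_{\Omega\times\Omega}\chi_j(x,y)[u_j(x)-u_j(y)][\vartheta(x)-\vartheta(y)]\,\mathrm{d}(x,y)\,,
\end{equation*}
using the symmetry of $\chi_j$. Applying Cauchy-Schwarz on the measure $\chi_j\,\mathrm{d}(x,y)$ yields the bound
\begin{equation*}
	\left|\int_\Omega \Gamma_{\chi_j}u_j\,\vartheta\,\mathrm{d}x\right| \le \frac12\, Y_j[u_j]^{1/2}\, Y_j[\vartheta]^{1/2}\,.
\end{equation*}
The factor $Y_j[u_j]^{1/2}$ is exactly what is controlled in $L_2$ in time by~\eqref{d6b}, so everything hinges on an estimate $Y_j[\vartheta]\le C\|\vartheta\|_{W_{n+1}^1}^2$ that is \emph{uniform in $j$}. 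This is where the specific scaling $\chi_j(x,y)=j^{n+2}\varphi(j(x-y))$ and the smoothness of $\vartheta$ must combine: writing $\vartheta(x)-\vartheta(y)=\int_0^1\nabla\vartheta(y+\tau(x-y))\cdot(x-y)\,\mathrm{d}\tau$, one gets $[\vartheta(x)-\vartheta(y)]^2\le|x-y|^2\int_0^1|\nabla\vartheta(y+\tau(x-y))|^2\,\mathrm{d}\tau$, and the change of variables $y\mapsto h=j(x-y)$ turns the $j^{n+2}|x-y|^2$ weight into an $O(1)$ factor times $\int|h|^2\varphi(h)\,\mathrm{d}h=m_2$, leaving $Y_j[\vartheta]\le m_2\|\nabla\vartheta\|_2^2\le C\|\vartheta\|_{W_{n+1}^1}^2$. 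Some care is needed at the boundary because $y+\tau(x-y)$ may leave $\Omega$; this is handled by extending $\vartheta$ to $W^1_{n+1}(\RR^n)$ using the $C^{2+\alpha}$-regularity of $\partial\Omega$ and the compact support of $\varphi$ (so that only $|x-y|\le 1/j\le 1$ contributes), at the cost of enlarging the constant.

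Combining these pieces gives, for each $s$,
\begin{equation*}
	\|\partial_t u_j(s)\|_{(W_{n+1}^1)'}\le C\big(1 + \|u_j(s)\|_\infty\|v_j(s)\|_2^2 + Y_j[u_j(s)]^{1/2}\big)\,,
\end{equation*}
and squaring, integrating in time, and invoking~\eqref{d3} and~\eqref{d60} produces the bound $C_3(1+t)$; the estimate for $\partial_t v_j$ is identical, using $F_2=u_jv_j^2-(f+\kappa)v_j$ together with~\eqref{d6a} to control $\|v_j\|_2$ and $\|u_jv_j^2\|_1$. I expect the uniform bound $Y_j[\vartheta]\le C\|\vartheta\|_{W_{n+1}^1}^2$ to be the crux, since it is precisely the place where the $j^{n+2}$ scaling (rather than the $j^n$ normalization that would merely keep $\Gamma_{\chi_j}$ bounded on $L_\infty$) is indispensable.
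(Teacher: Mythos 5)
Your proposal is correct and follows the same strategy as the paper's proof: test against $\vartheta\in W_{n+1}^1(\Omega)\hookrightarrow L_\infty(\Omega)$, control the reaction terms via \eqref{d3} and \eqref{d6a}, symmetrize the nonlocal term, apply Cauchy--Schwarz to produce $Y_j[u_j]^{1/2}$, and reduce everything to a $j$-uniform bound on $\int_{\Omega\times\Omega}\chi_j(x,y)[\vartheta(x)-\vartheta(y)]^2\,\mathrm{d}(x,y)$, which is then integrated in time using \eqref{d6b}. The one substantive difference lies in how that uniform bound is obtained: the paper uses the support property $\mathrm{supp}\,\chi_j\subset\{|x-y|\le C/j\}$ to insert the factor $|x-y|^{-2}$ and then cites \cite[Theorem~1]{BBM2001}, whereas you re-derive the estimate $Y_j[\vartheta]\le C m_2\|\vartheta\|_{W_{n+1}^1}^2$ from scratch via the line-integral representation, a Sobolev extension to a bounded neighbourhood of $\Omega$ (legitimate here thanks to the $C^{2+\alpha}$ boundary), and the change of variables $h=j(x-y)$ --- essentially a self-contained proof of the BBM upper bound in the case at hand, which is sound and arguably more transparent about where the $j^{n+2}$ scaling enters. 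A second, minor deviation: you estimate $\partial_t v_j$ directly from \eqref{vjNLGS} using the bound on $Y_j[v_j]$ in \eqref{d6b}, while the paper estimates $\partial_t(u_j+v_j)$ through the equation \eqref{d101} for $h_j=u_j+v_j$ and subtracts; both routes work with the estimates available.
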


\begin{proof}
Since $W_{n+1}^1(\Omega)$ is continuously embedded in~$L_\infty(\Omega)$, there is $C_4>0$ such that
\begin{equation}
	\|\vartheta\|_{\infty} \le C_4 \|\vartheta\|_{W_{n+1}^1}\,, \qquad \vartheta\in W_{n+1}^1(\Omega)\,. \label{d102}
\end{equation} 
Let $\vartheta\in W_{n+1}^1(\Omega)$, $t>0$, and $j\ge 1$. We infer from~\eqref{ujNLGS}, \eqref{d3}, \eqref{d6a}, \eqref{d102}, and the symmetry of~$\chi_{j}$ that
\begin{align*}
	\left| \int_\Omega \vartheta(x) \partial_t u_j(t,x)\,\mathrm{d}x \right| & \le d_1 \left| \int_{\Omega\times\Omega} \chi_{j}(x,y) \vartheta(x) [u_j(t,y)]-u_j(t,x)]\,\mathrm{d}(x,y) \right| \\
	& \qquad + \|\vartheta\|_\infty \|u_j(t)\|_\infty \|v_j(t)\|_2^2 + f \|\vartheta\|_\infty |\Omega| \left( 1 + \|u_j(t)\|_\infty \right) \\
	& \le \frac{d_1}{2} \left| \int_{\Omega\times\Omega} \chi_{j}(x,y) [\vartheta(x) - \vartheta(y)] [u_j(t,x)]-u_j(t,y)]\,\mathrm{d}(x,y) \right| + C \|\vartheta\|_{W_{n+1}^1}\,.
\end{align*}
	Since the support of $\chi_{j}$ is a subset of $\{(x,y)\in\mathbb{R} ^{2n}: |x-y|\le C/j\}$, it follows from H\"older's inequality and \cite[Theorem~1]{BBM2001} that
	\begin{align*}
		& \left| \int_{\Omega\times\Omega} \chi_{j}(x,y) [\vartheta(x) - \vartheta(y)] [u_j(t,x)]-u_j(t,y)]\,\mathrm{d}(x,y) \right| \\ 
		& \qquad \le Y_j[u_j(t)]^{1/2} \left( \int_{\Omega\times\Omega} \chi_{j}(x,y) [\vartheta(x) - \vartheta(y)]^2\,\mathrm{d}(x,y) \right)^{1/2} \\
		& \qquad \le \frac{C}{j} Y_j[u_j(t)]^{1/2} \left( \int_{\Omega\times\Omega} \frac{\chi_{j}(x,y)}{|x-y|^2} [\vartheta(x) - \vartheta(y)]^2\,\mathrm{d}(x,y) \right)^{1/2} \\
		& \qquad \le C \|\varphi\|_{L_1(\mathbb{R}^n)}^{1/2} \|\vartheta\|_{W_2^1} Y_j[u_j(t)]^{1/2}  \le C \|\vartheta\|_{W_{n+1}^1}Y_j[u_j(t)]^{1/2} \,.
	\end{align*}
	Consequently, 
	\begin{align*}
		\left| \int_\Omega \vartheta(x) \partial_t u_j(t,x)\,\mathrm{d}x \right| & \le C \|\vartheta\|_{W_{n+1}^1} \left( 1 + Y_j[u_j(t)]^{1/2} \right)
	\end{align*}
	and a duality argument ensures that
	\begin{equation*}
		\|\partial_t u_j(t)\|_{(W_{n+1}^1)'} \le C \left( 1 + Y_j[u_j(t)]^{1/2} \right)\,.
	\end{equation*}
	Combining~\eqref{d6b} and the above inequality gives
	\begin{subequations}\label{d70}
	\begin{equation}
		\int_0^t \|\partial_t u_j(s)\|_{(W_{n+1}^1)'}^2\,\mathrm{d}s \le C (1+t)\,. \label{d7a}
	\end{equation}
	We next employ a similar argument to deduce from~\eqref{d60} and~\eqref{d101} that
	\begin{equation}
		\int_0^t \|\partial_t (u_j+v_j)(s)\|_{(W_{n+1}^1)'}^2\,\mathrm{d}s \le C (1+t)\,. \label{d7b}
	\end{equation}
	\end{subequations}
	Lemma~\ref{lem4.2} now readily follows from~\eqref{d70}.
\end{proof}

\subsection{Convergence}\label{sec4.2}

Having at hand Lemma~\ref{lem4.1} and Lemma~\ref{lem4.2}, we are in a position to establish compactness properties of the sequences $(u_j)_{j\ge 1}$ and $(v_j)_{j\ge 1}$ and begin with a simple consequence of~\eqref{d6a}, \eqref{d7}, the compactness of the embedding of $L_2(\Omega)$ in $W_{n+1}^1(\Omega)'$, and \cite[Corollary~1]{Sim1987}.

\begin{lemma}\label{lem4.3}
	The sequences $(u_j)_{j\ge 1}$ and $(v_j)_{j\ge 1}$ are relatively compact in $C([0,T],W_{n+1}^1(\Omega)')$ for any~$T>0$.
\end{lemma}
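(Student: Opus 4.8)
The plan is to apply the Aubin--Lions--Simon compactness machinery, specifically \cite[Corollary~1]{Sim1987}, to each sequence separately. The essential structure of that result is the following: if $B_0 \hookrightarrow\hookrightarrow B \hookrightarrow B_1$ is a chain of Banach spaces with the first embedding compact, and if a family of functions is bounded in $L_p((0,T),B_0)$ while its time derivatives are bounded in $L_q((0,T),B_1)$, then the family is relatively compact in $C([0,T],B)$ (for the borderline exponents $p=\infty$, as needed here, the relevant version of Simon's criterion applies). In our setting I would take $B_0 = L_2(\Omega)$ and $B = B_1 = W^1_{n+1}(\Omega)'$. The compactness of the embedding $L_2(\Omega)\hookrightarrow\hookrightarrow W^1_{n+1}(\Omega)'$ is the input flagged in the statement and follows by duality from the compact Sobolev embedding $W^1_{n+1}(\Omega)\hookrightarrow\hookrightarrow L_2(\Omega)'=L_2(\Omega)$ on the bounded smooth domain~$\Omega$ (Rellich--Kondrachov, since $n+1>2\ge 2$ guarantees even the embedding into $C(\bar\Omega)$ by~\eqref{d102}, hence compactly into $L_2$).

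Concretely, first I would record the two uniform-in-$j$ bounds that feed the criterion. From~\eqref{d6a} in Lemma~\ref{lem4.1} we have $\|u_j(t)\|_2^2 + \|v_j(t)\|_2^2\le C_1$ for all $t>0$ and $j\ge1$, so that $(u_j)_{j\ge1}$ and $(v_j)_{j\ge1}$ are each bounded in $L_\infty((0,T),L_2(\Omega))$, and a fortiori in $L_2((0,T),L_2(\Omega))$, uniformly in~$j$. From~\eqref{d7} in Lemma~\ref{lem4.2} we have $\int_0^T\big(\|\partial_t u_j\|_{(W^1_{n+1})'}^2 + \|\partial_t v_j\|_{(W^1_{n+1})'}^2\big)\,\mathrm{d}s\le C_3(1+T)$, so that $(\partial_t u_j)_{j\ge1}$ and $(\partial_t v_j)_{j\ge1}$ are bounded in $L_2((0,T),W^1_{n+1}(\Omega)')$, uniformly in~$j$.

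With these two bounds and the compact embedding $L_2(\Omega)\hookrightarrow\hookrightarrow W^1_{n+1}(\Omega)'$ in hand, \cite[Corollary~1]{Sim1987} directly yields that $(u_j)_{j\ge1}$ is relatively compact in $C([0,T],W^1_{n+1}(\Omega)')$, and the identical argument applied to $(v_j)_{j\ge1}$ gives the companion statement. Since the bounds hold for every $T>0$, the conclusion holds for every $T>0$, which is exactly the claim of the lemma.

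I do not anticipate a genuine obstacle here: all the analytic work has already been done in Lemmas~\ref{lem4.1} and~\ref{lem4.2}, and this lemma is a clean bookkeeping application of an off-the-shelf compactness theorem. The only point requiring a word of care is the verification that the embedding $L_2(\Omega)\hookrightarrow\hookrightarrow W^1_{n+1}(\Omega)'$ is compact and that Simon's criterion applies in the $p=\infty$ endpoint; both are standard, the former by the duality argument above and the latter being covered by the hypotheses of \cite[Corollary~1]{Sim1987}. I would simply cite these facts rather than reprove them. Note that this compactness in $C([0,T],W^1_{n+1}(\Omega)')$ is a weaker conclusion than the strong $L_2((0,T)\times\Omega)$ convergence asserted in~\eqref{p4}; upgrading it will require the nonlocal compactness result of Proposition~\ref{propA.1}, but that is the task of a subsequent step and not of this lemma.
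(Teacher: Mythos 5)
Your proof is correct and is essentially identical to the paper's: Lemma~\ref{lem4.3} is stated there as a direct consequence of \eqref{d6a}, \eqref{d7}, the compactness of the embedding of $L_2(\Omega)$ in $W_{n+1}^1(\Omega)'$, and \cite[Corollary~1]{Sim1987}, which is exactly the combination you invoke. Your duality/Rellich--Kondrachov justification of the compact embedding simply makes explicit what the paper leaves implicit.
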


Though allowing us to pass to the limit in the linear terms involved in~\eqref{ujNLGS}, Lemma~\ref{lem4.3} does not carry enough information to pass to the limit in the nonlinear terms in~\eqref{ujNLGS}. Thus, strong compactness is required to handle these terms. Despite the lack of gradient estimates here, it was observed in \cite[Theorem~4]{BBM2001} and \cite[Theorem~1.2]{Pon2004} that, given a bounded sequence $(z_j)_{j\ge 1}$ in $L_2(\Omega)$, estimates on $(Y_j(z_j))_{j\ge 1}$ imply the relative compactness of $(z_j)_{j\ge 1}$ in $L_2(\Omega)$. Time-dependent extensions of~\cite[Theorem~4]{BBM2001} and \cite[Theorem~1.2]{Pon2004} are developed in~\cite[Appendix~B]{ElSk2023} and~\cite[Appendix~B]{CES2023} when $\Omega=\mathbb{T}^n$ is the $n$-dimensional torus, and we prove a similar result in Proposition~\ref{propA.1} below.

\begin{lemma}\label{lem4.4}
	For any $T>0$, the sequences $(u_j)_{j\ge 1}$ and $(v_j)_{j\ge 1}$ are relatively compact in $L_2((0,T)\times \Omega)$ and their cluster points belong to $L_2((0,T),H^1(\Omega))$.
\end{lemma}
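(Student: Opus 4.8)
The plan is to obtain Lemma~\ref{lem4.4} as an application of the nonlocal Aubin--Lions--Simon compactness result of Proposition~\ref{propA.1}, whose hypotheses are exactly the uniform-in-$j$ bounds already recorded in Lemma~\ref{lem4.1} and Lemma~\ref{lem4.2}. The first preparatory step is to recast the form $Y_j$ from~\eqref{OpAux} in the Bourgain--Brezis--Mironescu framework. Setting $\rho_j(w) := |w|^2 j^{n+2}\varphi(jw)$ for $w\in\mathbb{R}^n$, the scaling $\zeta = jw$ gives $\int_{\mathbb{R}^n}\rho_j(w)\,\mathrm{d}w = m_2$ for every $j\ge 1$, so that $(\rho_j)_{j\ge 1}$ is a sequence of non-negative radial mollifiers concentrating at the origin with constant mass $m_2$, see~\eqref{m2}; moreover $\chi_j(x,y)=\rho_j(x-y)/|x-y|^2$, whence
\begin{equation*}
	Y_j[z] = \int_{\Omega\times\Omega}\rho_j(x-y)\,\frac{[z(x)-z(y)]^2}{|x-y|^2}\,\mathrm{d}(x,y)\,,\qquad z\in L_2(\Omega)\,.
\end{equation*}
This is precisely the functional governing nonlocal compactness in the spirit of~\cite[Theorem~4]{BBM2001} and~\cite[Theorem~1.2]{Pon2004}.

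Next I would verify the three required inputs. By~\eqref{d6a}, the sequences $(u_j)_{j\ge 1}$ and $(v_j)_{j\ge 1}$ are bounded in $L_\infty((0,T),L_2(\Omega))$, hence in $L_2((0,T),L_2(\Omega))$; by~\eqref{d6b}, the spatial energies satisfy $\int_0^T (Y_j[u_j(t)]+Y_j[v_j(t)])\,\mathrm{d}t \le C_1(1+T)$ uniformly in $j\ge 1$; and by~\eqref{d7} the time derivatives $(\partial_t u_j)_{j\ge 1}$ and $(\partial_t v_j)_{j\ge 1}$ are bounded in $L_2((0,T),W_{n+1}^1(\Omega)')$. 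Together with the compactness of the embedding of $L_2(\Omega)$ into $W_{n+1}^1(\Omega)'$, these are exactly the ingredients required by Proposition~\ref{propA.1}, whose application yields the relative compactness of $(u_j)_{j\ge 1}$ and $(v_j)_{j\ge 1}$ in $L_2((0,T)\times\Omega)$.

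It remains to identify the cluster points as elements of $L_2((0,T),H^1(\Omega))$. Let $z$ be a cluster point of, say, $(u_{j_k})_{k\ge 1}$, so that $u_{j_k}\to z$ in $L_2((0,T)\times\Omega)$; passing to a further subsequence (not relabelled), $u_{j_k}(t)\to z(t)$ in $L_2(\Omega)$ for a.e.\ $t\in(0,T)$, while Fatou's lemma combined with~\eqref{d6b} ensures $\liminf_{k\to\infty} Y_{j_k}[u_{j_k}(t)]<\infty$ for a.e.\ $t$. The lower-semicontinuity estimate of Ponce then gives, for a.e.\ $t$ and a positive constant equal to $m_2/n$ by the choice of $\rho_j$,
\begin{equation*}
	\frac{m_2}{n}\,\|\nabla z(t)\|_2^2 \le \liminf_{k\to\infty} Y_{j_k}[u_{j_k}(t)]\,,
\end{equation*}
so that $z(t)\in H^1(\Omega)$ for a.e.\ $t$; integrating in time and using Fatou's lemma once more with~\eqref{d6b} shows $\int_0^T\|\nabla z(t)\|_2^2\,\mathrm{d}t<\infty$, that is, $z\in L_2((0,T),H^1(\Omega))$. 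The argument for $(v_j)_{j\ge 1}$ is identical.

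The genuine difficulty is concealed in Proposition~\ref{propA.1} rather than in Lemma~\ref{lem4.4} itself. The static BBM/Ponce theory supplies, for each fixed $t$, relative compactness of $(z_j(t))_{j\ge 1}$ in $L_2(\Omega)$ from a bound on $Y_j[z_j(t)]$; upgrading this to compactness in $L_2((0,T)\times\Omega)$ requires trading the uniform nonlocal spatial regularity encoded in $\int_0^T Y_j[z_j]\,\mathrm{d}t$ against the temporal regularity carried by the $W_{n+1}^1(\Omega)'$-bound on $\partial_t z_j$, in the manner of~\cite[Corollary~1]{Sim1987} but with the $H^1$-seminorm replaced by $Y_j^{1/2}$. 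The adaptation of the BBM/Ponce estimates to a bounded domain, where the control of functions near $\partial\Omega$ via extension must be handled carefully, is precisely where the smoothness of $\partial\Omega$ and the restriction $n\ge 2$ enter.
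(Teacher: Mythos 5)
Your proof is correct and follows essentially the same route as the paper: both recast $Y_j$ as the Bourgain--Brezis--Mironescu functional associated with the mollifier $x\mapsto j^{n+2}|x|^2\varphi(jx)$ (the paper normalizes it by $m_2$ so that it has unit mass, as required by~\eqref{ap01}) and then invoke Proposition~\ref{propA.1} with $p=r=2$, $q=n+1$, using exactly the bounds~\eqref{d6a}, \eqref{d6b}, and~\eqref{d7} from Lemma~\ref{lem4.1} and Lemma~\ref{lem4.2}. The only inessential difference is that your final identification of the cluster points in $L_2((0,T),H^1(\Omega))$ via Ponce's liminf inequality and Fatou's lemma is redundant, since this regularity is already part of the conclusion of Proposition~\ref{propA.1}.
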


\begin{proof}
	Recalling 
	\begin{equation*}
		m_2 = \int_{\mathbb{R}^n} |x|^2 \varphi(x)\,\mathrm{d}x > 0\,,
	\end{equation*}
	see~\eqref{m2}, we define
	\begin{equation}
		\varrho_j(x) := j^{n+2} \frac{|x|^2}{m_2} \varphi(jx) = j^n \varrho_1(jx)\,, \qquad x\in\mathbb{R}^n\,, \quad j\ge 1\,. \label{d20}
	\end{equation}
	Since $\varphi\in C_0^\infty(\mathbb{R}^n)$ is non-negative and radially symmetric, the function $\varrho_1$ satisfies the assumptions of Proposition~\ref{propA.1}. Moreover, 
	\begin{equation*}
		(u_j)_{j\ge 1} \;\;\text{ is bounded in }\;\; L_2((0,T)\times\Omega) \cap L_2((0,T),W_{n+1}^1(\Omega)') 
	\end{equation*}
	by Lemma~\ref{lem4.1} and Lemma~\ref{lem4.2}, while we infer from~\eqref{d6b} that
	\begin{align*}
		& \int_0^T \int_{\Omega\times\Omega} \frac{\big| u_j(t,x)-u_j(t,y) \big|^2}{|x-y|^2} \varrho_j(x-y)\,\mathrm{d}(x,y)\mathrm{d}t \\
		& \qquad = \frac{1}{m_2}  \int_0^T \int_{\Omega\times\Omega} \chi_{j}(x,y) \big| u_j(t,x)-u_j(t,y) \big|^2\,\mathrm{d}(x,y)\mathrm{d}t \\
		& \qquad = \frac{1}{m_2} \int_0^T Y_j[u_j(t)]\,\mathrm{d}t \le \frac{C_1}{m_2} (1+T)\,.
	\end{align*} 
	Consequently, the sequence $(u_j)_{j\ge 1}$ satisfies the assumptions of Proposition~\ref{propA.1} (with $p=r=2$ and $q=n+1$) and an application of Proposition~\ref{propA.1} provides the relative compactness of $(u_j)_{j\ge 1}$ in $L_2((0,T)\times\Omega)$, together with the $L_2((0,T),H^1(\Omega))$-regularity of its cluster points. A similar argument allows us to handle the sequence $(v_j)_{j\ge 1}$ and thereby complete the proof of Lemma~\ref{lem4.4}.
\end{proof}

\begin{proof}[Proof of Theorem~\ref{thm1.3}]
	Owing to~\eqref{d3}, \eqref{d6a}, Lemma~\ref{lem4.3} and Lemma~\ref{lem4.4}, a diagonal process ensures the existence of a sequence $(j_k)_{k\ge 1}$, $j_k\to\infty$, and non-negative functions
	\begin{align*}
		u & \in C([0,\infty),W_{n+1}^1(\Omega)')\cap L_\infty((0,\infty)\times\Omega)\cap L_{2,\mathrm{loc}}([0,\infty),H^1(\Omega))\,, \\
		 v & \in C([0,\infty),W_{n+1}^1(\Omega)')\cap L_\infty((0,\infty),L_2(\Omega)) \cap L_{2,\mathrm{loc}}([0,\infty),H^1(\Omega))\,,
	\end{align*}
	such that, for any $t>0$, 
	\begin{align}
		\lim_{k\to\infty} \sup_{s\in [0,t]} \left[ \|(u_{j_k} - u)(s) \|_{(W_{n+1}^1)'} + \|(v_{j_k} - v)(s) \|_{(W_{n+1}^1)'} \right] & = 0\,, \label{d21}\\
		\lim_{k\to\infty} \int_0^t \left[ \|(u_{j_k} - u)(s) \|_2^2 + \|(v_{j_k} - v)(s) \|_2^2 \right]\,\mathrm{d}s & = 0\,, \label{d22} 
	\end{align}
	and
	\begin{equation}
		\lim_{k\to\infty} \left[ |(u_{j_k} - u)(s,x)| + |(v_{j_k} - v)(s,x)| \right] = 0 \;\;\text{ for a.e. }\;\; (s,x)\in (0,t)\times\Omega\,. \label{d23}
	\end{equation}
	
	We first use the previous convergences to identify the limit of the nonlinear terms in~\eqref{CjNLGS}. For $t>0$ and $k\ge 1$, 
	\begin{equation}
		\begin{split}
			\int_0^t \int_\Omega \left| \left( u_{j_k} v_{j_k}^2 - uv^2 \right)(s,x) \right| \,\mathrm{d}x\mathrm{d}s & = \int_0^t \int_\Omega u_{j_k} (s,x) \left| \left( v_{j_k}^2 - v^2 \right)(s,x) \right| \,\mathrm{d}x\mathrm{d}s \\ 
			& \qquad + \int_0^t \int_\Omega v^2(s,x) \left| \left( u_{j_k} - u \right)(s,x) \right| \,\mathrm{d}x\mathrm{d}s\,.
		\end{split} \label{d24}
	\end{equation}
	On the one hand, it follows from~\eqref{d3}, \eqref{d6a}, \eqref{d22}, and H\"older's inequality that
	\begin{align*}
		& \int_0^t \int_\Omega u_{j_k} (s,x) \left| \left( v_{j_k}^2 - v^2 \right)(s,x) \right| \,\mathrm{d}x\mathrm{d}s \\
		& \qquad \le \int_0^t \|u_{j_k}(s)\|_\infty \|(v_{j_k}+v)(s)\|_2 \|(v_{j_k}-v)(s)\|_2\,\mathrm{d}s \\
		& \qquad \le C_0 \left( \int_0^t \|(v_{j_k}+v)(s)\|_2^2\,\mathrm{d}s \right)^{1/2} \left( \int_0^t \|(v_{j_k}-v)(s)\|_2^2\,\mathrm{d}s \right)^{1/2} \\
		& \qquad \le 2 C_0 \sqrt{C_1} \left( \int_0^t \|(v_{j_k}-v)(s)\|_2^2\,\mathrm{d}s \right)^{1/2}\,,
	\end{align*}
	so that, by~\eqref{d22},
	\begin{equation}
		\lim_{k\to\infty} \int_0^t \int_\Omega u_{j_k} (s,x) \left| \left( v_{j_k}^2 - v^2 \right)(s,x) \right| \,\mathrm{d}x\mathrm{d}s = 0\,. \label{d25}
	\end{equation}
	On the other hand, since $v^2\in L_1((0,t)\times\Omega)$, we deduce from~\eqref{d3}, \eqref{d23}, and Lebesgue's dominated convergence theorem that
	\begin{equation}
		\lim_{k\to\infty} \int_0^t \int_\Omega v^2(s,x) \left| \left( u_{j_k} - u \right)(s,x) \right| \,\mathrm{d}x\mathrm{d}s = 0\,. \label{d26}
	\end{equation}
	Gathering~\eqref{d24}, \eqref{d25}, and~\eqref{d26}, we conclude that
	\begin{equation}
		\lim_{k\to\infty} \int_0^t \int_\Omega \left| \left( u_{j_k} v_{j_k}^2 - uv^2 \right)(s,x) \right| \,\mathrm{d}x\mathrm{d}s = 0\,. \label{d27}
	\end{equation}
	
	Now, it follows from~\eqref{d22}, \eqref{d27}, and Proposition~\ref{propB.1} that $u=U$, where $U$ is the unique mild solution to 
	\begin{equation*}
		\begin{split}
			\partial_t U & = \frac{m_2 d_1}{2n} \Delta U - uv^2 + f(1-u) \;\;\text{ in }\;\; (0,\infty)\times\Omega\,, \\
			\nabla U\cdot \mathbf{n} & = 0 \;\;\text{ on }\;\; (0,\infty)\times\partial\Omega\,, \\
			U(0) & = u^0 \;\;\text{ in }\;\; \Omega\,;
		\end{split}
	\end{equation*}
that is, $u$ is the mild solution to
		\begin{equation*}
		\begin{split}
			\partial_t u & = \frac{m_2 d_1}{2n} \Delta u - uv^2 + f(1-u) \;\;\text{ in }\;\; (0,\infty)\times\Omega\,, \\
			\nabla u\cdot \mathbf{n} & = 0 \;\;\text{ on }\;\; (0,\infty)\times\partial\Omega\,, \\
			u(0) & = u^0 \;\;\text{ in }\;\; \Omega\,.
		\end{split}
	\end{equation*}
	A similar argument ensures that $v$ is the unique mild solution to
	\begin{equation*}
		\begin{split}
			\partial_t v & = \frac{m_2 d_2}{2n} \Delta v + uv^2 - (f+\kappa)v \;\;\text{ in }\;\; (0,\infty)\times\Omega\,, \\
			\nabla v\cdot \mathbf{n} & = 0 \;\;\text{ on }\;\; (0,\infty)\times\partial\Omega\,, \\
			v(0) & = v^0 \;\;\text{ in }\;\; \Omega\,.
		\end{split}
	\end{equation*}
	Recalling the already obtained regularity on $(u,v)$ allows us to deduce that $u$ and $v$ satisfy~\eqref{p5} and~\eqref{p6}, respectively, thereby completing the proof.
\end{proof}

\section{Extension to the Dirichlet setting}\label{sec5}

The approach used to establish the well-posedness of~\eqref{iNLGS} is actually quite flexible and can be easily extended to other nonlocal operators such as a nonlocal version of homogeneous Dirichlet boundary conditions, which differs from~\eqref{nlo}. Specifically, in this section only, given a measurable function $\gamma: \mathbb{R}^n\times\mathbb{R}^n\to [0,\infty)$ and $z\in L_\infty(\Omega)$, we define the nonlocal operator $\Gamma_\gamma$ by
\begin{equation}
\Gamma_\gamma z(x) := \int_{\mathbb{R}^n} \gamma(x,y) \big(\bar{z}(y)-z(x)\big)\,\mathrm{d}y \,, \qquad x\in\Omega\,, \label{nloD}
\end{equation}
where $\bar{z}(x)=z(x)$ for $x\in\Omega$ and $\bar{z}(x)=0$ for $x\in \Omega^c = \mathbb{R}^n\setminus\Omega$. With this definition, we consider the problem
\begin{subequations}\label{CDLGS}
	\begin{align}
		\partial_t u & = d_1 \Gamma_{\gamma_1} u - u v^2 + f (1-u) \;\;\text{ in }\;\; (0,\infty)\times\Omega\,,  \label{uNLGSx} \\
		\partial_t v & = d_2 \Gamma_{\gamma_2} v + u v^2- (f + \kappa) v \;\;\text{ in }\;\; (0,\infty)\times\Omega\,, \label{vNLGSx} \\
u&=v=0  \;\;\text{ in }\;\; (0,\infty)\times\Omega^c\,,  \label{uNLGSxx} \\
		(u,v)(0) & = \big( u^0,v^0) \;\;\text{ in }\;\; \Omega\,,  \label{iNLGSx}
	\end{align}
\end{subequations} 
and report the following well-posedness result.

\begin{theorem}[Well-posedness]\label{thm5.2.1}
	Assume that $\gamma_\ell: \mathbb{R}^n\times\mathbb{R}^n\to [0,\infty)$ is a measurable function satisfying 
	\begin{equation}
	\int_{\Omega} \gamma_\ell(y,x)\,\mathrm{d} y\le \int_{\mathbb{R}^n} \gamma_\ell(x,y)\,\mathrm{d} y \le  \gamma_\infty <\infty\,, \qquad x\in\Omega\,, \label{gammaD}
	\end{equation}
	for $\ell\in\{1,2\}$ and consider $(u^0,v^0)\in X^+\times X^+$. Then there is a unique non-negative global solution 
	\begin{equation*}
		(u,v)\in C^1\big([0,\infty),X^+\times X^+\big)
	\end{equation*}
	to~\eqref{CDLGS} which is bounded; that is, 
	\begin{equation*}
		\sup_{t\ge 0} \big\{ \|u(t)\|_{\infty} + \|v(t)\|_{\infty} \big\} < \infty\,.
	\end{equation*}
	In fact, the mapping $(u^0,v^0)\mapsto (u,v)$ defines a global semiflow on $X^+\times X^+$.
	
	Assume further that $\gamma_\ell\in C(\bar{\Omega}\times \bar{\Omega})$ for $\ell\in\{1,2\}$. Then the same result is true when replacing $X$ by~$C(\bar{\Omega})$.
\end{theorem}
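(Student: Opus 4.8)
The plan is to follow the strategy of Section~\ref{sec2} essentially verbatim, replacing only the two ingredients that are sensitive to the change of nonlocal operator: the linear semigroup estimates of Lemma~\ref{lem2.1} and the global $L_\infty$ bound on $u$. Writing $z=(u,v)$, the system~\eqref{CDLGS} again takes the abstract form $\dot z = Az + F(z)$ with $A=\mathrm{diag}(d_1\Gamma_{\gamma_1},d_2\Gamma_{\gamma_2})$ and the \emph{same} reaction field $F$ as in the proof of Proposition~\ref{prop2.2}. Since $F$ is unchanged, its local Lipschitz continuity and the sign property~\eqref{locpos} carry over unaltered. Thus, once $A\in\mathcal{L}(X^2)$ is shown to generate a positive contraction semigroup, local existence, uniqueness, the semiflow property, the positivity~\eqref{p1}, and the blow-up criterion~\eqref{global} follow exactly as before.

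First I would prove the Dirichlet analogue of Lemma~\ref{lem2.1}. Writing $\Gamma_\gamma z(x)=\int_{\mathbb{R}^n}\gamma(x,y)\bar z(y)\,\mathrm{d}y - z(x)\int_{\mathbb{R}^n}\gamma(x,y)\,\mathrm{d}y$ and using $\|\bar z\|_{L_\infty(\mathbb{R}^n)}=\|z\|_\infty$ together with the row bound in~\eqref{gammaD} gives $\Gamma_\gamma\in\mathcal{L}(X)$ with $\|\Gamma_\gamma\|_{\mathcal{L}(X)}\le 2\gamma_\infty$, hence generation of a uniformly continuous semigroup. For the contraction and positivity properties I would repeat the $(z-M)_+$ computation, but \emph{only for $M\ge 0$}: with $z(t)=e^{t\Gamma_\gamma}z^0$, splitting $\bar z(y)-z(x)=(\bar z(y)-M)-(z(x)-M)$ and discarding the exterior values via $(\bar z(y)-M)_+=0$ for $y\in\Omega^c$ (which is exactly where $M\ge 0$ is needed) leads to
\begin{equation*}
	\frac{\mathrm{d}}{\mathrm{d}t}\int_\Omega (z(t,x)-M)_+\,\mathrm{d}x \le \int_\Omega (z(t,x)-M)_+\left[\int_\Omega\gamma(y,x)\,\mathrm{d}y - \int_{\mathbb{R}^n}\gamma(x,y)\,\mathrm{d}y\right]\mathrm{d}x \le 0\,,
\end{equation*}
the last inequality being precisely the column-versus-row bound in~\eqref{gammaD}. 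Choosing $M=\|z^0\|_\infty$ for $\pm z^0$ yields the contraction estimate~\eqref{b3}, while $M=0$ applied to $-z^0$ yields positivity. I would stress that, unlike the Neumann case, the identity $e^{t\Gamma_\gamma}1=1$ now \emph{fails} (indeed $\Gamma_\gamma 1=-\int_{\Omega^c}\gamma(\cdot,y)\,\mathrm{d}y\le 0$), but this property is never used below.

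The genuinely new point is the global $L_\infty$ bound, because the argument of Proposition~\ref{prop2.2} controlled $u$ through the equation for $w=u-1$, which relied on $\Gamma_1 1=0$. As this is no longer available, I would argue by dissipativity directly on $u$: from~\eqref{uNLGSx}, the non-negativity of $u$ and $v$, and $uv^2\ge 0$, the Duhamel formula gives $u(t)\le e^{t(d_1\Gamma_{\gamma_1}-f)}u^0+\int_0^t e^{(t-s)(d_1\Gamma_{\gamma_1}-f)}f\,\mathrm{d}s$, and the contraction estimate just established (with $\|e^{t(d_1\Gamma_{\gamma_1}-f)}\|_{\mathcal{L}(X)}\le e^{-ft}$) yields the uniform bound $\|u(t)\|_\infty\le\max\{\|u^0\|_\infty,1\}$, a substitute for~\eqref{b8}. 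With $u$ thus controlled, the bound on $h:=u+v$ is obtained exactly as in~\eqref{p2}--\eqref{b9}: the reaction terms still cancel, so $h$ solves $\partial_t h=d_2\Gamma_{\gamma_2}h+(d_1\Gamma_{\gamma_1}-d_2\Gamma_{\gamma_2})u+f-fh-\kappa v$, and since $\|(d_1\Gamma_{\gamma_1}-d_2\Gamma_{\gamma_2})u(s)\|_\infty$ is controlled by $\|u(s)\|_\infty$ via $\|\Gamma_{\gamma_\ell}\|_{\mathcal{L}(X)}\le 2\gamma_\infty$ and $-\kappa v\le 0$, Duhamel together with the contraction semigroup bound gives a uniform bound on $h$, hence on $v=h-u$. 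These bounds contradict~\eqref{global}, so $T_m=\infty$ and the uniform-in-time boundedness follows. The continuous-coefficient case is handled identically once $\Gamma_{\gamma_\ell}\in\mathcal{L}(C(\bar\Omega))$ is checked.

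I expect the main obstacle to be the loss of the invariant constant $1$: recovering a global $L_\infty$ bound on $u$ without the comparison function $u\equiv 1$, and ensuring that the semigroup monotonicity computation closes with the one-sided hypothesis~\eqref{gammaD} and the restriction $M\ge 0$ forced by the zero extension. Everything else is a transcription of the Neumann argument.
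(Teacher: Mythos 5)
Your proof is correct, and its skeleton is the paper's: the paper proves Theorem~\ref{thm5.2.1} by referring back to Section~\ref{sec2}, the single modification being that~\eqref{b4} is replaced by~\eqref{b4D}, i.e.\ $e^{t\Gamma_\gamma}\ge 0$ and $e^{t\Gamma_\gamma}1\le 1$, which is established exactly by your $(z-M)_+$ computation for $M\ge 0$ under the one-sided hypothesis~\eqref{gammaD}. The one point where you genuinely depart from the paper is the global $L_\infty$ bound on $u$. You declare the $w=u-1$ argument of~\eqref{uw} unavailable because $\Gamma_{\gamma_1}1\ne 0$; the paper's position is that it survives essentially verbatim: in the Dirichlet setting $\Gamma_{\gamma_1}1=-\int_{\Omega^c}\gamma_1(\cdot,y)\,\mathrm{d}y\le 0$, so the equation for $w$ merely acquires the extra source $d_1\Gamma_{\gamma_1}1\le 0$, which is discarded through the positivity of the semigroup exactly like the term $-(1+w)v^2$, and the comparison of $e^{t(d_1\Gamma_{\gamma_1}-f)}w(0)$ with the non-negative constant $(\|u^0\|_\infty-1)_+$ then requires only $e^{t\Gamma_{\gamma_1}}1\le 1$ — this is precisely why the paper records~\eqref{b4D}, the property you assert is ``never used.'' The paper's route thus reproduces the sharper estimate~\eqref{ubu}, $u\le 1+e^{-ft}(\|u^0\|_\infty-1)_+$, which is what underlies Corollary~\ref{cor2.3} and Theorem~\ref{thm1.2} in the Neumann case. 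Your substitute — Duhamel directly on $u$ with the constant source $f$, combined with positivity and the contraction bound $\|e^{t(d_1\Gamma_{\gamma_1}-f)}\|_{\mathcal{L}(X)}\le e^{-ft}$ — is perfectly valid and slightly more elementary, but it only yields $\|u(t)\|_\infty\le\max\{\|u^0\|_\infty,1\}$, losing the exponential relaxation towards the level $1$; since Theorem~\ref{thm5.2.1} claims only global well-posedness and boundedness, this loss is immaterial here, though it would matter if one wanted the Dirichlet analogues of the stabilization results. The remaining steps (bound on $h=u+v$, blow-up criterion, the $C(\bar\Omega)$ case) are a faithful transcription of the Neumann argument and are fine.
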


The proof is the same as that of Theorem~\ref{thm1.1}, to which we refer. The only change is in the analogue of Lemma~\ref{lem2.1}, where~\eqref{b4} is replaced by 
\begin{equation}
	e^{t\Gamma_{\gamma}}\ge 0 \;\;\text{ and }\;\; e^{t\Gamma_{\gamma}} 1\le 1\,, \qquad t\ge 0\,. \label{b4D}
\end{equation} 
The proof of~\eqref{b4D} follows the lines of that of~\eqref{b5} and relies on~\eqref{gammaD}. 


\appendix

\section{A compactness result}\label{secA}

Let $\Omega$ be a bounded domain of $\mathbb{R}^n$, $n\ge 2$, and consider a non-negative radially symmetric function $\varrho\in C_0^\infty(\mathbb{R}^n)$ such that \begin{equation}
	\int_{\mathbb{R}^n} \varrho(x)\,\mathrm{d}x = 1\,. \label{ap01}
\end{equation}
For $x\in\mathbb{R}^n$ and $j\ge 1$, we put $\varrho_j(x) := j^n \varrho(jx)$ and note that $\varrho_j\in C_0^\infty(\mathbb{R}^n)$ with $\|\varrho_j\|_{L_1(\mathbb{R}^n)} = 1$.

Next, for an open subset $\mathcal{O}$ of $\mathbb{R}^n$, $p\in (1,\infty)$, $j\ge 1$, and $g\in L_p(\mathcal{O})$, we define
\begin{equation}
	\Lambda_j(g,\mathcal{O}) := \int_{\mathcal{O}\times\mathcal{O}} \frac{|g(x)-g(y)|^p}{|x-y|^p} \varrho_j(x-y)\,\mathrm{d}(x,y) \in [0,\infty]\,, \label{ap02}
\end{equation}
and set $\Lambda_j(\cdot) := \Lambda_j(\cdot,\mathbb{R}^n)$ for simplicity.

\begin{proposition}\label{propA.1}
	Let $(p,q,r)\in (1,\infty)^3$, $T>0$, and consider a sequence $(f_j)_{j\ge 1}$ of measurable functions on $(0,T)\times\Omega$ satisfying
	\begin{subequations}\label{ap3-5}
	\begin{equation}
		f_j \in L_p((0,T)\times\Omega)\cap L_r((0,T),W_q^1(\Omega)')\,, \qquad j\ge 1\,, \label{ap00}
	\end{equation}
	\begin{equation}
		\int_0^T \Big[ \|f_j(t)\|_p^p + \Lambda_j(f_j(t),\Omega) \Big] \mathrm{d}t \le K_0\,, \qquad j\ge 1\,, \label{ap03}
	\end{equation}
	as well as
	\begin{equation}
		\int_0^T \|\partial_t f_j(t)\|_{(W_q^1)'}^r\,\mathrm{d}t \le K_0\,, \qquad j\ge 1\,, \label{ap04}
	\end{equation}
	\end{subequations}
	for some $K_0>0$. Then $(f_j)_{j\ge 1}$ is relatively compact in $L_p((0,T)\times\Omega)$ and its cluster points in $L_p((0,T)\times\Omega)$ belong to $L_p((0,T),W_p^1(\Omega))$.
\end{proposition}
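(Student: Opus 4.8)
The plan is to combine the static nonlocal compactness results of Bourgain--Brezis--Mironescu and Ponce with the time regularity encoded in~\eqref{ap04} through an Aubin--Lions--Simon type argument, thereby obtaining a nonlocal, time-dependent version of~\cite{Sim1987}. The heart of the matter is a uniform-in-$j$ control of spatial oscillations by the functional $\Lambda_j$, and I expect this to be the main obstacle. Concretely, I would first establish a nonlocal translation estimate of the form
\[
	\int_{\Omega'} |g(x+h)-g(x)|^p\,\mathrm{d}x \le C |h|^p\, \Lambda_j(g,\Omega)\,, \qquad C/j \le |h| \le d(\Omega',\partial\Omega)\,,
\]
valid for every $g\in L_p(\Omega)$, every $\Omega'\Subset\Omega$, and every $j\ge 1$, with $C$ depending only on $n$, $p$, and $\varrho$. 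This is the point where the radially symmetric and non-increasing profile of $\varrho$ is essential: it guarantees that the concentrating kernels $\varrho_j$ carry enough mass at every intermediate scale between $1/j$ and $|h|$ to control the translate by $h$, and it is precisely the adaptation of~\cite[Theorem~4]{BBM2001} and~\cite[Theorem~1.2]{Pon2004} to the rescaled kernels~\eqref{d20}, now made quantitative and uniform in~$j$. The consistency check as $j\to\infty$, where $\Lambda_j(g)\to K_{n,p}\|\nabla g\|_p^p$ and $\int_{\Omega'}|g(x+h)-g(x)|^p\,\mathrm{d}x\le |h|^p\|\nabla g\|_p^p$, indicates the expected form of the estimate.

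Next I would exploit this estimate through spatial mollification. Fix $\Omega'\Subset\Omega$, a standard mollifier $\eta\in C_0^\infty(B_1)$, and set $\eta_\delta:=\delta^{-n}\eta(\cdot/\delta)$ and $f_j^\delta(t):=\eta_\delta *_x f_j(t)$ for $\delta$ smaller than $d(\Omega',\partial\Omega)$. For each \emph{fixed} $\delta$ the smoothing gains a spatial derivative: by~\eqref{ap03} and Young's inequality $(f_j^\delta)_{j\ge 1}$ is bounded in $L_p((0,T),W_p^1(\Omega'))$, with $\|\nabla f_j^\delta(t)\|_p\le C\delta^{-1}\|f_j(t)\|_p$. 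Moreover $\partial_t f_j^\delta=\eta_\delta *_x\partial_t f_j$, and since $\eta_\delta(x-\cdot)\in W_q^1(\Omega)$ with norm bounded by $C(\delta)$ uniformly in $x\in\Omega'$, a duality argument together with~\eqref{ap04} shows that $(\partial_t f_j^\delta)_{j\ge 1}$ is bounded in $L_r((0,T),L_\infty(\Omega'))$, hence in $L_r((0,T),L_p(\Omega'))$. Since $W_p^1(\Omega')$ is compactly embedded in $L_p(\Omega')$ by the Rellich--Kondrachov theorem (choosing $\Omega'$ with smooth boundary) and $r>1$, the Aubin--Lions--Simon theorem~\cite[Corollary~4]{Sim1987} yields the relative compactness of $(f_j^\delta)_{j\ge 1}$ in $L_p((0,T)\times\Omega')$ for each fixed $\delta$.

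I would then glue the two pieces by a total boundedness argument. Integrating the translation estimate in time and using Jensen's inequality (recall $\int\eta_\delta=1$) gives
\[
	\sup_{j\ge C/\delta} \int_0^T \|(f_j^\delta-f_j)(t)\|_{L_p(\Omega')}^p\,\mathrm{d}t \le C \delta^p K_0 \xrightarrow[\delta\to 0]{} 0\,,
\]
the restriction $j\ge C/\delta$ being harmless since the remaining finitely many indices contribute isolated points to a net. Given $\rho>0$, choosing $\delta$ small and a finite $\rho/2$-net of the relatively compact set $\{f_j^\delta\}_{j\ge 1}$ produces a finite $\rho$-net for $\{f_j\}_{j\ge 1}$; completeness of $L_p((0,T)\times\Omega')$ then gives relative compactness there for every $\Omega'\Subset\Omega$. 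The remaining boundary layer near $\partial\Omega$ is the second technical point: using the smoothness of $\partial\Omega$ one extends $f_j$ across the boundary (or equivalently invokes the up-to-the-boundary version of Ponce's estimate on a smooth bounded domain), so that the $\varrho_j$-averages near $\partial\Omega$ remain controlled by $\Lambda_j(f_j,\Omega)$ and the compactness upgrades from $L_p((0,T)\times\Omega')$ to $L_p((0,T)\times\Omega)$.

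Finally, to identify the regularity of the cluster points, let $f$ be a limit of some subsequence $(f_{j_k})_k$ in $L_p((0,T)\times\Omega)$, and extract a further subsequence along which $f_{j_k}(t)\to f(t)$ in $L_p(\Omega)$ for almost every $t\in(0,T)$. The lower semicontinuity inequality of~\cite{BBM2001, Pon2004}, namely $K_{n,p}\|\nabla f(t)\|_p^p\le\liminf_k\Lambda_{j_k}(f_{j_k}(t),\Omega)$ whenever the right-hand side is finite, combined with Fatou's lemma in the time variable and the uniform bound~\eqref{ap03}, yields
\[
	\int_0^T \|\nabla f(t)\|_p^p\,\mathrm{d}t \le \frac{1}{K_{n,p}} \liminf_{k\to\infty} \int_0^T \Lambda_{j_k}(f_{j_k}(t),\Omega)\,\mathrm{d}t \le \frac{K_0}{K_{n,p}}<\infty\,,
\]
so that $f\in L_p((0,T),W_p^1(\Omega))$, which completes the argument.
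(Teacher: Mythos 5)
Your skeleton coincides with the paper's own proof: a static Bourgain--Brezis--Mironescu/Ponce estimate made uniform in $j$, compactness of spatially mollified functions for each fixed mollification parameter via the time-derivative bound and Simon's results, a total-boundedness gluing, a boundary-layer estimate from \cite[Lemma~5.1]{Pon2004}, and lower semicontinuity for the $W_p^1$-regularity of cluster points. However, there is a genuine gap at the step you yourself single out as the heart of the matter. You justify the uniform translation estimate by ``the radially symmetric and non-increasing profile of $\varrho$'', claiming that the rescaled kernels carry mass ``at every intermediate scale between $1/j$ and $|h|$''. Proposition~\ref{propA.1} assumes no monotonicity of the profile (only non-negativity, radial symmetry, $C_0^\infty$, and~\eqref{ap01}), and in the paper's own application of the proposition (Lemma~\ref{lem4.4}) the kernel is $\varrho_1(x)=|x|^2\varphi(x)/m_2$ from~\eqref{d20}, which \emph{vanishes at the origin} and is not non-increasing --- note that you cite~\eqref{d20} while simultaneously assuming a monotone profile, an internal inconsistency. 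Moreover, since $\mathrm{supp}\,\varrho_j\subset B_{C/j}(0)$, the rescaled kernels have mass only at scale $\approx 1/j$, whatever the profile, so the ``mass at every intermediate scale'' heuristic is unavailable for any compactly supported $\varrho$. Making the oscillation control uniform in $j$ for general radial kernels in $n\ge 2$ is precisely the nontrivial content of \cite[Proposition~4.2]{Pon2004}, which the paper isolates as Lemma~\ref{lemA.2}: $\int_{\mathbb{R}^n}|\Phi_\delta*g-g|^p\,\mathrm{d}x\le K_1\delta_0^p\Lambda_j(g)$ for all $\delta\in(0,\delta_0)$, under the sole mass condition $\int_{B_{\delta_0}(0)}\varrho_j\,\mathrm{d}x\ge 1/2$. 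Quoting that estimate instead of your translation lemma also repairs a smaller defect: your gluing bound $\sup_{j\ge C/\delta}\|f_j^\delta-f_j\|^p_{L_p((0,T)\times\Omega')}\le C\delta^pK_0$ needs control of translations of \emph{every} size $|h|\le\delta$, whereas your estimate is stated only for $|h|\ge C/j$, and the restriction $j\ge C/\delta$ does not remove the contribution of $|h|<C/j$ from the mollification integral.

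A second, more easily repaired, issue is the boundary layer and the order of your steps. The extension route would require an extension operator whose nonlocal energy $\Lambda_j$ is controlled uniformly in $j$, which you do not provide; the alternative you mention is the paper's Lemma~\ref{lemA.6}, applied to $g=(f_{j_k}-f)(t)$ and integrated in time. But to use it one must bound $\int_0^T\Lambda_{j_k}(f(t),\Omega)\,\mathrm{d}t$, which is done via \cite[Theorem~1]{BBM2001} and therefore requires knowing $f\in L_p((0,T),W_p^1(\Omega))$ \emph{before} the boundary upgrade. Your lower-semicontinuity/Fatou argument indeed yields this regularity, but in your write-up it appears after the global compactness for which it is needed; the fix is simply to run it from the interior convergence (a.e.-in-$t$ convergence on compactly contained subdomains suffices) prior to invoking the boundary estimate. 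The remaining components of your proposal --- the bound $\|\nabla f_j^\delta(t)\|_p\le C\delta^{-1}\|f_j(t)\|_p$, the duality bound on $\partial_t f_j^\delta$ in $L_r((0,T),L_p(\Omega'))$, Aubin--Lions--Simon for fixed $\delta$ (interchangeable with the Riesz--Fr\'echet--Kolmogorov argument of Lemma~\ref{lemA.4}), and the total-boundedness gluing as in Lemma~\ref{lemA.5} --- are sound.
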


As already mentioned, Proposition~\ref{propA.1} is similar to \cite[Appendix~B]{CES2023} and \cite[Appendix~B]{ElSk2023} and is in the spirit of \cite[Corollary~4]{Sim1987}. Its proof relies on the compactness results established in \cite[Theorem~4]{BBM2001} and \cite[Theorem~1.2]{Pon2004} for sequences of time-independent functions and requires several intermediate results. 

For further use, we fix a non-negative radially symmetric function $\Phi\in C_0^\infty(\mathbb{R}^n)$ satisfying
\begin{equation}
	\mathrm{supp}\,\Phi \subset B_1(0)\,, \qquad 0 \le \Phi \le 1\,, \qquad \int_{\mathbb{R}^n} \Phi(x)\,\mathrm{d}x =1\,, \label{ap05}
\end{equation}
and set $\Phi_\delta(x) := \delta^{-n} \Phi(x/\delta)$ for $x\in\mathbb{R}^n$ and $\delta\in (0,1)$. We begin with an estimate on $\Phi_\delta*g-g$ for $g\in L_p(\mathbb{R}^n)$ and argue as in the proof of \cite[Proposition~4.2]{Pon2004} to derive the following result.

\begin{lemma}\label{lemA.2}
	Let $\delta_0\in (0,1)$ and consider $j\ge 1$ such that
	\begin{equation}
		\int_{B_{\delta_0}(0)} \varrho_j(x)\,\mathrm{d}x \ge \frac{1}{2}\,. \label{ap06}
	\end{equation}
	There exists $K_1>0$ depending only on $n$ and $p$ such that, for any $g\in L_p(\mathbb{R}^n)$ satisfying
	\begin{equation}
		\Lambda_j(g) = \int_{\mathbb{R}^n\times\mathbb{R}^n} \frac{|g(x)-g(y)|^p}{|x-y|^p} \varrho_j(x-y)\,\mathrm{d}(x,y) < \infty \label{ap07}
	\end{equation}
	and any $\delta\in (0,\delta_0)$, there holds
	\begin{equation}
		\int_{\mathbb{R}^n} \left| (\Phi_\delta*g - g)(x) \right|^p\,\mathrm{d}x \le K_1 \delta_0^p \Lambda_j(g)\,. \label{ap08}
	\end{equation}
\end{lemma}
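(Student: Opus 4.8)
The plan is to reduce the estimate to a bound on the $L_p$-modulus of continuity of $g$ and then to compare the latter with the weighted seminorm $\Lambda_j(g)$, following the argument of \cite[Proposition~4.2]{Pon2004}. First I would use that $\Phi_\delta$ is a probability density to write, for a.e.\ $x\in\mathbb{R}^n$,
\[
(\Phi_\delta*g-g)(x)=\int_{\mathbb{R}^n}\Phi_\delta(h)\,\big(g(x-h)-g(x)\big)\,\mathrm{d}h ,
\]
and apply Jensen's inequality with respect to the probability measure $\Phi_\delta(h)\,\mathrm{d}h$, followed by Fubini's theorem, to obtain
\[
\int_{\mathbb{R}^n}|(\Phi_\delta*g-g)(x)|^p\,\mathrm{d}x\le\int_{\mathbb{R}^n}\Phi_\delta(h)\,D(h)^p\,\mathrm{d}h,\qquad D(h):=\Big(\int_{\mathbb{R}^n}|g(x-h)-g(x)|^p\,\mathrm{d}x\Big)^{1/p}.
\]
Since $\mathrm{supp}\,\Phi_\delta\subset B_\delta(0)\subset B_{\delta_0}(0)$ and $\int_{\mathbb{R}^n}\Phi_\delta=1$, this bounds the left-hand side by $\sup_{|h|\le\delta}D(h)^p$, so it suffices to prove that $D(h)^p\le K_1\delta_0^p\Lambda_j(g)$ for every $|h|\le\delta_0$.

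Next I would reformulate the hypothesis in the same language. By Fubini's theorem and the substitution $z=x-y$,
\[
\Lambda_j(g)=\int_{\mathbb{R}^n}\frac{\varrho_j(z)}{|z|^p}\,D(z)^p\,\mathrm{d}z ,
\]
so that $\Lambda_j(g)$ is nothing but a $\varrho_j$-weighted average, with weight $|z|^{-p}$, of the very quantities $D(z)^p$ we wish to control. Bounding $|z|\le\delta_0$ on $B_{\delta_0}(0)$ yields in particular $\int_{B_{\delta_0}(0)}\varrho_j(z)D(z)^p\,\mathrm{d}z\le\delta_0^p\Lambda_j(g)$, which together with $\int_{B_{\delta_0}(0)}\varrho_j\ge 1/2$ gives good control of $D$ at the scale $\sim 1/j$ on which $\varrho_j$ is concentrated.

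The hard part will be to upgrade this averaged information to the pointwise bound on $D(h)$ for $|h|$ as large as $\delta_0$, which is exactly the content of \cite[Proposition~4.2]{Pon2004}. The mechanism is the subadditivity $D(a+b)\le D(a)+D(b)$ for $a,b\in\mathbb{R}^n$, a consequence of the triangle inequality and the translation invariance of $\|\cdot\|_{L_p(\mathbb{R}^n)}$, which lets one telescope a translation by $h$ into a chain of $\sim j|h|$ elementary translations at the scale $1/j$ where $\varrho_j$ lives, the intermediate base points being averaged against $\varrho_j$ so that each averaged step is estimated through $\int_{\mathbb{R}^n}\varrho_j(z)|z|^{-p}D(z)^p\,\mathrm{d}z=\Lambda_j(g)$; a power-mean inequality over the $\sim j|h|$ steps then recombines these contributions into $D(h)^p\lesssim|h|^p\Lambda_j(g)\le\delta_0^p\Lambda_j(g)$. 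The main obstacle is precisely this scale mismatch: $\Phi_\delta$ lives at scale $\delta$ whereas $\varrho_j$ concentrates at scale $1/j\ll\delta_0$, so no single averaging bridges the gap and the telescoping must be carried out while retaining the fine weight $|z|^{-p}$ (replacing it by the crude $\delta_0^{-p}$ would cost a factor $(j\delta_0)^p$) and while accommodating the possible vanishing of $\varrho$ at the origin, as happens for $\varrho_1=|x|^2\varphi/m_2$ in the application of Lemma~\ref{lem4.4}. Because $\int_{\mathbb{R}^n}\varrho=1$ and the weighted functional $\Lambda_j$ is scale invariant, the resulting constant $K_1$ depends only on $n$ and $p$, and combining the three steps completes the proof.
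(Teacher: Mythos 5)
Your proposal is correct and takes essentially the same route as the paper: the paper's entire proof of Lemma~\ref{lemA.2} consists of invoking the argument of \cite[Proposition~4.2]{Pon2004}, and your Jensen--Fubini reduction to the translation modulus $D(h)$, the rewriting of $\Lambda_j(g)$ as a weighted average of $D(z)^p/|z|^p$, and the deferral of the key translation estimate $D(h)^p\le K_1\delta_0^p\Lambda_j(g)$ for $|h|\le\delta_0$ to Ponce's proof are precisely the steps of that argument. You also correctly flag the genuine difficulties (the scale mismatch between $\delta_0$ and $1/j$, and the possible vanishing of $\varrho$ at the origin) that Ponce's averaging scheme is designed to overcome, which is more detail than the paper itself provides.
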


In order to use Lemma~\ref{lemA.2} with functions defined only on $\Omega$, we next study the behaviour of the functional $\Lambda_j(\cdot,\Omega)$ with respect to space truncation.

\begin{lemma}\label{lemA.3}
	Let $\psi\in C_0^\infty(\Omega)$ and consider $j\ge 1$ and $g\in L_p(\Omega)$ such that $\Lambda_j(g,\Omega)<\infty$. Then $g\psi \in L_p(\mathbb{R}^n)$ with
	\begin{align}
		\|g\psi\|_{L_p(\mathbb{R}^n)} & \le \|\psi\|_\infty \|g\|_p\,, \label{ap09} \\
		\Lambda_j(g\psi) & \le 2^p \left[ \|\psi\|_\infty \Lambda_j(g,\Omega) + \|\nabla\psi\|_\infty \|g\|_p^p \right]\,. \label{ap10}
	\end{align}
\end{lemma}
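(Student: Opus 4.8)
The plan is to treat the two estimates separately, the first being immediate and the second resting on a product-difference decomposition combined with a careful splitting of the domain of integration. Throughout I interpret $g\psi$ as extended by zero outside $\Omega$; since $\psi\in C_0^\infty(\Omega)$, the product $g\psi$ is supported in $K:=\mathrm{supp}\,\psi$, a compact subset of $\Omega$. The bound~\eqref{ap09} is then straightforward: the pointwise inequality $|g(x)\psi(x)|\le\|\psi\|_\infty|g(x)|$ yields, upon integration, $\|g\psi\|_{L_p(\mathbb{R}^n)}^p=\int_\Omega|g\psi|^p\,\mathrm{d}x\le\|\psi\|_\infty^p\|g\|_p^p$.

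For~\eqref{ap10} the idea is to exploit the algebraic identity
\begin{equation*}
	(g\psi)(x)-(g\psi)(y)=\psi(x)\big(g(x)-g(y)\big)+g(y)\big(\psi(x)-\psi(y)\big)\,,
\end{equation*}
to split the $p$-th power via $|a+b|^p\le 2^{p-1}(|a|^p+|b|^p)$, and to estimate the two resulting integrals. In the $g$-difference term I would bound $|\psi(x)|^p\le\|\psi\|_\infty^p$ and recognise the remaining integral as $\Lambda_j(g,\Omega)$. In the $\psi$-difference term I would invoke the Lipschitz estimate $|\psi(x)-\psi(y)|\le\|\nabla\psi\|_\infty|x-y|$ to cancel the factor $|x-y|^{-p}$ in the definition~\eqref{ap02} of $\Lambda_j$; what remains is controlled by $\|\nabla\psi\|_\infty^p\int_{\Omega}|g(y)|^p\big(\int_{\mathbb{R}^n}\varrho_j(x-y)\,\mathrm{d}x\big)\,\mathrm{d}y=\|\nabla\psi\|_\infty^p\|g\|_p^p$, using $\|\varrho_j\|_{L_1(\mathbb{R}^n)}=1$. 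Together these produce a bound of the announced form.

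The main obstacle is that $\Lambda_j(g\psi)$ integrates over $\mathbb{R}^n\times\mathbb{R}^n$, whereas $g$ is defined only on $\Omega$, so the identity above is literally meaningful only when both $x$ and $y$ lie in $\Omega$. I would therefore split the domain into the four pieces $\Omega\times\Omega$, $\Omega\times\Omega^c$, $\Omega^c\times\Omega$, and $\Omega^c\times\Omega^c$, with $\Omega^c:=\mathbb{R}^n\setminus\Omega$. On $\Omega^c\times\Omega^c$ the integrand vanishes, since $g\psi$ is supported in $K\subset\Omega$. On $\Omega\times\Omega$ the identity applies verbatim and gives exactly the two contributions described above. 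The delicate pieces are the mixed regions: on $\Omega\times\Omega^c$ one has $\psi(y)=0$, whence $(g\psi)(x)-(g\psi)(y)=g(x)\psi(x)=g(x)\big(\psi(x)-\psi(y)\big)$, which is purely a $\psi$-difference term and may be bounded by $|g(x)|\,\|\nabla\psi\|_\infty|x-y|$, producing again a contribution controlled by $\|\nabla\psi\|_\infty^p\|g\|_p^p$; the symmetric region $\Omega^c\times\Omega$ is handled identically.

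The crucial point is precisely this choice of grouping in the boundary layer. Writing the difference there as a $\psi$-difference, rather than as a $g$-difference, is what avoids any appearance of $\mathrm{dist}(K,\partial\Omega)$: the naive grouping would force an inverse power of the distance to $\partial\Omega$ and make the constant depend on the position of $\mathrm{supp}\,\psi$ inside $\Omega$. Keeping the $g$-difference confined to $\Omega\times\Omega$ and attributing all boundary-layer contributions to the (Lipschitz-controlled) $\psi$-difference is what delivers a constant depending on $\psi$ only through $\|\psi\|_\infty$ and $\|\nabla\psi\|_\infty$, as required by~\eqref{ap10}.
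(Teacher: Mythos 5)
Your proposal is correct and follows essentially the same route as the paper's proof: the same splitting of $\mathbb{R}^n\times\mathbb{R}^n$ into $\Omega\times\Omega$, the two mixed regions (which the paper merges into one term with a factor $2$ by symmetry of $\varrho_j$), and the vanishing region $\Omega^c\times\Omega^c$; the same product-difference identity with $|a+b|^p\le 2^{p-1}(|a|^p+|b|^p)$ on $\Omega\times\Omega$; and the same key observation that on the mixed regions $(g\psi)(x)=g(x)\big(\psi(x)-\psi(y)\big)$ since $\psi$ vanishes outside $\Omega$, so the singular factor $|x-y|^{-p}$ is absorbed by the Lipschitz bound on $\psi$ and the remainder is controlled via $\|\varrho_j\|_{L_1(\mathbb{R}^n)}=1$. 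Your remark on why the boundary-layer contribution must be attributed to the $\psi$-difference (to avoid constants depending on $\mathrm{dist}(\mathrm{supp}\,\psi,\partial\Omega)$) is a faithful account of what the paper's grouping achieves.
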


\begin{proof}
	We define $g\psi$ on $\mathbb{R}^n$ by setting $(g\psi)(x)=g(x)\psi(x)$ for $x\in\Omega$ and $(g\psi)(x)=0$ for $x\in\mathbb{R}^n\setminus\Omega$. Since~\eqref{ap09} is obvious, we turn to~\eqref{ap10} and deduce from the properties of $\psi$ and the symmetry of~$\varrho_j$ that
	\begin{align*}
		\Lambda_j(g\psi) & = \int_{\Omega\times\Omega} \frac{\big| (g\psi)(x) -(g\psi)(y) \big|^p}{|x-y|^p} \varrho_j(x-y)\,\mathrm{d}(x,y) \\
		& \qquad + 2 \int_{\Omega\times(\mathbb{R}^n\setminus\Omega)} \frac{\big| (g\psi)(x) \big|^p}{|x-y|^p} \varrho_j(x-y)\,\mathrm{d}(x,y) \\
		& \le 2^{p-1} \int_{\Omega\times\Omega} \big[ |\psi(x)|^p |g(x)-g(y)|^p + |g(y)|^p |\psi(x)-\psi(y)|^p \big] \frac{\varrho_j(x-y)}{|x-y|^p}\,\mathrm{d}(x,y) \\
		& \qquad + 2 \int_{\Omega\times(\mathbb{R}^n\setminus\Omega)} \frac{ |g(x)|^p |\psi(x)-\psi(y)|^p}{|x-y|^p} \varrho_j(x-y)\,\mathrm{d}(x,y) \\
		& \le 2^{p-1} \|\psi\|_\infty \Lambda_j(g,\Omega) + 2^{p-1} \|\nabla\psi\|_\infty \int_{\Omega\times\Omega} |g(y)|^p \varrho_j(x-y)\,\mathrm{d}(x,y) \\
		& \qquad + 2 \|\nabla\psi\|_\infty \int_{\Omega\times(\mathbb{R}^n\setminus\Omega)} |g(x)|^p \varrho_j(x-y)\,\mathrm{d}(x,y)\\
		& \le 2^p \|\psi\|_\infty \Lambda_j(g,\Omega) + 2^p \|\nabla\psi\|_\infty \int_{\Omega} |g(x)|^p \int_{\mathbb{R}^n} \varrho_j(x-y)\,\mathrm{d}(x,y)\,,
	\end{align*}
	and we use~\eqref{ap01} to complete the proof.
\end{proof}

After this preparation, we may start to investigate the compactness issue and, as in \cite[Appendix~B]{CES2023} and \cite[Appendix~B]{ElSk2023}, we first establish the relative compactness of the sequence $\big( (\psi f_j)*\Phi_\delta \big)_{j\ge 1}$ for each $\delta\in (0,1)$ and $\psi\in C_0^\infty(\Omega)$. 

\begin{lemma}\label{lemA.4}
		Let $(p,q,r)\in (1,\infty)^3$, $T>0$, and a sequence $(f_j)_{j\ge 1}$ of measurable functions on $(0,T)\times\Omega$ satisfying~\eqref{ap3-5}. For any non-negative function $\psi\in C_0^\infty(\Omega)$ and $\delta\in (0,1)$, the sequence $\big( (\psi f_j)*\Phi_\delta \big)_{j\ge 1}$ is relatively compact in $L_p((0,T)\times\mathbb{R}^n)$.
\end{lemma}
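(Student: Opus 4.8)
The plan is to apply a standard Aubin--Lions--Simon compactness argument to the spatially mollified sequence $g_j := (\psi f_j)*\Phi_\delta$, exploiting the fact that convolution with the \emph{fixed} mollifier $\Phi_\delta$ (from~\eqref{ap05}) provides spatial regularity uniformly in $j$. First I would extend $\psi f_j(t)$ by zero outside $\Omega$; since $\psi\in C_0^\infty(\Omega)$, this extension lies in $L_p(\mathbb{R}^n)$ with support in $\mathrm{supp}\,\psi$, so that $g_j(t)$ is smooth in space with support in the fixed compact set $\mathcal{K} := \mathrm{supp}\,\psi + \overline{B}_\delta(0)$. Fixing a bounded smooth open set $\mathcal{O}\supseteq\mathcal{K}$, Young's convolution inequality gives $\|g_j(t)\|_{L_p} \le \|\psi f_j(t)\|_{L_p}$ and $\|\nabla g_j(t)\|_{L_p} = \|(\psi f_j(t))*\nabla\Phi_\delta\|_{L_p} \le \|\nabla\Phi_\delta\|_{L_1}\,\|\psi f_j(t)\|_{L_p}$; using $\|\psi f_j(t)\|_{L_p}\le\|\psi\|_\infty\|f_j(t)\|_p$ and~\eqref{ap03}, the sequence $(g_j)_{j\ge 1}$ is bounded in $L_p((0,T),W_p^1(\mathcal{O}))$ by a constant depending on $\delta$, $\psi$, and $K_0$ but not on~$j$.

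Next I would control the time derivative by duality. For fixed $x\in\mathbb{R}^n$ the function $\varphi_x := \psi(\cdot)\Phi_\delta(x-\cdot)$ belongs to $C_0^\infty(\Omega)\subset W_q^1(\Omega)$, and $g_j(t,x) = \int_\Omega f_j(t,y)\varphi_x(y)\,\mathrm{d}y = \langle f_j(t),\varphi_x\rangle$, the pairing between $(W_q^1(\Omega))'$ and $W_q^1(\Omega)$. Since $\varphi_x$ does not depend on $t$ and $f_j$ has a time derivative in $L_r((0,T),W_q^1(\Omega)')$ by~\eqref{ap04}, the map $t\mapsto g_j(t,x)$ is absolutely continuous with $\partial_t g_j(t,x) = \langle\partial_t f_j(t),\varphi_x\rangle$, whence $|\partial_t g_j(t,x)|\le\|\partial_t f_j(t)\|_{(W_q^1)'}\|\varphi_x\|_{W_q^1}$. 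As $x\mapsto\|\varphi_x\|_{W_q^1(\Omega)}$ is bounded (with a bound depending only on $\delta,\psi,\Phi,q$) and supported in $\mathcal{K}$, it lies in $L_p(\mathcal{O})$, so that $\|\partial_t g_j(t)\|_{L_p(\mathcal{O})}\le C(\delta,\psi,q)\|\partial_t f_j(t)\|_{(W_q^1)'}$. Invoking~\eqref{ap04} again then shows that $(\partial_t g_j)_{j\ge 1}$ is bounded in $L_r((0,T),L_p(\mathcal{O}))$, uniformly in $j$.

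Finally, with $X := W_p^1(\mathcal{O})$, $B := L_p(\mathcal{O})$, and $Y := (W_{p'}^1(\mathcal{O}))'$ where $p' := p/(p-1)$, we have the compact embedding $X\hookrightarrow\hookrightarrow B$ (Rellich--Kondrachov, $\mathcal{O}$ bounded and smooth) and the continuous embedding $B\hookrightarrow Y$. Since $(g_j)_{j\ge 1}$ is bounded in $L_p((0,T),X)$ and $(\partial_t g_j)_{j\ge 1}$ is bounded in $L_r((0,T),L_p(\mathcal{O}))\hookrightarrow L_1((0,T),Y)$, the Aubin--Lions--Simon lemma \cite[Corollary~4]{Sim1987} yields the relative compactness of $(g_j)_{j\ge 1}$ in $L_p((0,T),B) = L_p((0,T)\times\mathcal{O})$; extending by zero (the supports lie in $\mathcal{O}$) gives relative compactness in $L_p((0,T)\times\mathbb{R}^n)$. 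I expect the main obstacle to be the rigorous justification in the second step that the spatial convolution commutes with the weak time derivative in the duality sense, namely the identity $\partial_t g_j(t,x) = \langle\partial_t f_j(t),\varphi_x\rangle$ and the ensuing $L_p$-in-$x$ bound; one must also take $Y$ strictly weaker than $B$ (the choice $Y=B$ is not admissible in Aubin--Lions--Simon), so that the available $L_r((0,T),L_p)$ bound is used only through the weaker space $(W_{p'}^1(\mathcal{O}))'$.
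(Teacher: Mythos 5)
Your proof is correct, but it takes a genuinely different route from the paper. The paper verifies the hypotheses of Simon's \emph{Theorem~1} in \cite{Sim1987} directly: spatial compactness of the time averages $x\mapsto\int_{t_1}^{t_2}g_j(t,x)\,\mathrm{d}t$ via the Riesz--Fr\'echet--Kolmogorov theorem (translates are controlled because the mollifier $\Phi_\delta$ is fixed), combined with a uniform $L_1$ estimate on time translates $g_j(\cdot+h)-g_j$ obtained from the duality pairing of $\partial_t f_j$ with the test functions $\psi\Phi_\delta(x-\cdot)\in W_q^1(\Omega)$. You instead upgrade the bounds so as to land in the standard Aubin--Lions--Simon framework: Young's inequality turns the $L_p$ bound of~\eqref{ap03} into a uniform $L_p((0,T),W_p^1)$ bound (since $\nabla g_j=(\psi f_j)*\nabla\Phi_\delta$), and the same duality pairing that the paper uses for time translates gives you $\partial_t g_j$ bounded in $L_r((0,T),L_p(\mathcal{O}))$, after which \cite[Corollary~4]{Sim1987} concludes. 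The key mechanism is identical in both arguments --- the family $x\mapsto\psi\,\Phi_\delta(x-\cdot)$ is bounded in $W_q^1(\Omega)$ with a constant that blows up only as $\delta\to0$, which is harmless since $\delta$ is fixed --- but your packaging is more streamlined, at the cost of invoking the stronger corollary rather than the primitive translate criterion; the paper's version is closer to first principles. One small correction: your parenthetical claim that the choice $Y=B$ is ``not admissible'' in Aubin--Lions--Simon is wrong. Simon's Corollary~4 only requires $X\hookrightarrow\hookrightarrow B\hookrightarrow Y$ with the second embedding continuous, and $Y=B$ is a perfectly legitimate choice (indeed, since you have $\partial_t g_j$ bounded in $L_r((0,T),L_p(\mathcal{O}))$ with $r>1$, you could take $Y=L_p(\mathcal{O})$ directly). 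This misconception is harmless here, as the weaker space $(W_{p'}^1(\mathcal{O}))'$ you chose also satisfies the hypotheses.
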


\begin{proof}
	Introducing $U_1 := \{ x\in\mathbb{R}^n\ :\ d(x,\Omega)<1 \}$, we note that, for $t\in(0,T)$ and $x\not\in U_1$,
	\begin{align*}
		g_j(t,x) & := \big[ (\psi f_j(t,\cdot))*\Phi_\delta \big](x) = \int_{\mathbb{R}^n} \psi(y) f_j(t,y) \Phi_\delta(x-y)\,\mathrm{d}y \\
		& = \int_{\Omega} \psi(y) f_j(t,y) \Phi_\delta(x-y)\,\mathrm{d}y = 0\,,
	\end{align*}
	since $\mathrm{supp}\, \Phi_\delta \subset B_\delta(0) \subset B_1(0)$ by~\eqref{ap05}. Therefore,
	\begin{equation}
		\mathrm{supp}\, g_j(t) \subset U_1\,, \qquad t\in (0,T)\,, \ j\ge 1\,. \label{ap11}
	\end{equation}
	On the one hand, it follows from~\eqref{ap03} and the Riesz-Fr\'echet-Kolmogorov theorem, see \cite[Theorem~IV.25 \& Corollary~IV.27]{Bre2011} for instance, that
	\begin{equation}
		\left( x \mapsto \int_{t_1}^{t_2} g_j(t,x)\,\mathrm{d}t \right)_{j\ge 1} \;\;\text{ is relatively compact in }\;\; L_p(U_1) \label{ap12}
	\end{equation}
	for all $0<t_1<t_2<T$. On the other hand, for $h\in (0,T)$, $j\ge 1$, and $(t,x)\in (0,T-h)\times U_1$,
	\begin{align*}
		|g_j(t+h,x)-g_j(t,x)| & = \left| \int_{\mathbb{R}^n} \psi(y) [f_j(t+h,y)-f_j(t,y)] \Phi_\delta(x-y)\,\mathrm{d}y \right| \\
		& = \left| \int_t^{t+h} \big\langle \partial_t f_j(s) , \psi \Phi_\delta(x-\cdot) \big\rangle\,\mathrm{d}s \right| \\
		& \le \int_t^{t+h} \|\partial_t f_j(s)\|_{(W_q^1)'} \|\psi \Phi_\delta(x-\cdot)\|_{W_q^1}\,\mathrm{d}s \,.
	\end{align*}
	Owing to the properties of $\psi$ and $\Phi_\delta$,
	\begin{align*}
		\|\psi \Phi_\delta(x-\cdot)\|_{W_q^1} & = \|\psi \Phi_\delta(x-\cdot)\|_{W_q^1(\mathbb{R}^n)} \\
		& \le \|\psi\|_\infty \|\Phi_\delta(x-\cdot)\|_{W_q^1(\mathbb{R}^n)} + \|\nabla\psi\|_\infty \|\Phi_\delta(x-\cdot)\|_{L_q(\mathbb{R}^n)} \\
		& \le \|\psi\|_\infty \|\Phi_\delta\|_{W_q^1(\mathbb{R}^n)} + \|\nabla\psi\|_\infty \|\Phi_\delta\|_{L_q(\mathbb{R}^n)} \\
		& \le 2 \|\psi\|_{W_\infty^1} \|\Phi\|_{W_q^1(\mathbb{R}^n)} \delta^{-[q(n+1)-n]/q}\,.
	\end{align*} 
	Therefore,
	\begin{equation*}
		|g_j(t+h,x)-g_j(t,x)| \le 2 \|\psi\|_{W_\infty^1} \|\Phi\|_{W_q^1(\mathbb{R}^n)} \delta^{-[q(n+1)-n]/q} \int_t^{t+h} \|\partial_t f_j(s)\|_{(W_q^1)'}\,\mathrm{d}s \,,
	\end{equation*}
	and we infer from~\eqref{ap04} and H\"older's inequality that
	\begin{equation*}
		|g_j(t+h,x)-g_j(t,x)| \le 2 \|\psi\|_{W_\infty^1} \|\Phi\|_{W_q^1(\mathbb{R}^n)} h^{(r-1)/r} K_0^{1/r} \delta^{-[q(n+1)-n]/q}\,.
	\end{equation*}
	Integrating with respect to $(t,x)\in (0,T-h)\times U_1$, we are led to
	\begin{align}
		& \int_0^{T-h} \int_{U_1} |g_j(t+h,x)-g_j(t,x)|\,\mathrm{d}x\mathrm{d}t \nonumber \\
		& \hspace{2cm} \le 2 T |U_1| \|\psi\|_{W_\infty^1} \|\Phi\|_{W_q^1(\mathbb{R}^n)} h^{(r-1)/r} K_0^{1/r} \delta^{-[q(n+1)-n]/q}\,. \label{ap13}
	\end{align}
	Owing to~\eqref{ap12} and~\eqref{ap13}, we are in a position to apply \cite[Theorem~1]{Sim1987} and conclude that $(g_j)_{j\ge 1}$ is relatively compact in $L_p((0,T)\times U_1)$ and thus in $L_p((0,T)\times\mathbb{R}^n)$ according to~\eqref{ap11}.
\end{proof}

The next step is the relative compactness of $(f_j)_{j\ge 1}$ in $L_{p,\mathrm{loc}}((0,T)\times\Omega)$.

\begin{lemma}\label{lemA.5}
	Let $(p,q,r)\in (1,\infty)^3$, $T>0$, and a sequence $(f_j)_{j\ge 1}$ of measurable functions on $(0,T)\times\Omega$ satisfying~\eqref{ap3-5}. For any non-negative function $\psi\in C_0^\infty(\Omega)$, the sequence $\big( \psi f_j \big)_{j\ge 1}$ is relatively compact in $L_p((0,T)\times\Omega)$.
\end{lemma}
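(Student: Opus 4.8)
The plan is to combine the $j$-uniform mollification estimate furnished by Lemma~\ref{lemA.2} and Lemma~\ref{lemA.3} with the relative compactness of the mollified sequence from Lemma~\ref{lemA.4}, in the classical ``approximate, then invoke compactness'' scheme. Since $L_p((0,T)\times\Omega)$ is complete, it suffices to show that $(\psi f_j)_{j\ge 1}$ is totally bounded; and as $\psi$ is compactly supported in $\Omega$, we may regard each $\psi f_j$ as a function on $\mathbb{R}^n$ (extended by zero) and identify its $L_p((0,T)\times\Omega)$-norm with its $L_p((0,T)\times\mathbb{R}^n)$-norm throughout.

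First I would record the error estimate for the mollification by $\Phi_\delta$, uniformly in $j$. Fix $\delta_0\in(0,1)$. For a.e.\ $t\in(0,T)$, Lemma~\ref{lemA.3} shows that $\psi f_j(t)\in L_p(\mathbb{R}^n)$ with $\Lambda_j(\psi f_j(t))\le C\big(\Lambda_j(f_j(t),\Omega)+\|f_j(t)\|_p^p\big)<\infty$, the constant $C$ depending only on $\psi$. As soon as $j$ is large enough that $\int_{B_{\delta_0}(0)}\varrho_j\,\mathrm{d}x\ge 1/2$, Lemma~\ref{lemA.2} applies to $g=\psi f_j(t)$ for every $\delta\in(0,\delta_0)$ and yields $\|\Phi_\delta*(\psi f_j(t))-\psi f_j(t)\|_{L_p(\mathbb{R}^n)}^p\le K_1\delta_0^p\,\Lambda_j(\psi f_j(t))$. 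Integrating in time and using the uniform bound~\eqref{ap03}, I obtain
\begin{equation*}
	\big\| \Phi_\delta*(\psi f_j)-\psi f_j \big\|_{L_p((0,T)\times\mathbb{R}^n)}^p \le C(\psi)\,\delta_0^p\,,
\end{equation*}
where $C(\psi)$ is independent of $j$ (within the range of indices satisfying~\eqref{ap06}) and of $\delta\in(0,\delta_0)$. The decisive feature is that this error is governed by $\delta_0$ alone, uniformly in $j$.

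Next, given $\varepsilon>0$, I would choose $\delta_0$ so small that $C(\psi)\delta_0^p<\varepsilon^p$. Because $\varrho_j(x)=j^n\varrho(jx)$ and $\int_{\mathbb{R}^n}\varrho=1$, we have $\int_{B_{\delta_0}(0)}\varrho_j\,\mathrm{d}x=\int_{B_{j\delta_0}(0)}\varrho\,\mathrm{d}x\to 1$ as $j\to\infty$, so there is $j_0$ for which~\eqref{ap06} holds for all $j\ge j_0$. Fixing a single $\delta\in(0,\delta_0)$, Lemma~\ref{lemA.4} guarantees that $(\Phi_\delta*(\psi f_j))_{j\ge 1}$ is relatively compact in $L_p((0,T)\times\mathbb{R}^n)$, whence its restriction to $\Omega$ is relatively compact in $L_p((0,T)\times\Omega)$. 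Thus every member of $\{\psi f_j:j\ge j_0\}$ lies within $\varepsilon$ of this relatively compact set; extracting a finite $\varepsilon$-net of the latter produces a finite $2\varepsilon$-net of $\{\psi f_j:j\ge j_0\}$. Adjoining the finitely many terms with $1\le j<j_0$ keeps the collection totally bounded, and completeness of $L_p((0,T)\times\Omega)$ gives the claimed relative compactness.

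I expect the main obstacle to be precisely the $j$-dependence of hypothesis~\eqref{ap06}: the mollification estimate of Lemma~\ref{lemA.2} only becomes available once the mass of $\varrho_j$ is sufficiently concentrated near the origin, i.e.\ for $j\ge j_0(\delta_0)$. This dictates the two-step structure---treating the tail $\{j\ge j_0\}$ via the uniform approximation and separately absorbing the finitely many small-$j$ terms---and requires care that the constant $C(\psi)$ depend only on $\delta_0$, not on the particular $\delta<\delta_0$ used in Lemma~\ref{lemA.4}, nor on $j$ in the admissible range. These uniformities are exactly what Lemma~\ref{lemA.2} together with~\eqref{ap03} provide.
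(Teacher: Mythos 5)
Your proposal is correct and follows essentially the same route as the paper: extend $\psi f_j$ by zero, use Lemma~\ref{lemA.3} to control $\Lambda_j(\psi f_j(t))$, apply Lemma~\ref{lemA.2} (once~\eqref{ap06} holds) together with~\eqref{ap03} to get a mollification error of order $\delta_0^p$ uniformly in $j$, and then invoke the relative compactness of $\big((\psi f_j)*\Phi_\delta\big)_{j\ge 1}$ from Lemma~\ref{lemA.4}. The only difference is presentational: you spell out the total-boundedness/$\varepsilon$-net argument and the absorption of the finitely many indices $j<j_0$, which the paper compresses into the statement that $(\psi f_j)_{j\ge 1}$ is arbitrarily close to relatively compact sequences and hence relatively compact.
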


\begin{proof}
	We argue as in the proof of \cite[Proposition~4.2]{Pon2004} and consider $\delta_0\in (0,1)$. The properties of the sequence $(\varrho_j)_{j\ge 1}$ entail that there is $J_{\delta_0}\ge 1$ such that
	\begin{equation*}
		\int_{B_{\delta_0}(0)} \varrho_j(x)\,\mathrm{d}x \ge \frac{1}{2}\,, \qquad j\ge J_{\delta_0}\,,
	\end{equation*}
	so that $\varrho_j$ satisfies~\eqref{ap06} for any $j\ge J_{\delta_0}$. Since $\psi f_j(t)$ belongs to $L_p(\mathbb{R}^n)$ for $j\ge 1$ and a.e. $t\in (0,T)$, we infer from~\eqref{ap03}, Lemma~\ref{lemA.2}, and Lemma~\ref{lemA.3} that, for $\delta\in (0,\delta_0)$ and $j\ge J_{\delta_0}$, 
	\begin{align*}
		& \int_0^T \int_{\mathbb{R}^n} \Big| \big[ (\psi f_j(t,\cdot))*\Phi_\delta \big](x) - \psi(x) f_j(t,x) \Big|^p\,\mathrm{d}x\mathrm{d}t \\
		& \qquad \le K_1 \delta_0^p \int_0^T \Lambda_j(\psi f_j(t))\,\mathrm{d}t \\
		& \qquad \le (2\delta_0)^p K_1 \left[ \|\psi\|_\infty \int_0^T \Lambda_j(f_j(t),\Omega)\,\mathrm{d}t + \|\nabla\psi\|_\infty \int_0^T \|f_j(t)\|_p^p\,\mathrm{d}t \right] \\
		& \qquad \le K_0 K_1 (2\delta_0)^p \|\psi\|_{W_\infty^1}\,.
	\end{align*}
	Recalling Lemma~\ref{lemA.4}, the above inequality implies that $(\psi f_j)_{j\ge 1}$ is arbitrarily close to relatively compact sequences in $L_p((0,T)\times\mathbb{R}^n)$, so that it is also relatively compact in $L_p((0,T)\times\mathbb{R}^n)$ and thus in $L_p((0,T)\times\Omega)$.
\end{proof}

In view of Lemma~\ref{lemA.5}, we are left to handle the behaviour of $(f_j)_{j\ge 1}$ near the boundary of $\Omega$. In this direction, we recall the following result established in \cite[Lemma~5.1]{Pon2004}. 

\begin{lemma}[{\cite[Lemma~5.1]{Pon2004}}] \label{lemA.6}
	There are $r_0>0$ depending on $\Omega$ and $\varrho$ and constants $(K_2,K_3)\in (0,\infty)^2$ depending on $n$, $\Omega$, and $p$ with the following property: given $r\in (0,r_0)$, there is $I_r\ge 1$ such that, for every $j\ge I_r$ and $g\in L_p(\Omega)$, 
	\begin{equation*}
		\int_\Omega |g(x)|^p\,\mathrm{d}x \le K_2 \int_{\Omega_r} |g(x)|^p\,\mathrm{d}x + K_3 r^p \Lambda_j(g,\Omega)\,,
	\end{equation*}
	where $\Omega_r := \{ x\in\Omega\ :\ d(x,\partial\Omega) >r \}$.
\end{lemma}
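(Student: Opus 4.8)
The plan is to recognize this as a nonlocal Poincaré/Hardy-type estimate near $\partial\Omega$ — indeed, it is precisely \cite[Lemma~5.1]{Pon2004} — and to reconstruct that argument. Since the two terms on the right already dominate $\int_{\Omega_r}|g|^p$ trivially, everything reduces to controlling the boundary strip, i.e. to proving
\begin{equation*}
	\int_{\Omega\setminus\Omega_r} |g(x)|^p\,\mathrm{d}x \le (K_2-1) \int_{\Omega_r} |g(x)|^p\,\mathrm{d}x + K_3 r^p \Lambda_j(g,\Omega)\,.
\end{equation*}
The geometric input is a uniform interior cone condition on $\partial\Omega$: after a partition of unity and local flattening, there are $r_0>0$, an aperture, and finitely many reference directions such that, for $x$ in the boundary strip, a whole cone of inward directions $e$ satisfies $x+te\in\Omega$ for $t\in(0,\tau)$ while $x+te\in\Omega_r$ once $t\ge c_\ast r$, with $\tau,c_\ast$ depending only on $n$ and $\Omega$. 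This is the step that fixes $r_0$ and encodes the smoothness of $\partial\Omega$.

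First I would pass from a value in the strip to an interior value plus increments. Fixing such an $x$, I choose $y=x+Te$ with $T\approx r$ ranging over a ball $R(x)\subset\Omega_r$ of radius $\approx r$, set $z_k:=(1-k/N)x+(k/N)y$ for $0\le k\le N$, and pick the number of steps $N\approx jr/\rho_0$ so that each step has length $|z_k-z_{k+1}|=\rho_0/j$, where $\rho_0>0$ is fixed so that $\varrho$ is bounded below by a positive constant on the sphere of radius $\rho_0$ (possible since $\varrho$ is continuous, radial, and positive on a punctured neighbourhood of the origin). Jensen's inequality then yields, for a.e. choice of endpoints,
\begin{equation*}
	|g(x)-g(y)|^p \le N^{p-1} \sum_{k=0}^{N-1} |g(z_k)-g(z_{k+1})|^p\,,
\end{equation*}
so that $|g(x)|^p \le 2^{p-1}|g(y)|^p + 2^{p-1}N^{p-1}\sum_{k} |g(z_k)-g(z_{k+1})|^p$.

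I would then integrate this in $x$ over the strip and average $y$ over $R(x)$ against the uniform probability measure $\mathrm{d}\mu_x(y)=|R(x)|^{-1}\mathbf{1}_{R(x)}(y)\,\mathrm{d}y$. The first term is bounded by $\int_{\Omega_r}|g|^p$ via Fubini and the bounded overlap of the family $\{R(x)\}$, producing $K_2$. For the increment term the key computation is the change of variables $(x,y)\mapsto(z_k,z_{k+1})$: for each fixed $k$ this map is linear and injective with Jacobian $(s_{k+1}-s_k)^{n}=N^{-n}$, whence $\mathrm{d}x\,\mathrm{d}y=N^{n}\,\mathrm{d}z_k\,\mathrm{d}z_{k+1}$. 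Since $|R(x)|\approx r^n$ and $N^n r^{-n}\approx j^n$, each edge integral is controlled by
\begin{equation*}
	C j^n \int_{\{|z-z'|\approx \rho_0/j\}\cap(\Omega\times\Omega)} |g(z)-g(z')|^p\,\mathrm{d}z\,\mathrm{d}z'\,,
\end{equation*}
and the lower bound $\varrho_j(z-z')\ge c\,j^n$ together with $|z-z'|^{-p}\approx (j/\rho_0)^p$ on this set turns the integrand into a fraction of $\Lambda_j(g,\Omega)$, giving each edge integral $\le C j^{-p}\Lambda_j(g,\Omega)$. Summing the $N\approx jr$ edges and multiplying by the Jensen factor $N^{p-1}$ produces $N^{p}j^{-p}\approx (jr)^p j^{-p}=r^p$, which is exactly the asserted $r^p\Lambda_j(g,\Omega)$ scaling; the threshold $I_r$ is then dictated by $N\ge 1$, i.e. $j\gtrsim \rho_0/r$, together with the requirement that the steps fit inside the strip, which explains why $I_r$ depends on $r$.

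The hard part is making the chaining rigorous for a mere $L_p$ function, whose pointwise values are undefined: Jensen's inequality along a single segment is meaningless in isolation, and one must keep both endpoints $x$ and $y$ integrated throughout, so that $g$ is evaluated only under integrals and every a.e. statement is justified by Fubini after the linear change of variables. The second delicate point is the uniform boundary geometry, which is what restricts us to $r<r_0$ and what the interior cone condition supplies; the smoothness of $\partial\Omega$ is used precisely here. All remaining estimates are routine bookkeeping of constants, and the conclusion coincides with \cite[Lemma~5.1]{Pon2004}.
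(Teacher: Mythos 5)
First, note what you are comparing against: the paper never proves this lemma at all. It is quoted, with attribution, as \cite[Lemma~5.1]{Pon2004}, and its proof is delegated entirely to that reference, so your proposal has to be judged as a reconstruction of Ponce's argument rather than against any internal proof. As a reconstruction, the skeleton is sound: the reduction to the boundary strip, the uniform interior cone condition for a smooth boundary, the discrete Jensen inequality along a chain of $N\approx jr/\rho_0$ points, the linear change of variables $(x,y)\mapsto(z_k,z_{k+1})$ with Jacobian $N^{-n}$, the bounded-overlap bound for the family $\{R(x)\}$, and the final scaling $N^{p}j^{-p}\approx r^{p}$ all check out, and you correctly identify that $g$ must only ever be evaluated under integrals, with both endpoints of the chain kept integrated.

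There is, however, a genuine gap between what your argument proves and what the lemma asserts, namely the dependence of the constants. Your edge estimate rests on the lower bound $\varrho_j(z-z')\ge c\,j^{n}$ on the annulus $|z-z'|\approx\rho_0/j$, so the resulting $K_3$ carries factors of order $c^{-1}\rho_0^{-n}$ (and the $(r/\rho_0)^p$ scaling contributes $\rho_0^{-p}$); that is, your $K_3$ depends on $\varrho$. The statement requires $K_2,K_3$ to depend only on $n$, $\Omega$, and $p$ --- only $r_0$ and $I_r$ are allowed to see $\varrho$. To obtain kernel-independent constants one cannot pin the step length at a radius where $\varrho$ happens to be bounded below; one must instead average over step lengths and directions with the kernel itself as the measure, exploiting $\|\varrho_j\|_{L_1(\mathbb{R}^n)}=1$ from~\eqref{ap01} together with the fact that, for $j\ge I_r$, essentially all of the mass of $\varrho_j$ sits at radii much smaller than $r$; this is also why no pointwise lower bound on $\varrho$ appears among the hypotheses. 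Two mitigating remarks. In the only place the paper invokes the lemma --- the proof of Proposition~\ref{propA.1} --- the kernel $\varrho$ is fixed, so your weaker, $\varrho$-dependent version would actually suffice there, though it would not suffice to prove Lemma~\ref{lemA.6} as stated. Separately, your parenthetical justification for the existence of $\rho_0$ (that $\varrho$ is ``positive on a punctured neighbourhood of the origin'') is not implied by the hypotheses of Appendix~\ref{secA}, since a non-negative radial density with unit integral may vanish on a whole ball around the origin; the property you actually need --- a sphere $|w|=\rho_0$ on which $\varrho$ is bounded below --- does follow, but from continuity, radial symmetry, and $\int_{\mathbb{R}^n}\varrho(x)\,\mathrm{d}x=1$ alone.
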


\begin{proof}[Proof of Proposition~\ref{propA.1}]
	We argue as in the proof of \cite[Theorem~1.2]{Pon2004} with the help of Lemma~\ref{lemA.6}. We first readily infer from~\eqref{ap03}, Lemma~\ref{lemA.5}, and a diagonal process that there are a subsequence $(j_k)_{k\ge 1}$, $j_k\to\infty$, and $f\in L_p((0,T)\times\Omega)$ such that
	\begin{equation}
		\begin{split}
			f_{j_k} & \rightharpoonup f \;\;\text{ in }\;\; L_p((0,T)\times\Omega)\,, \\
			f_{j_k} & \longrightarrow f \;\;\text{ in }\;\; L_p((0,T)\times\Omega_r)\,, 
		\end{split} \label{ap14}
	\end{equation}
	for each $r\in (0,r_0)$. Moreover, arguing as in the proof of \cite[Equation~(24)]{Pon2004}, we deduce from~\eqref{ap03} and~\eqref{ap14} that
	\begin{equation}
		f \in L_p((0,T), W_p^1(\Omega))\,. \label{ap15}
	\end{equation}
	Now, let $r\in (0,r_0)$ and $j\ge j_r$. It follows from~\eqref{ap03}, \eqref{ap15},  Lemma~\ref{lemA.6} (with $g=f_{j_k}-f$), and \cite[Theorem~1]{BBM2001} that
	\begin{align*}
		\int_0^T \int_\Omega \big| f_{j_k}(t,x) - f(t,x) \big|^p\,\mathrm{d}x\mathrm{d}t & \le K_2 \int_0^T \int_{\Omega_r} \big| f_{j_k}(t,x) - f(t,x) \big|^p\,\mathrm{d}x\mathrm{d}t \\
		& \qquad + K_3 r^p \int_0^T \Lambda_{j_k}\big( (f_{j_k}-f)(t),\Omega \big)\,\mathrm{d}t \\
		& \le K_2 \int_0^T \int_{\Omega_r} \big| f_{j_k}(t,x) - f(t,x) \big|^p\,\mathrm{d}x\mathrm{d}t \\
		& \qquad + 2^{p-1} K_3 r^p \int_0^T \Big[ \Lambda_{j_k}\big(f_{j_k}(t),\Omega \big) + \Lambda_{j_k}\big(f(t),\Omega \big) \Big]\,\mathrm{d}t \\
		& \le K_2 \int_0^T \int_{\Omega_r} \big| f_{j_k}(t,x) - f(t,x) \big|^p\,\mathrm{d}x\mathrm{d}t \\
		& \qquad + 2^{p-1} K_3 r^p \left[ K_0 + K_4 \|\varrho\|_{L_1(\mathbb{R}^n)} \int_0^T \|f(t)\|_{W_p^1}\,\mathrm{d}t \right]\,,
	\end{align*}
	where $K_4$ is a positive constant depending only on $\Omega$ and $p$. Recalling~\eqref{ap01}, we may let $k\to\infty$ in the above inequality and use~\eqref{ap14} to deduce that
	\begin{equation*}
		\limsup_{k\to\infty} \int_0^T \int_\Omega \big| f_{j_k}(t,x) - f(t,x) \big|^p\,\mathrm{d}x\mathrm{d}t \le 2^{p-1} K_3 r^p \left[ K_0 + K_4 \int_0^T \|f(t)\|_{W_p^1}\,\mathrm{d}t \right]
	\end{equation*}
	for all $r\in (0,r_0)$. We then take the limit $r\to 0$ in the above inequality to complete the proof.
\end{proof}

\section{Diffusive limit in $L_1$: an auxiliary result}\label{secB}

We recall that, for $j\ge 1$, the kernel~$\chi_j$ and the nonlocal operator~$\Gamma_{\chi_j}$ are defined by~\eqref{d1000} and~\eqref{d100}, respectively.

\begin{proposition}\label{propB.1}
	Let $\Omega$ be a bounded domain of $\mathbb{R}^n$ with $C^{2+\alpha}$-smooth boundary $\partial\Omega$, $T>0$, and $d>0$. We consider a sequence $(S_j)_{j\ge 1}$ in $L_\infty((0,T)\times\Omega)$ and a sequence $(w_j^0)_{j\ge 1}$ in $L_\infty(\Omega)$ such that 
	\begin{equation}
		\lim_{j\to\infty} \big[ \|S_j-S\|_{L_1((0,T)\times\Omega)} + \|w_j^0-w^0\|_1 \big] = 0 \label{bp01}
	\end{equation}
	for some $S\in L_1((0,T)\times\Omega)$ and $w^0\in L_1(\Omega)$. For $j\ge 1$, let $w_j\in C^1([0,\infty),L_\infty(\Omega))$ be the solution to
	\begin{equation}
		\begin{split}
			\partial_t w_j & = d \Gamma_{\chi_j}w_j + S_j \;\;\text{ in }\;\; (0,T)\times\Omega\,, \\
			w_j(0) & = w_j^0 \;\;\text{ in }\;\; \Omega\,,
		\end{split}\label{bp02}
	\end{equation}
	where $\chi_j$ and $\Gamma_{\chi_j}$ are defined in~\eqref{d1000} and~\eqref{d100}, respectively. If there is $w_\infty\in L_1((0,T)\times\Omega)$ such that
	\begin{equation}
		\lim_{j\to\infty} \|w_j-w_\infty\|_{L_1((0,T)\times\Omega)} = 0\,, \label{bp03}
	\end{equation}
	then $w_\infty=w$, where $w$ denotes the unique mild solution to 
	\begin{equation}
		\begin{split}
			\partial_t w & = D \Delta w + S \;\;\text{ in }\;\; (0,T)\times\Omega\,, \\
			\nabla w\cdot \mathbf{n} & = 0 \;\;\text{ on }\;\; (0,T)\times\partial\Omega\,, \\
			w(0) & = w^0 \;\;\text{ in }\;\; \Omega\,,
		\end{split}\label{bp04}
	\end{equation}
	with $\mathbf{n}$  denoting  the outward normal unit vector field to $\partial\Omega$ and
	\begin{equation*}
		D := \frac{d}{2n} \int_{\mathbb{R}^n} |z|^2 \varphi(z)\,\mathrm{d}z\,.
	\end{equation*}
\end{proposition}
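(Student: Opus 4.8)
The plan is to characterise $w_\infty$ through a space--time weak formulation of~\eqref{bp02} and to match it with the mild solution~$w$ of~\eqref{bp04} by testing against the eigenfunctions of the Neumann Laplacian. I would fix a time weight $\theta\in C^1([0,T])$ with $\theta(T)=0$ together with a test function $\psi\in C^2(\bar\Omega)$ satisfying the homogeneous Neumann condition $\nabla\psi\cdot\mathbf{n}=0$ on $\partial\Omega$. Multiplying~\eqref{bp02} by $\theta\psi$, integrating over $(0,T)\times\Omega$, integrating by parts in time (which makes the datum $w_j^0$ appear through $\theta(0)$), and exploiting the symmetry $\chi_j(x,y)=\chi_j(y,x)$ to transfer the nonlocal operator onto the test function (so that $\int_\Omega \Gamma_{\chi_j}w_j\,\psi=\int_\Omega w_j\,\Gamma_{\chi_j}\psi$), I obtain the identity
\begin{equation*}
-\int_\Omega w_j^0\theta(0)\psi\,\mathrm{d}x-\int_0^T\!\!\int_\Omega w_j\theta'\psi\,\mathrm{d}x\mathrm{d}t
= d\int_0^T\!\!\int_\Omega w_j\,\theta\,\Gamma_{\chi_j}\psi\,\mathrm{d}x\mathrm{d}t+\int_0^T\!\!\int_\Omega S_j\theta\psi\,\mathrm{d}x\mathrm{d}t .
\end{equation*}

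The heart of the matter is the behaviour of $\Gamma_{\chi_j}\psi$. After the change of variables $z=j(x-y)$ one has $\Gamma_{\chi_j}\psi(x)=j^2\int \varphi(z)\big(\psi(x-z/j)-\psi(x)\big)\,\mathrm{d}z$, the integration running over those $z$ for which $x-z/j\in\Omega$. For $x$ in the interior, at distance larger than $\mathrm{diam}(\mathrm{supp}\,\varphi)/j$ from $\partial\Omega$, the integration set is the full support of $\varphi$; a second-order Taylor expansion then kills the first-order term by radial symmetry of $\varphi$ (so that $\int \varphi(z)z\,\mathrm{d}z=0$), while the Hessian term reduces, again by radial symmetry, to $\tfrac{m_2}{2n}\Delta\psi(x)$. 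Hence $d\,\Gamma_{\chi_j}\psi\to D\Delta\psi$ pointwise on $\Omega$ with the correct constant $D=\tfrac{d\,m_2}{2n}$, and the same expansion bounds $\Gamma_{\chi_j}\psi$ uniformly in $j$ on the interior. The delicate point is the boundary layer $\{x:\mathrm{dist}(x,\partial\Omega)\le C/j\}$, of measure $\mathcal O(1/j)$, where the integration set is only a portion of $\mathrm{supp}\,\varphi$ and the first-order term no longer cancels: a priori it contributes a term of size $\mathcal O(j)$ proportional to $\nabla\psi(x)\cdot V_x$, where $V_x$ denotes the integral of $\varphi(z)z$ over the truncated set. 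Here the Neumann condition on $\psi$ is essential: using the $C^{2+\alpha}$-regularity of $\partial\Omega$, the vector $V_x$ is, up to lower order, aligned with the outward normal $\mathbf{n}$ at the nearest boundary point, while $\nabla\psi(x)\cdot\mathbf{n}=\mathcal O(\mathrm{dist}(x,\partial\Omega))=\mathcal O(1/j)$ because $\nabla\psi\cdot\mathbf{n}$ vanishes on $\partial\Omega$ and $\psi\in C^2(\bar\Omega)$. This turns the $\mathcal O(j)$ contribution into an $\mathcal O(1)$ one, so that $\big(\Gamma_{\chi_j}\psi\big)_{j\ge1}$ is bounded in $L_\infty(\Omega)$ uniformly in $j$.

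Equipped with these two facts, I would pass to the limit in the identity above: the terms involving $w_j^0$, $w_j$ and $S_j$ converge thanks to~\eqref{bp01}, \eqref{bp03} and the boundedness of $\theta,\psi$, while for the nonlocal term I split $w_j\,\Gamma_{\chi_j}\psi-w_\infty D\Delta\psi=(w_j-w_\infty)\Gamma_{\chi_j}\psi+w_\infty(\Gamma_{\chi_j}\psi-D\Delta\psi)$, controlling the first summand by $\|w_j-w_\infty\|_{L_1}\|\Gamma_{\chi_j}\psi\|_\infty\to0$ via the uniform $L_\infty$-bound, and the second by dominated convergence (uniform bound together with a.e.\ convergence on the full-measure interior, dominated by $C|w_\infty|\in L_1$). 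This produces the weak formulation of the Neumann problem~\eqref{bp04} for $w_\infty$. To conclude $w_\infty=w$, I would specialise $\psi$ to the Neumann eigenfunctions $\psi_k$ (smooth by elliptic regularity on the $C^{2+\alpha}$ domain, with $\nabla\psi_k\cdot\mathbf{n}=0$ and $\Delta\psi_k=-\mu_k\psi_k$): the weak identity then says that $a_k(t):=\int_\Omega w_\infty(t)\psi_k$ solves the scalar Cauchy problem $a_k'+D\mu_k a_k=\int_\Omega S\psi_k$, $a_k(0)=\int_\Omega w^0\psi_k$, which is exactly the one satisfied by the coefficients of the mild solution $w$; uniqueness for this ODE and completeness of $(\psi_k)_{k}$ in $L_2(\Omega)$ force $w_\infty=w$.

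The main obstacle is precisely the uniform $L_\infty$-bound on $\Gamma_{\chi_j}\psi$ within the boundary layer: unlike the torus setting treated in~\cite{CES2023, ElSk2023}, the presence of $\partial\Omega$ generates a potentially diverging first-order contribution, and it is only the combination of the Neumann condition $\nabla\psi\cdot\mathbf{n}=0$ with the smoothness of the boundary that supplies the compensating factor $\mathcal O(1/j)$.
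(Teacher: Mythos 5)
Your proposal is correct in substance, but it takes a genuinely different route from the paper's proof of Proposition~\ref{propB.1}. The paper argues by approximation and stability: it picks smooth data $(\Sigma,W^0)$, solves the limit Neumann problem~\eqref{bp04} classically, compares $w_j$ directly with this classical solution $W$ in $L_1$ using the dissipativity of $\Gamma_{\chi_j}$ (a sign-function test combined with the symmetry of $\chi_j$), and estimates the consistency error $F_j=d\Gamma_{\chi_j}W-D\Delta W$ applied to $W$ itself, splitting it into an interior part of size $O(j^{-\alpha})$ and a boundary-layer part that is $O(1)$ on the set $\Omega_{1/j}$ of measure $O(1/j)$, the latter via \cite[Lemma~3.14]{AVMRTM2010}; the conclusion then follows from the $L_1$-contraction of the heat semigroup and density of smooth data. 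You instead dualize: using the symmetry of $\chi_j$ you move the nonlocal operator onto a fixed test function $\psi$ with $\nabla\psi\cdot\mathbf{n}=0$, prove the consistency statement $d\,\Gamma_{\chi_j}\psi\to D\Delta\psi$ pointwise together with a uniform $L_\infty$-bound, and identify the limit through the Neumann eigenfunction expansion. Note that the technical heart is identical in both arguments: your boundary-layer analysis (near-alignment of the truncated first moment of $\varphi$ with the normal at the projected point, compensated by $\nabla\psi\cdot\mathbf{n}=O(1/j)$ thanks to the Neumann condition and $C^2$-regularity) is exactly the mechanism behind \cite[Lemma~3.14]{AVMRTM2010}, which the paper invokes for the solution $W$ rather than for a test function. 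What the paper's route buys is a quantitative estimate on $\sup_{t}\|w_j(t)-W(t)\|_1$ and no recourse to spectral theory; what your route buys is that you never need classical (Schauder) solvability of the limit problem nor the $L_1$-dissipativity computation, only the self-adjointness of $\Gamma_{\chi_j}$ and elliptic regularity of the Neumann eigenfunctions on a $C^{2+\alpha}$ domain.

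One step should be tightened: at the end you invoke completeness of $(\psi_k)_k$ in $L_2(\Omega)$, but $w_\infty(t)$ and the mild solution $w(t)$ are only known to lie in $L_1(\Omega)$ (with $S\in L_1((0,T)\times\Omega)$ and $n$ large, the Duhamel term need not be square-integrable). You therefore need the $L_1$-version of the completeness statement: if $g\in L_1(\Omega)$ satisfies $\int_\Omega g\psi_k\,\mathrm{d}x=0$ for all $k$, then $g=0$. This is true and can be patched in a standard way: $e^{t\Delta_N}g\in L_2(\Omega)$ for $t>0$ by ultracontractivity of the Neumann heat semigroup, its eigenfunction coefficients vanish by self-adjointness, hence $e^{t\Delta_N}g=0$, and $e^{t\Delta_N}g\to g$ in $L_1(\Omega)$ as $t\to0$. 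With this patch, and the routine verification that the coefficients of the mild solution solve the same scalar ODEs, your identification argument is complete.
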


\begin{proof}
	The proof is inspired by \cite[Sections~3.2.2 \&~3.2.4]{AVMRTM2010}. We consider $\Sigma\in C_0^\infty((0,T)\times\Omega)$, $W^0\in C^{2+\alpha}(\bar{\Omega})$ and let $W\in C^{1+\alpha/2,2+\alpha}([0,T]\times\bar{\Omega})$ be the classical solution to 
	\begin{equation}
		\begin{split}
			\partial_t W & = D \Delta W + \Sigma \;\;\text{ in }\;\; (0,T)\times\Omega\,, \\
			\nabla W\cdot \mathbf{n} & = 0 \;\;\text{ on }\;\; (0,T)\times\partial\Omega\,, \\
			W(0) & = W^0 \;\;\text{ in }\;\; \Omega\,.
		\end{split}\label{bp05}
	\end{equation}
	
	On the one hand, it follows from~\eqref{bp04}, \eqref{bp05}, and the contraction property of the heat semigroup in $L_1(\Omega)$ that
	\begin{equation}
		\sup_{t\in [0,T]} \|W(t)-w(t)\|_1 \le \|W^0-w^0\|_1 + \|S-\Sigma\|_{L_1((0,T)\times\Omega)}\,. \label{bp06}
	\end{equation}
	
	On the other hand, it follows from~\eqref{bp02} and~\eqref{bp05} that $w_j-W$ solves
		\begin{equation}
		\begin{split}
			\partial_t (w_j-W) & = d\Gamma_{\chi_j}(w_j-W) + F_j + S_j - \Sigma\;\;\text{ in }\;\; (0,T)\times\Omega\,, \\
			(w_j-W)(0) & = w_j^0 - W^0 \;\;\text{ in }\;\; \Omega\,,
		\end{split}\label{bp07}
	\end{equation}
	with $F_j := d \Gamma_{\chi_j}W - D \Delta W$. Since
	\begin{align*}
		\int_\Omega \Gamma_{\chi_j}z(x) \mathrm{sign}(z(x))\,\mathrm{d}x & = \int_{\Omega\times\Omega} \chi_j(x,y) z(y) \mathrm{sign}(z(x))\,\mathrm{d}(x,y) \\
		& \qquad - \int_{\Omega\times\Omega} \chi_j(x,y) |z(x)|\,\mathrm{d}(x,y) \\
		& \le \int_{\Omega\times\Omega} \chi_j(x,y) |z(y)|\,\mathrm{d}(x,y) \\
		& \qquad - \int_{\Omega\times\Omega} \chi_j(y,x) |z(y)|\,\mathrm{d}(x,y) \\ 
		& \le 0
	\end{align*}
	for all $z\in L_1(\Omega)$ due to the symmetry of $\chi_j$, we infer from~\eqref{bp07} that
	\begin{align}
		\sup_{t\in [0,T]} \|w_j(t)-W(t)\|_1 & \le \|w_j^0 - W^0\|_1 + \|F_j + S_j - \Sigma\|_{L_1((0,T)\times\Omega)} \nonumber\\
		& \le \|w_j^0 - w^0\|_1 + \|w^0 - W^0\|_1 + \|F_j\|_{L_1((0,T)\times\Omega)} \nonumber\\
		& \qquad + \|S_j - S\|_{L_1((0,T)\times\Omega)} + \|S - \Sigma\|_{L_1((0,T)\times\Omega)}\,. \label{bp08}
	\end{align}
	
	We now estimate $F_j$. To this end, we proceed as in \cite[Section~3.2.2]{AVMRTM2010} and denote an extension of $W$ to $[0,T]\times\mathbb{R}^n$ by $\mathcal{W}\in C^{1+\alpha/2,2+\alpha}([0,T]\times\mathbb{R}^n)$. Then $F_j = F_{j,1} + F_{j,2}$ with
	\begin{align*}
		F_{j,1}(t,x) & := d \int_{\mathbb{R}^n} \chi_j(x,y) \big[\mathcal{W}(t,y)-\mathcal{W}(t,x) \big]\,\mathrm{d}y - D \Delta \mathcal{W}(t,x)\,,  \\
		F_{j,2}(t,x) & := - d \int_{\mathbb{R}^n\setminus\Omega} \chi_j(x,y) \big[ \mathcal{W}(t,y) - \mathcal{W}(t,x) \big]\,\mathrm{d}y
	\end{align*}
	for $(t,x)\in [0,T]\times\bar{\Omega}$. We first argue as in the proof of \cite[Equation~(3.15)]{AVMRTM2010} and use a Taylor expansion and the radial symmetry of $\varphi$ to deduce that there is a positive constant $B_1(\mathcal{W})$ depending only on $n$, $\Omega$, $\varphi$, $T$, and $\mathcal{W}$ such that
	\begin{equation}
		\|F_{j,1}\|_{L_1((0,T)\times\Omega)} \le B_1(\mathcal{W}) j^{-\alpha}\,. \label{bp09}
	\end{equation}
	Next, if $x\in\Omega_{1/j} := \{ y\in \Omega\ :\ \mathrm{dist}(y,\partial\Omega)<1/j\}$ and $j$ is large enough, then the orthogonal projection $Px$ of $x$ on $\partial\Omega$ is well-defined and it follows from \cite[Lemma~3.14]{AVMRTM2010} that there is a positive constant $B_2(\mathcal{W})$ depending only on $n$, $\Omega$, $\varphi$, $T$, and $\mathcal{W}$ such that, for $t\in [0,T]$,
	\begin{equation}
		\left| F_{j,2}(t,x) + \frac{d}{2} \int_{\mathbb{R}^n\setminus\Omega} \chi_j(x,y) \big\langle D^2\mathcal{W}(t,Px) (y-Px) , (x-Px) \big\rangle\,\mathrm{d}y \right| \le B_2(\mathcal{W}) j^{-\alpha}\,. \label{bp10}
	\end{equation}
	Now, $|x-Px|\le 1/j$, 
	\begin{align*}
		& \mathrm{supp}\,\chi_j(x,\cdot)\subset \mathcal{K} := \{y\in \mathbb{R}^n\ :\ \mathrm{dist}(y,\Omega)\le 1\}\,, \\
		& |y-Px| \le |y-x| + |x-Px| \le 2/j \;\;\text{ for }\;\; y\in\mathrm{supp}\,\chi_j(x,\cdot)\,,
	\end{align*}
	so that
	\begin{align*}
		& \left| \frac{d}{2} \int_{\mathbb{R}^n\setminus\Omega} \chi_j(x,y) \big\langle D^2\mathcal{W}(t,Px) (y-Px) , (x-Px) \big\rangle\,\mathrm{d}y \right| \\ 
		& \qquad \le \frac{d \|D^2\mathcal{W}(t)\|_{L_\infty(\mathcal{K})}}{j^2} \int_{\mathbb{R}^n\setminus\Omega} \chi_j(x,y)\,\mathrm{d}y \\
		& \qquad \le j^n d \|D^2\mathcal{W}\|_{L_\infty([0,T]\times\mathcal{K})} \int_{\mathbb{R}^n} \varphi(j(x-y))\,\mathrm{d}y \\
		& \qquad \le d \|D^2\mathcal{W}\|_{L_\infty([0,T]\times\mathcal{K})} \,.
	\end{align*}
	Recalling~\eqref{bp10}, we have shown that there is a positive constant $B_3(\mathcal{W})$ depending only on $n$, $\Omega$, $\varphi$, $T$, and $\mathcal{W}$ such that
	\begin{equation}
		F_{j,2}(t,x) \le B_3(\mathcal{W})\,, \qquad (t,x)\in [0,T]\times \Omega_{1/j}\,. \label{bp11}
	\end{equation}
	Finally, if $x\in\Omega\setminus\Omega_{1/j}$, then $|x-y|\ge 1/j$ for all $y\in\mathbb{R}^n\setminus\Omega$. Thus, due to \eqref{d1000},
	\begin{equation}
		F_{j,2}(t,x) =0\,, \qquad (t,x)\in [0,T]\times \big( \Omega\setminus\Omega_{1/j} \big)\,. \label{bp12}
	\end{equation}
	Collecting~\eqref{bp06}, \eqref{bp08}, \eqref{bp09}, \eqref{bp11}, and~\eqref{bp12}, we find
	\begin{align*}
		\sup_{t\in [0,T]} \|w_j(t)-w(t)\|_1 & \le 2 \|W^0-w^0\|_1 + 2 \|S-\Sigma\|_{L_1((0,T)\times\Omega)} + \|w_j^0 - w^0\|_1 \\
		& \qquad + B_1(\mathcal{W}) j^{-\alpha} + T B_3(\mathcal{W}) |\Omega_{1/j}| + \|S_j - S\|_{L_1((0,T)\times\Omega)} \,. 
	\end{align*}
	Hence,
	\begin{align*}
		\|w_j-w\|_{L_1((0,T)\times\Omega)} & \le 2T \|W^0-w^0\|_1 + 2T \|S-\Sigma\|_{L_1((0,T)\times\Omega)} + T \|w_j^0 - w^0\|_1 \\
		& \qquad + T B_1(\mathcal{W}) j^{-\alpha} + T^2 B_3(\mathcal{W}) |\Omega_{1/j}| + T \|S_j - S\|_{L_1((0,T)\times\Omega)} \,. 
	\end{align*}
	We then let $j\to\infty$ in the above inequality and deduce from~\eqref{bp01}, \eqref{bp03}, and the definition of $\Omega_{1/j}$ that
	\begin{equation*}
		\|w_\infty-w\|_{L_1((0,T)\times\Omega)} \le 2T \|W^0-w^0\|_1 + 2T \|S-\Sigma\|_{L_1((0,T)\times\Omega)} \,.
	\end{equation*}
	Since the above inequality is valid for all $(\Sigma,W^0)\in C_0^\infty((0,T)\times \Omega)\times C_0^\infty(\Omega)$, we now use the density of $C_0^\infty((0,T)\times \Omega)$ in $L_1((0,T)\times\Omega)$, along with that of $C_0^\infty(\Omega)$ in $L_1(\Omega)$, to complete the proof. 
\end{proof}

\bibliographystyle{siam}
\bibliography{GrayScott}

\end{document}